\documentclass{imanum2}

\usepackage{subcaption}
\usepackage{pgfplots}

\newcommand{\SlopeTriangle}[6]
{

    \pgfplotsextra
    {
        \pgfkeysgetvalue{/pgfplots/xmin}{\xmin}
        \pgfkeysgetvalue{/pgfplots/xmax}{\xmax}
        \pgfkeysgetvalue{/pgfplots/ymin}{\ymin}
        \pgfkeysgetvalue{/pgfplots/ymax}{\ymax}

        \pgfmathsetmacro{\xArel}{#1}
        \pgfmathsetmacro{\yArel}{#3}
        \pgfmathsetmacro{\xBrel}{#1-#2}
        \pgfmathsetmacro{\yBrel}{\yArel}
        \pgfmathsetmacro{\xCrel}{\xArel}

        \pgfmathsetmacro{\lnxB}{\xmin*(1-(#1-#2))+\xmax*(#1-#2)} 
        \pgfmathsetmacro{\lnxA}{\xmin*(1-#1)+\xmax*#1} 
        \pgfmathsetmacro{\lnyA}{\ymin*(1-#3)+\ymax*#3} 
        \pgfmathsetmacro{\lnyC}{\lnyA+#4*(\lnxA-\lnxB)}
        \pgfmathsetmacro{\yCrel}{\lnyC-\ymin)/(\ymax-\ymin)} 

        \coordinate (A) at (rel axis cs:\xArel,\yArel);
        \coordinate (B) at (rel axis cs:\xBrel,\yBrel);
        \coordinate (C) at (rel axis cs:\xCrel,\yCrel);

        \draw[#6]   (A)-- node[anchor=north] {#5}
                    (B)--
                    (C)--
                    cycle;
    }
}

\numberwithin{figure}{section}

\jno{drnxxx}

\usepackage{hyperref}
\definecolor{myblue}{rgb}{0,0,0.5}
\definecolor{myred}{rgb}{0.6,0,0}
\hypersetup{colorlinks=true,linkcolor=myblue,citecolor=myblue,filecolor=myblue,urlcolor=myblue}

\usepackage{mathrsfs}

\renewcommand{\Re}{\operatorname{Re}}

\newcommand{\Cquad}{\gamma_{\rm q}}
\newcommand{\Cstab}{\gamma_{\rm s}^{*}}

\newcommand{\Caprx}{\gamma_{\rm a}^*}
\newcommand{\Caprxt}{\widetilde{\gamma}_{\rm a}^*}
\newcommand{\Caprxop}{\gamma_{\rm a}^+}
\newcommand{\Cdrhs}{\widetilde{\gamma}_{\rm q}}

\newcommand{\MI}{\textup{I}_d}

\newcommand{\Ccontt}{\Ccont^+}
\newcommand{\newb}{b^+}
\newcommand{\tsol}{\mathcal{S}^+}
\newcommand{\tpi}{\pi^+}

\newcommand{\Ccont}{M}
\newcommand{\Ccoer}{C_{\rm coer}}

\newcommand{\asol}{\mathcal S^*}

\newcommand{\eq}{:=}

\newcommand{\Rea}{\mathbb{R}}

\newcommand{\beq}{\begin{equation}}
\newcommand{\eeq}{\end{equation}}
\newcommand{\beqs}{\begin{equation*}}
\newcommand{\eeqs}{\end{equation*}}

\newcommand{\bpf}{\begin{proof}}
\newcommand{\epf}{\end{proof}}

\newcommand{\tfa}{\text{ for all }}
\newcommand{\tfor}{\text{ for }}

\newcommand{\tif}{\text{ if }}
\newcommand{\tin}{\text{ in }}
\newcommand{\ton}{\text{ on }}

\newcommand{\tand}{\text{ and }}

\newcommand{\PMLsol}{u}

\newcommand{\tb}{\widetilde b}

\newcommand{\tu}{\widetilde u}
\newcommand{\tv}{\widetilde v}
\newcommand{\tw}{\widetilde w}

\newcommand{\tV}{\widetilde V}
\newcommand{\tCI}{\widetilde \CI}

\newcommand{\GD}{\Gamma_{\rm D}}
\newcommand{\GN}{\Gamma_{\rm N}}

\newcommand{\MAP}{\mathcal F}
\newcommand{\IMAP}{\mathcal G}
\newcommand{\CT}{\mathcal T}
\newcommand{\CP}{\mathcal P}
\newcommand{\CI}{\mathcal I}
\newcommand{\CJ}{\mathcal J}
\newcommand{\tK}{\widetilde K}
\newcommand{\hK}{\widehat   K}

\newcommand{\Omegah}{\Omega^h}
\newcommand{\GDh}{\Gamma_{\rm D}^h}
\newcommand{\GNh}{\Gamma_{\rm N}^h}

\newcommand{\Part}{{\mathfrak{Q}}}
\newcommand{\Parth}{\Part^h}

\newcommand{\hv}{\widehat v}
\newcommand{\hx}{\widehat x}

\newcommand*{\N}[1]{\left\|#1\right\|}

\newcommand{\operator}{\mathcal{K}}
\newcommand{\cH}{\mathcal{H}}

\newcommand{\Omegat}{\Omega_{\rm tot}}

\newcommand{\datai}{f}
\newcommand{\data}{g}

\newcommand{\InformalTheorem}{Main Result}
\newtheoremstyle{informaltheorem}{6pt}{6pt}{\rm}{}{\sffamily}{ }{ }{}
\theoremstyle{informaltheorem}
\newtheorem{informaltheorem}{\sc Main Result}[section]

\begin{document}

\title{The geometric error is less than the pollution error when solving the high-frequency Helmholtz equation with high-order FEM on curved domains}

\author{T. Chaumont-Frelet\thanks{Inria Univ.~Lille and Laboratoire Paul Painlev\'e, 59655 Villeneuve-d'Ascq, France, {\tt theophile.chaumont@inria.fr}},
\,\,E. A. Spence\thanks{Department of Mathematical Sciences, University of Bath, Bath, BA2 7AY, UK, \tt E.A.Spence@bath.ac.uk}
}

\shortauthorlist{T. Chaumont-Frelet and E.A. Spence}

\maketitle

\begin{abstract}
{
We consider the $h$-version of the finite-element method, 
where accuracy is increased by decreasing the meshwidth $h$ while keeping the polynomial degree $p$ constant,
applied to the Helmholtz equation.
Although the question ``how quickly must $h$ decrease as the wavenumber $k$ increases to maintain accuracy?''~has been studied intensively since the 1990s, 
none of the existing rigorous wavenumber-explicit analyses take into account the approximation of the geometry. 
In this paper we prove that for nontrapping problems solved using straight elements the geometric error is order $kh$, which is then less than the pollution error $k(kh)^{2p}$ when $k$ is large; this fact is then illustrated in numerical experiments. More generally, we prove that, even for problems with strong trapping, using degree four (in 2-d) or degree five (in 3-d) polynomials and isoparametric elements ensures that the geometric error is smaller than the pollution error for most large wavenumbers.
}
{
Helmholtz equation, time-harmonic scattering, high-frequency problems,
curved finite elements, geometric error, duality techniques, error estimates
}
\end{abstract}

\section{Introduction:~informal statement of the main result}\label{sec:intro}

The results of this paper apply to general Helmholtz problems (interior problems, exterior problems, wave-guide problems), but for concreteness here 
we consider the Helmholtz equation 
\beq\label{eq:Helmholtz}
-k^2 \mu u  -\nabla \cdot (A\nabla u ) =\datai,
\eeq
with $k\gg 1$, 
posed in a bounded domain $\Omega$ corresponding to the exterior of an impenetrable obstacle where the Sommerfeld radiation condition has been approximated by a radial perfectly-matched layer (PML). For simplicity, we assume that $\Omega$ has characteristic length scale $\sim 1$. 
The solution $u$ satisfies either a zero Dirichlet or zero Neumann boundary condition on the part of $\partial \Omega$ corresponding to the impenetrable obstacle, and a zero Dirichlet boundary condition on the part of $\partial\Omega$ corresponding to the PML truncation boundary. 
The piecewise-smooth coefficients $\mu$ and $A$ describe the PML near the truncation boundary, and variation of $\mu$ and $A$ in the rest of the domain corresponds to penetrable obstacles. 

We assume that the solution operator to \eqref{eq:Helmholtz} is bounded by $k^{\alpha-1}$ for some $\alpha>0$, i.e., given $k_0>0$ there exists $C>0$ such that given $\datai\in L^2(\Omega)$, the solution  $u\in H^1(\Omega)$  to \eqref{eq:Helmholtz} satisfies
\beq\label{eq:alpha}
\N{u}_{H^1_k(\Omega)}\leq C k^{\alpha-1} \N{\datai}_{L^2(\Omega)} \text{ for all } k\geq k_0,
\eeq
where 
\beq\label{eq_norm}
\N{u}_{H^1_k(\Omega)}^2:= \N{\nabla u}_{L^2(\Omega)}^2 + k^2 \N{u}^2_{L^2(\Omega)}.
\eeq
Recall that the bound 
\eqref{eq:alpha} holds with $\alpha=1$ 
when the problem is \emph{nontrapping} (i.e., all geometric-optic rays starting in a neighbourhood of the scatterer escape to infinity in a uniform time); indeed 
in this case the solution of the exterior Helmholtz problem satisfies \eqref{eq:alpha} with $\alpha=1$ by \cite{Mo:75, Va:75}, 
and the solution of the associated radial PML problem inherits this bound by \cite[Lemma 3.3]{GLS2}.

\begin{informaltheorem}[Informal statement of the main result]\label{thm:informal1}
Suppose that the Helmholtz problem above is solved using the $h$-version of the finite-element method with shape-regular meshes with meshwidth $h$, fixed (but arbitrary) polynomial degree $p$, and fixed order of the geometric approximation $q$ (so that $q=1$ for straight elements and $q=p$ for isoparametric elements). 

Suppose further that 
the domain $\Omega$ and coefficients $A$ and $\mu$
satisfy the natural regularity assumptions for using degree $p$ polynomials, and that the data $\datai$ in \eqref{eq:Helmholtz} 
comes from an incident plane wave or point source (or, more generally, a particular Helmholtz solution that is smooth in a neighbourhood of the scatterer).
If 
\beq\label{eq:condition1}
k^{\alpha}(kh)^{2p}
+ k^{\alpha} h^q \,\,\text{ is sufficiently small, }
\eeq
then the Galerkin solution $u_h$ exists, is unique, and satisfies
\beq\label{eq:informal1}
\frac{
\N{u-u_h}_{H^1_k(\Omega)}
}{
\N{u}_{H^1_k(\Omega)}
}
\leq C \bigg[ 
 \Big( 1 +  k^{\alpha}(kh)^p\Big) (kh)^p + k^{\alpha} h^q
\bigg].
\eeq
\end{informaltheorem}

We make eight remarks about the interpretation and context of \InformalTheorem \ref{thm:informal1}.
\begin{enumerate}
\item 
Near $\partial \Omega$, the error $\|u-u_h\|_{H^1_k(\Omega)}$ in \eqref{eq:informal1} is measured on the subset of $\Omega$ where both $u$ and $u_h$ are well-defined. When $A$ and $\mu$ are discontinuous, defining precisely where the error $\|u-u_h\|_{H^1_k(\Omega)}$ is measured is more involved; see Theorem \ref{thm_main_applied} below.

\item In the limit $h\to 0$ with $k$ fixed, the bound \eqref{eq:informal1} shows that the Galerkin error is $O(h^{\min\{p,q\}})$; this is expected, at least in the isoparametric case when $p=q$, by \cite{CiRa:72, lenoir_1986a}.

\item When $q=\infty$ (i.e., there is no geometric error), the condition \eqref{eq:condition1} becomes 
\beqs
k^\alpha(kh)^{2p} \,\,\text{ is sufficiently small},
\eeqs
and then \eqref{eq:informal1} implies that choosing $k^\alpha(kh)^{2p}$ to be a sufficiently-small constant maintains accuracy of the Galerkin solution as $k\to \infty$. 
This result was
\begin{itemize}
\item proved for constant-coefficient problems in 1-d by \cite{IhBa:97} (see \cite[Page 350, penultimate displayed equation]{IhBa:97}, \cite[Equation 4.7.41]{Ih:98}), 
\item 
obtained in the context of dispersion analysis by \cite{Ai:04}, 
\item proved for 2- and 3-d constant-coefficient Helmholtz problems
with the radiation condition approximated by an impedance boundary condition
by \cite{DuWu:15} (following \cite{FeWu:09, FeWu:11, Wu:14, ZhWu:13})
and for variable-coefficient problems in \cite[Theorem 2.39]{Pe:20}, and then 
\item proved for general 2- and 3-d Helmholtz problems with truncation either by a PML, or the exact Dirichlet-to-Neumann map, or an impedance boundary condition
 by \cite{GaSp:23}.
\end{itemize}
We note that bounds under the condition ``$k^{\alpha} (kh)^p$ sufficiently small" were proved for several specific Helmholtz problems by  \cite{MeSa:10, MeSa:11},
and then for general Helmholtz problems by  \cite{ChNi:20}.

\item When the problem is nontrapping, $\alpha=1$, and 
$k h^q  \ll 1$ when $k(kh)^{2p}$ is constant, regardless of $q$. Therefore, 
\InformalTheorem \ref{thm:informal1} implies that, regardless of $q$,
choosing $k^\alpha(kh)^{2p}$ to be a sufficiently-small constant maintains accuracy of the Galerkin solution as $k\to \infty$. That is, even with straight elements ($q=1$),
in the limit $k\to \infty$ with $h$ chosen to control the pollution error, 
the geometric error is smaller than the pollution error; this feature is illustrated in the numerical experiments in \S\ref{sec:num}.

\item When the problem is trapping, $\alpha >1$, and it is not immediate that $k^\alpha h^q  \ll 1$
when $k^\alpha(kh)^{2p}$ is constant. For Helmholtz problems with strong trapping, the solution
operator grows exponentially through an increasing sequence of wavenumbers, with this sequence
becoming increasingly dense as $k$ increases; see \cite{PoVo:99}, \cite{CaPo:02} and
\cite{BeChGrLaLi:11}. However, if a set of wavenumbers of arbitrarily-small measure is
excluded then \eqref{eq:alpha} holds for any $\alpha > 5d/2 + 2$ by \cite{LSW1} and
\cite[Lemma 3.3]{GLS2} (in fact, \cite{LSW1} proves this result for the original exterior Helmholtz
problem, and the PML problem inherits this bound by \cite[Lemma 3.3]{GLS2}). With isoparametric
elements $q=p$, and then $h^q k \ll 1$ when $k^\alpha(kh)^{2p}$ is constant
if $p\geq 4$ in 2-d and $p\geq 5$ in 3-d. That is, for moderate polynomial degree,
isoparametric elements, and most (large) wavenumbers, the geometric error is smaller than
the pollution error, even for problems with strong trapping. 

\item To prove \InformalTheorem \ref{thm:informal1}, we extend the duality argument recently introduced
for general Helmholtz problems in \cite{GaSp:23} to incorporate a Strang-lemma-type argument to
allow variational crimes; see Theorem \ref{thm_abs1} below. We then use the results of \cite{lenoir_1986a} to apply this abstract
result to the case of geometric error induced by polynomial element maps.

\item
\InformalTheorem \ref{thm:informal1} is conceptually similar to the results of \cite{ChNi:18, ChNi:23}. Indeed, \cite{ChNi:18, ChNi:23} 
show that, when solving 2-d or 3-d Helmholtz problems on domains with corners/conical points using a uniform mesh, the error from the corner singularity is smaller than the pollution error (for $k$ sufficiently large). 
In other words, although \emph{both} the geometric error \emph{and} the error incurred by corner singularities are important in the limit $h\to 0$ with $k$ fixed, 
in the limit $k\to \infty$ with $h$ chosen to control the pollution error, 
both of these errors are smaller than the pollution error.

\item
Finally, we note that 
the geometric error in  \InformalTheorem \ref{thm:informal1} can be understand as 
a (suitably-scaled) norm of solution operator ($k^\alpha$) multiplied by the magnitude of the perturbation ($h^q$).
The arguments in this paper therefore do not take into account any of the propagative properties of solutions of the Helmholtz equation. Using these properties, 
one might hope to show that any variation in the geometric that is ``subwavelength'', i.e.., $\ll k^{-1}$, is not ``seen'' by either the solution or the FEM solution, in which case the geometric error would always be smaller than the pollution error (regardless of the norm of the solution operator or the polynomial degree). 
Investigating this question is work in progress, but far beyond the scope of the present paper.

\end{enumerate}

\paragraph{Outline of the paper.}

\S\ref{sec:num} gives numerical experiments illustrating \InformalTheorem \ref{thm:informal1} for two 2-d nontrapping problems ($\alpha=1$) and straight elements ($q=1$).
\S\ref{sec:abstract} contains an abstract analogue of \InformalTheorem \ref{thm:informal1} proved under the assumptions that the sesquilinear form is continuous and satisifies a G\aa rding-type inequality. \S\ref{sec:all_here} shows that a wide variety of Helmholtz problems fix into the abstract setting of \S\ref{sec:abstract} and proves \InformalTheorem \ref{thm:informal1}.

\section{Numerical experiments with straight elements in 2-d}\label{sec:num}

In this section we demonstrate numerically for two 2-d scattering problems the consequence of \InformalTheorem \ref{thm:informal1} discussed in Point 4 above; i.e., that,
for nontrapping problems solved using straight elements, 
in the limit $k\to \infty$ with $h$ chosen to control the pollution error, 
the geometric error is smaller than the pollution error.

\subsection{The two scattering problems considered.}

\begin{definition}[Sound-soft scattering by a 2-d ball]
Given $u_{\rm inc}(x) := e^{i kx_1}$, 
let $u_{\rm tot}$ satisfy 
\beqs
-k^2 u_{\rm tot}-\Delta u_{\rm tot} = 0 \quad \tin \Rea^d \setminus B_1(0), \quad u_{\rm tot} = 0 \quad\ton \partial B_1(0),
\eeqs
where $u_{\rm sca}:= u_{\rm tot} - u_{\rm inc}$ satisfies the Sommerfeld radiation condition
\beq\label{eq:src}
\left(\frac{\partial}{\partial r} - ik\right)u_{\rm sca} = o(r^{-(d-1)/2}) \quad\text{ as } r:=|x|\to \infty, \text{ uniformly in } x/r.
\eeq
\end{definition}

\begin{definition}[Scattering by a penetrable 2-d ball]
Given $u_{\rm inc}(x) := e^{i kx_1}$ and 
\beq\label{eq_trans_coeff}
A:= 
\begin{cases} 
2 & \tin B_1(0),\\
1 & \tin \Rea^d \setminus B_1(0),
\end{cases}
\quad
\tand
\quad
\mu:= 
\begin{cases} 
1/2 & \tin B_1(0),\\
1 & \tin \Rea^d \setminus B_1(0),
\end{cases}
\eeq
let $u_{\rm tot}$ satisfy 
\beq\label{eq_transmission_PDE}
-k^2\mu u_{\rm tot}-\nabla \cdot (A\nabla u_{\rm tot}) = 0 \quad \tin \Rea^d,
\eeq
and 
\beq\label{eq_transmission}
2 \partial_r u^+_{\rm tot} = \partial_r u^-_{\rm tot} \quad\tand \quad u^+_{\rm tot} = u^-_{\rm tot} \quad\ton \partial B_1(0),
\eeq
where the subscript $+$ denotes the limit taken with $r>1$ and the subscript $-$ denotes the limit taken with $r<1$, 
and where $u_{\rm sca}:= u_{\rm tot} - u_{\rm inc}$ satisfies the Sommerfeld radiation condition \eqref{eq:src}.
\end{definition}
(Note that the transmission conditions in \eqref{eq_transmission} follow from the variational formulation of the PDE \eqref{eq_transmission_PDE}.)

The choice of the coefficients $A$ and $\mu$ in \eqref{eq_trans_coeff} implies that this problem is nontrapping, in the sense that outgoing solutions to \eqref{eq:Helmholtz}, with  $A$ and $\mu$ given by \eqref{eq_trans_coeff}, satisfy the bound \eqref{eq:alpha} with $\alpha=1$; see \cite{MoSp:19}.

The reason for choosing these two scattering problems is that, in both cases,  $u_{\rm tot}$ can be expressed explicitly as a Fourier series in the angular variable, with the Fourier coefficients expressed in terms of Hankel and Bessel functions. 

Although it is possible to directly compute $u_{\rm tot}$ with the FEM,
it is more convenient to compute 
\beqs
u:= \chi u_{\rm inc} + u_{\rm scat}
\eeqs
for $\chi\in C^\infty_{\rm comp}(\Rea^d)$ and $\chi \equiv1$ in a neighbourhood of $B_1(0)$;
this is because $u$ then satisfies (i) the same boundary/transmission conditions as $u_{\rm tot}$
on $\partial B_1(0)$ and (ii) the Sommerfeld radiation condition (satisfied by $u_{\rm sca}$)
at infinity. Once $u$ is computed, $u_{\rm tot}$ and $u_{\rm sca}$ can easily be 
recovered since $\chi$ and $u_{\rm inc}$ are known.

We then use a PML to approximate the radiation condition satisfied by $u$. Following \cite{collino_monk_1998a}, we let
\beq\label{eq_mu_PML}
\mu_{\rm PML}(x) := \mathfrak B(r_{\rm P})\beta(r_{\rm P})
\eeq
and
\begin{equation}\label{eq_A_PML}
A_{\rm PML}(x)
=
\frac{\mathfrak B(|x|)}{\beta(|x|)}
\left (
\begin{array}{cc}
\cos^2 \theta & \cos\theta\sin\theta)
\\
\cos\theta\sin\theta & \sin^2\theta
\end{array}
\right )
+
\frac{\beta(|x|)}{\mathfrak B(|x|)}
\left (
\begin{array}{cc}
\sin^2 \theta & -\cos\theta\sin\theta
\\
-\cos\theta\sin\theta & \cos^2\theta
\end{array}
\right ),
\end{equation}
where 
$\theta$ is such that $x = |x|(\cos\theta,\sin\theta)$, and
\begin{equation*}
\beta(r)
:=
\begin{cases}
1+ i \alpha (r-4)^\ell, & r\geq 4,\\
1, & 0\leq r< 4,
\end{cases}
\quad
\tand
\quad
\mathfrak B(r)
:=
\begin{cases}
1+ i\frac{\alpha}{(\ell+1) r} (r-4)^{\ell+1}, & r \geq 4,\\
1, & 0\leq r<4,
\end{cases}
\end{equation*}
with $\alpha = 10$ and $\ell=2$ (note that our $\beta$ and $\mathfrak{B}$ correspond to $\gamma$ and $\overline{\gamma}$ in \cite{collino_monk_1998a}).

We therefore approximate with the FEM the solutions of the following two problems.

\begin{definition}
[PML approximation to sound-soft scattering by a 2-d ball]
\label{def_PML_approx_ss}
Let $\Omega:= B_5(0)\setminus \overline{B_1(0)}.$
Let 
\beqs
A:= 
\begin{cases} 
1 & \tin B_4(0)\setminus \overline{B_1(0)},\\
A_{\rm PML} & \tin B_5(0) \setminus B_4(0),
\end{cases}
\quad
\tand
\quad
\mu:= 
\begin{cases} 
1& \tin B_4(0)\setminus \overline{B_1(0)},\\
\mu_{\rm PML} & \tin B_5(0) \setminus B_4(0),
\end{cases}
\eeqs
where $A_{\rm PML}$ and $\mu_{\rm PML}$ are given by \eqref{eq_A_PML} and \eqref{eq_mu_PML} respectively.
Given $\chi \in C^\infty_{\rm comp}(\Rea^d)$ satisfying 
\beq\label{eq_chi}
\chi \equiv 1 \ton B_1(0)\quad\tand\quad \chi \equiv 0\ton B_5(0)\setminus B_4(0),
\eeq
let $u\in H^1_0(\Omega)$ satisfy 
\begin{equation}\label{eq_new_PDE}
-k^2 \mu u - \nabla \cdot(A\nabla u) = \datai
\end{equation}
with
\begin{equation}\label{eq_new_rhs}
\datai
:=
-(\Delta \chi) u_{\rm inc}-2\nabla \chi \cdot \nabla u_{\rm inc}
=
(-\Delta \chi-2ik\partial_1\chi) e^{ikx_1}.
\end{equation}
\end{definition}

\begin{definition}
[PML approximation to scattering by a penetrable 2-d ball]
\label{def_PML_approx_pen}
Let $\Omega:= B_5(0).$
Let 
\beqs
A:= 
\begin{cases} 
2 & \tin B_1(0),\\
1 & \tin B_4(0)\setminus B_1(0),\\
A_{\rm PML} & \tin B_5(0) \setminus B_4(0)
\end{cases}
\quad
\tand
\quad
\mu:= 
\begin{cases} 
1/2 & \tin B_1(0),\\
1 & \tin B_4(0)\setminus B_1(0),\\
\mu_{\rm PML} & \tin B_5(0) \setminus B_4(0)
\end{cases}
\eeqs
where $A_{\rm PML}$ and $\mu_{\rm PML}$ are given by \eqref{eq_A_PML} and \eqref{eq_mu_PML} respectively.
Given $\chi \in C^\infty_{\rm comp}(\Rea^d)$ satisfying \eqref{eq_chi}, 
let $u\in H^1_0(\Omega)$ satisfy \eqref{eq_new_PDE}, with $\datai$ given by \eqref{eq_new_rhs}, and the transmission conditions 
\beqs
2 \partial_r u^+ = \partial_r u^-\quad\tand \quad u^+ = u^-\quad\ton \partial B_1(0).
\eeqs
\end{definition}

For simplicity, in the numerical experiments below, we actually use the cutoff
\begin{equation*}
\chi(x) := \frac{1}{2} \left (1-\operatorname{erf}\left (\frac{|x|-3}{\sigma}\right )\right ),
\end{equation*}
with $\sigma = 0.2$ and
\begin{equation*}
\operatorname{erf}(t)
:=
\frac{2}{\sqrt{\pi}}
\int_0^t e^{-s^2} ds.
\end{equation*}
This $\chi$ does not satisfy \eqref{eq_chi};
however,
$|1-\chi(x)| < 10^{-12}$ if $|x| \leq 2$ and $|\chi(x)| < 10^{-12}$ if $|x| \geq 4$,
so that the error incurred by this inconsistency is so small that it does not
affect our examples.

\subsection{The finite-element solution and error measurement}

For simplicity, we only consider the error in the region where $\chi \equiv 1$, so that  $u \equiv u_{\rm tot}$, i.e., 
\beqs
\Omega_{\rm tot}:= 
\begin{cases}
B_2(0) \setminus B_1(0) & \text{ for the sound-soft ball},\\
B_2(0) & \text{ for the penetrable ball.}
\end{cases}
\eeqs
Crucially, $\Omega_{\rm tot}$ includes the interface $\partial B_1(0)$
where the geometric approximation takes place.

As a proxy for $u_{\rm tot}$ we use the Fourier-series solution; this incurs an error due to the PML approximating the radiation condition, but this error is very small, especially for large wavenumbers; see \cite{GLS2}. 

We consider finite element discretizations of degree $p=2,3$ or $4$ and straight
meshes. 
We use {\tt gmsh} (see \cite{geuzaine_remacle_2009a}) to generate the meshes.
{\tt gmsh} produces nodally conforming meshes (meaning that interfaces are
not crossed by any edge) of specified maximal size $h_{\rm target}$. Once such
a mesh has been produced, we compute the actual mesh size $h$ by measuring the
length of all mesh edges. This value of $h$ is the one plotted in the left-hand
sides of the figures below.
The integrals in the sesquilinear form are approximated as described in Remark \ref{rem:quadrature} below (see \eqref{eq:hot2}).

The (squared) error incured by the FEM is measured with the expression
\begin{equation*}
\sum_{\substack{K \in \CT_h \\ K \cap \Omega_{\rm tot} \neq \emptyset}}
\|\widetilde u_{\rm tot}-u_h\|_{H^1(K)}^2,
\end{equation*}
where $\widetilde u_{\rm tot}$ is defined from $u_{\rm tot}$ by extension.
Specifically, for the sound-soft ball, we extend $u_{\rm tot}$
into $B_1(0)$ via the expression for $u_{\rm tot}$ as a Fourier series (which can be evaluated at points in $B_1(0)$). On the other hand, for the penetrable ball,
if $K \in \CT_h$ is an element with barycenter $x_K \in B_1(0)$, $\widetilde u_{\rm tot}|_K$
is defined using the Fourier series of $u_{\rm tot}$ inside $B_1(0)$, whereas 
if $x_K \in B_2(0) \setminus B_1(0)$ then $\widetilde u_{\rm tot}|_K$ is defined using 
the Fourier series of $u_{\rm tot}$ outside $B_1(0)$.

We consider an increasing sequence of wavenumbers ranging from $0.5 \cdot 2\pi$
to $20 \cdot 2\pi$, so that the number of wavelengths in the total field
region where the error is measured ranges from $2$ to $80$. Fixing a polynomial
degree $p$ we then solve the problem for all wavenumber with increasingly refined
meshes. Specifically, we ask {\tt gmsh} to mesh the domain at size $h_{\rm target}$,
where $k^{2p+1}h_{\rm target}^{2p} = C$, and $C$ is chosen so that the relative
error is about 1-2\% for large wavenumbers.

\subsection{Discussion of the numerical results}

\begin{figure}
{
\begin{minipage}{.45\linewidth}
\begin{tikzpicture}
\begin{axis}
[
	width=\linewidth,
	xlabel = {$k$},
	ylabel = {$h$},
	ymode = log,
	xmode = log,
]

\plot table[x expr=2*3.1415*\thisrow{f},y=hmax] {figures/dirichlet/P2/curve.txt};

\plot[domain=3:100] {2*x^(-5./4)};

\SlopeTriangle{0.55}{-0.15}{0.2}{-5./4}{$k^{-5/4}$}{}

\end{axis}
\end{tikzpicture}
\end{minipage}
\begin{minipage}{.45\linewidth}
\begin{tikzpicture}
\begin{axis}
[
	width=\linewidth,
	xlabel = {$k$},
	ylabel = {$\|u-u_h\|_{H^1_k(\Omegat)}/\|u\|_{H^1_k(\Omegat)}$},
	ymode=log,
	xmin=0,
	xmax=130,
	ymin=0.005,
	ymax=0.5
]

\plot table[x expr=2*3.1415*\thisrow{f},y expr=\thisrow{err}/\thisrow{nor}] {figures/dirichlet/P2/curve.txt};

\end{axis}
\end{tikzpicture}
\end{minipage}
\subcaption{Numerical example: sound-soft scattering with $p=2$}
\label{figure_dirichlet_P2}
}
{
\begin{minipage}{.45\linewidth}
\begin{tikzpicture}
\begin{axis}
[
	width=\linewidth,
	xlabel = {$k$},
	ylabel = {$h$},
	ymode = log,
	xmode = log,
]

\plot table[x expr=2*3.1415*\thisrow{f},y=hmax] {figures/dirichlet/P3/curve.txt};

\plot[domain=3:100] {3*x^(-7./6)};

\SlopeTriangle{0.55}{-0.15}{0.2}{-7./6}{$k^{-7/6}$}{}

\end{axis}
\end{tikzpicture}
\end{minipage}
\begin{minipage}{.45\linewidth}
\begin{tikzpicture}
\begin{axis}
[
	width=\linewidth,
	xlabel = {$k$},
	ylabel = {$\|u-u_h\|_{H^1_k(\Omegat)}/\|u\|_{H^1_k(\Omegat)}$},
	ymode=log,
	xmin=0,
	xmax=130,
	ymin=0.005,
	ymax=0.5
]

\plot table[x expr=2*3.1415*\thisrow{f},y expr=\thisrow{err}/\thisrow{nor}] {figures/dirichlet/P3/curve.txt};

\end{axis}
\end{tikzpicture}
\end{minipage}
\subcaption{Numerical example: sound-soft scattering with $p=3$}
\label{figure_dirichlet_P3}
}
{
\begin{minipage}{.45\linewidth}
\begin{tikzpicture}
\begin{axis}
[
	width=\linewidth,
	xlabel = {$k$},
	ylabel = {$h$},
	ymode = log,
	xmode = log,
]

\plot table[x expr=2*3.1415*\thisrow{f},y=hmax] {figures/dirichlet/P4/curve.txt};

\plot[domain=15:100] {6*x^(-9./8)};

\SlopeTriangle{0.5}{-0.15}{0.2}{-9./8}{$k^{-9/8}$}{}

\end{axis}
\end{tikzpicture}
\end{minipage}
\begin{minipage}{.45\linewidth}
\begin{tikzpicture}
\begin{axis}
[
	width=\linewidth,
	xlabel = {$k$},
	ylabel = {$\|u-u_h\|_{H^1_k(\Omegat)}/\|u\|_{H^1_k(\Omegat)}$},
	ymode=log,
	xmin=0,
	xmax=130,
	ymin=0.005,
	ymax=0.5
]

\plot table[x expr=2*3.1415*\thisrow{f},y expr=\thisrow{err}/\thisrow{nor}] {figures/dirichlet/P4/curve.txt};

\end{axis}
\end{tikzpicture}
\end{minipage}
\subcaption{Numerical example: sound-soft scattering with $p=4$}
\label{figure_dirichlet_P4}
}
\caption{Numerical example:~sound-soft scattering}
\end{figure}
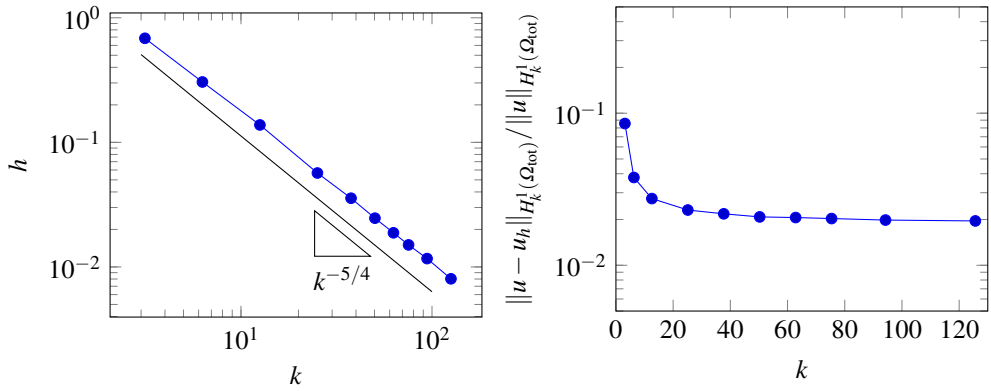
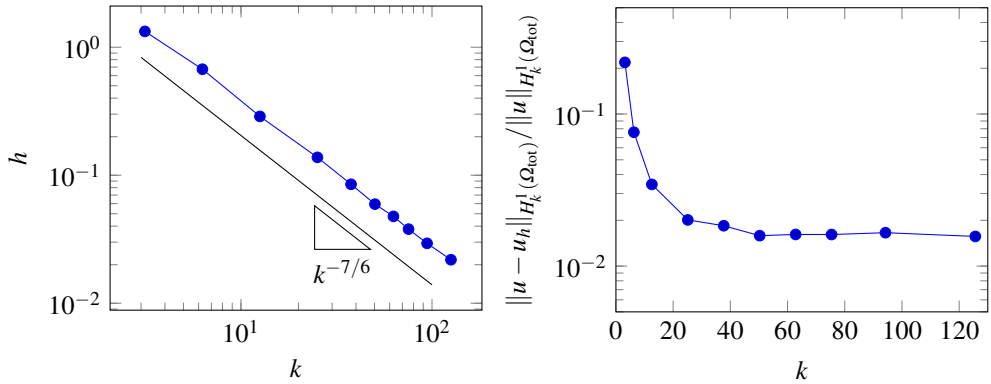
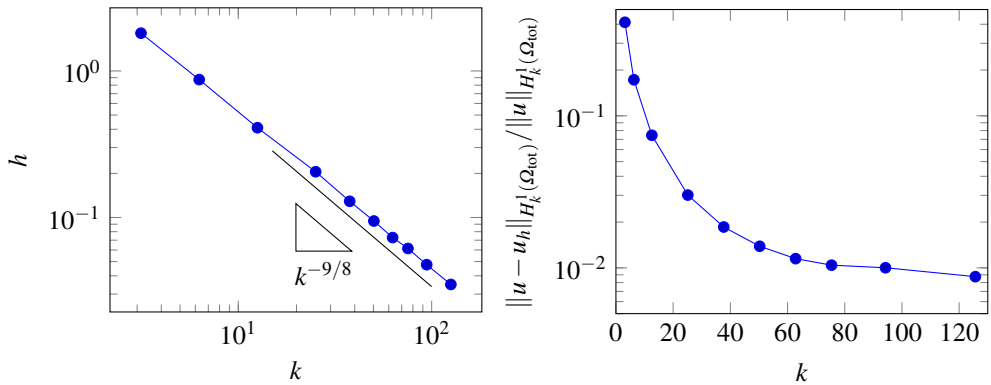

\begin{figure}[h!]
{
\begin{minipage}{.45\linewidth}
\begin{tikzpicture}
\begin{axis}
[
	width=\linewidth,
	xlabel = {$k$},
	ylabel = {$h$},
	ymode = log,
	xmode = log,
]

\plot table[x expr=2*3.1415*\thisrow{f},y=hmax] {figures/transmission/P2/curve.txt};

\plot[domain=3:100] {2*x^(-5./4)};

\SlopeTriangle{0.55}{-0.15}{0.2}{-5./4}{$k^{-5/4}$}{}

\end{axis}
\end{tikzpicture}
\end{minipage}
\begin{minipage}{.45\linewidth}
\begin{tikzpicture}
\begin{axis}
[
	width=\linewidth,
	xlabel = {$k$},
	ylabel = {$\|u-u_h\|_{H^1_k(\Omegat)}/\|u\|_{H^1_k(\Omegat)}$},
	ymode=log,
	xmin=0,
	xmax=130,
	ymin=0.005,
	ymax=0.5
]

\plot table[x expr=2*3.1415*\thisrow{f},y expr=\thisrow{err}/\thisrow{nor}] {figures/transmission/P2/curve.txt};

\end{axis}
\end{tikzpicture}
\end{minipage}
\subcaption{Numerical example: penetrable obstacle with $p=2$}
\label{figure_transmission_P2}
}
{
\begin{minipage}{.45\linewidth}
\begin{tikzpicture}
\begin{axis}
[
	width=\linewidth,
	xlabel = {$k$},
	ylabel = {$h$},
	ymode = log,
	xmode = log,
]

\plot table[x expr=2*3.1415*\thisrow{f},y=hmax] {figures/transmission/P3/curve.txt};

\plot[domain=3:100] {3*x^(-7./6)};

\SlopeTriangle{0.55}{-0.15}{0.2}{-7./6}{$k^{-7/6}$}{}

\end{axis}
\end{tikzpicture}
\end{minipage}
\begin{minipage}{.45\linewidth}
\begin{tikzpicture}
\begin{axis}
[
	width=\linewidth,
	xlabel = {$k$},
	ylabel = {$\|u-u_h\|_{H^1_k(\Omegat)}/\|u\|_{H^1_k(\Omegat)}$},
	ymode=log,
	xmin=0,
	xmax=130,
	ymin=0.005,
	ymax=0.5
]

\plot table[x expr=2*3.1415*\thisrow{f},y expr=\thisrow{err}/\thisrow{nor}] {figures/transmission/P3/curve.txt};

\end{axis}
\end{tikzpicture}
\end{minipage}
\subcaption{Numerical example: penetrable obstacle with $p=3$}
\label{figure_transmission_P3}
}
{
\begin{minipage}{.45\linewidth}
\begin{tikzpicture}
\begin{axis}
[
	width=\linewidth,
	xlabel = {$k$},
	ylabel = {$h$},
	ymode = log,
	xmode = log,
]

\plot table[x expr=2*3.1415*\thisrow{f},y=hmax] {figures/transmission/P4/curve.txt};

\plot[domain=15:100] {6*x^(-9./8)};

\SlopeTriangle{0.5}{-0.15}{0.2}{-9./8}{$k^{-9/8}$}{}

\end{axis}
\end{tikzpicture}
\end{minipage}
\begin{minipage}{.45\linewidth}
\begin{tikzpicture}
\begin{axis}
[
	width=\linewidth,
	xlabel = {$k$},
	ylabel = {$\|u-u_h\|_{H^1_k(\Omega)}/\|u\|_{H^1_k(\Omega)}$},
	ymode=log,
	xmin=0,
	xmax=130,
	ymin=0.005,
	ymax=0.5
]

\plot table[x expr=2*3.1415*\thisrow{f},y expr=\thisrow{err}/\thisrow{nor}] {figures/transmission/P4/curve.txt};

\end{axis}
\end{tikzpicture}
\end{minipage}
\subcaption{Numerical example: penetrable obstacle with $p=4$}
\label{figure_transmission_P4}
}
\caption{Numerical example:~penetrable obstacle}
\end{figure}
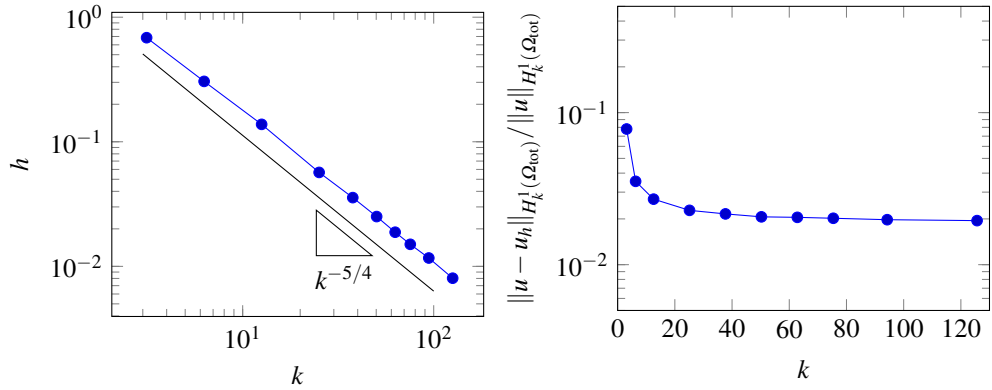
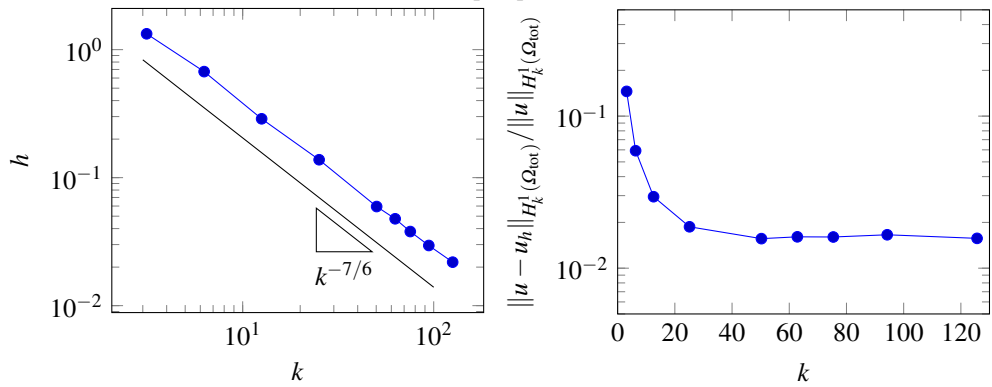
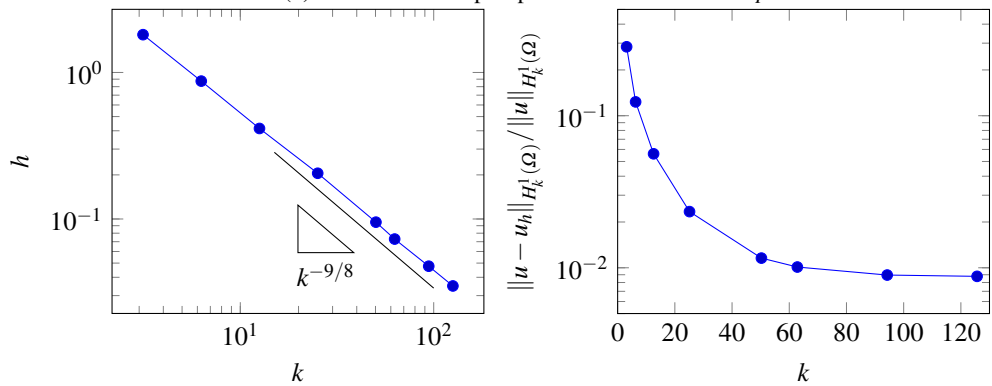

Figures \ref{figure_dirichlet_P2}, \ref{figure_dirichlet_P3}
and \ref{figure_dirichlet_P4} present the results for $p=2$, $3$
and $4$, respectively, for the PML approximation to the sound-soft scattering problem.

Figures \ref{figure_transmission_P2}, \ref{figure_transmission_P3}
and \ref{figure_transmission_P4} present the results for $p=2$, $3$
and $4$, respectively, for the PML approximation to the penetrable-obstacle scattering problem.

As described above, the left-hand sides of the figures plot $h$ as a function of $k$,
illustrating that {\tt gmsh} actually produces a mesh with the desired maximal meshwidth. 
The right-hand sides of the figures plot the relative Galerkin error in $\Omegat$. 

As anticipated from \InformalTheorem \ref{thm:informal1}, the relative error remains uniformly bounded
for all wavenumbers in all cases. We also see that, especially for
higher polynomial degree, the error is larger for the first few wavenumbers
and then stabilizes to a roughly constant value. This is because for the
lowest wavenumbers, the geometric approximation error is not negligible,
and in fact bigger than the pollution error:~indeed, for $k$ fixed, the pollution error on the right-hand side of \eqref{eq:informal1} is $O(h^p)$ whereas the geometric error is $O(h)$ (since $q=1$). When the  wavenumber increases,
the pollution error (which remains constant thanks to our choice of $h$), becomes dominant, as predicted by \InformalTheorem \ref{thm:informal1}.

\section{Strang-type lemma for abstract Helmholtz problems}\label{sec:abstract}

\subsection{Assumptions on the sesquilinear form and finite-dimensional subspace}\label{sec_assumptions}

\subsubsection{Continuity and G\aa rding-type inequality.}

Let $\mathcal{H} \subset H^1(\Omega)$ be a Hilbert space with the norm $\|\cdot\|_{H^1_k(\Omega)}$ \eqref{eq_norm}. (In \S\ref{sec:all_here}, $\mathcal{H}$ is $H^1(\Omega)$ with zero Dirichlet boundary conditions on part of $\partial \Omega$; see \eqref{eq:H1GD} below).

Let $b:\cH\times \cH\to \mathbb{C}$ be a  sesquilinear form that is continuous, i.e., 
\begin{equation}
\label{eq_definition_Ccont}
\sup_{\substack{v \in \mathcal{H} \\ \|v\|_{H^1_k(\Omega)} = 1}}
\sup_{\substack{w \in \mathcal{H} \\ \|w\|_{H^1_k(\Omega)} = 1}}
|b(v,w)| =:\Ccont <\infty
\end{equation}
and satisfies the G\aa rding-type inequality
\begin{equation}
\label{eq_Garding}
\Ccoer \|v\|_{H^1_k(\Omega)}^2
\leq
\Re b(v,v) +  2k^2 \|\operator v\|_{L^2(\Omega)}^2 \quad\tfa v\in \cH,
\end{equation}
for $\Ccoer > 0$ and for some self-adjoint operator $\operator: L^2(\Omega) \to L^2(\Omega)$.

\begin{remark}[The relationship between \eqref{eq_Garding} and the standard G\aa rding inequality]
If $b$ satisfies \eqref{eq_Garding} then $b$ also satisfies a ``standard'' G\aa rding inequality, since 
\begin{equation}
\label{eq_Garding_standard}
\Ccoer \|v\|_{H^1_k(\Omega)}^2
\leq
\Re b(v,v) +  2 k^2 \|\operator\|^2_{L^2(\Omega)\to L^2(\Omega)}\| v\|_{L^2(\Omega)}^2
\end{equation}
for all $v\in \cH$. 
We show in Lemma \ref{lem_GS} below 
(following \cite{GaSp:23}) that if $b$ satisfies a standard G\aa rding inequality along with elliptic regularity, then $b$ satisfies \eqref{eq_Garding} with $\operator$ a smoothing operator; the smoothing property of $\operator$ is then key to obtaining the high-order convergence in the main result (\InformalTheorem \ref{thm:informal1}).
\end{remark}

\subsubsection{The approximate sesquilinear form $b_h$ and approximate right-hand side $\psi_h$.}

Given a finite-dimensional subspace $V_h \subset \mathcal{H}$,
we consider a sesquilinear form $b_h: V_h \times V_h \to \mathbb C$
that is ``close'' to $b$, with this ``closeness" 
characterised by the quantity
\begin{equation}
\label{eq_definition_Cquad}
\Cquad
\eq
\frac{1}{\Ccont}
\sup_{\substack{v_h \in V_h \\ \|v_h\|_{H^1_k(\Omega)} = 1}}
\sup_{\substack{w_h \in V_h \\ \|w_h\|_{H^1_k(\Omega)} = 1}}
|b(v_h,w_h)-b_h(v_h,w_h)|.
\end{equation}
Similarly, given $\psi \in (H^1_k(\Omega))^*$, let $\psi_h: V_h \to \mathbb{C}$ be ``close'' to $\psi$ in the sense that
\begin{equation}\label{eq_definition_Cdrhs}
\Cdrhs := \frac{1}{\Ccont\|\psi\|_{(H^1_k(\Omega))^*}}
\sup_{\substack{v_h \in V_h \\ \|v_h\|_{H^1_k(\Omega)} = 1}}
|\langle \psi - \psi_h,v_h \rangle|.
\end{equation}

\subsubsection{Uniqueness of the solution of the variational problem.}

We assume that the solution of the following variational problem, if it exists, is unique:~given $\psi \in (H^1_k(\Omega))^*$, 
find $v \in \mathcal{H}$ such that
\beq\label{eq_vpb2}
b(v,w) = \langle \psi, w\rangle \quad\tfa w\in \mathcal{H}.
\eeq
The G\aa rding-type inequality \eqref{eq_Garding} implies that $b$ satisfies the standard G\aa rding inequality \eqref{eq_Garding_standard}.
Uniqueness of the solution of \eqref{eq_vpb2} is then equivalent to existence by, e.g., \cite[Theorem 2.32]{Mc:00}, so that 
\eqref{eq_vpb2} has a unique solution $v \in \mathcal{H}$.

\subsubsection{Notation for the adjoint solution operator and adjoint approximability.}

Define the \emph{adjoint solution operator} $\asol: L^2(\Omega)\to \cH$ as the solution of the variational problem
\beq\label{eq_asol}
b\big( v, \asol \phi\big) = (v,\phi)_{L^2(\Omega)} \quad\tfa v \in \mathcal{H}, \,\,\phi \in L^2(\Omega)
\eeq
(the fact that the solution to \eqref{eq_vpb2} is unique implies that $\asol$ is well-defined by, e.g., \cite[Theorem 2.27]{Mc:00}).
We then define
\begin{equation}\label{eq_definition_Cstab}
\Cstab
\eq k \N{\asol \operator}_{L^2(\Omega)\to H^1_k(\Omega)};
\end{equation}
i.e,. $\Cstab$ is the norm of the solution operator of the adjoint problem with a $\operator$ on the right-hand side.
The reason for including the factor of $k$ in \eqref{eq_definition_Cstab} is that $\Cstab$ is then dimensionless.

Denote the orthogonal projector to $V_h$ in $\|{\cdot}\|_{H^1_k(\Omega)}$ by
\begin{equation}
\label{eq_definition_pih}
\pi_h w \eq \arg \min_{w_h \in V_h} \|w-w_h\|_{H^1_k(\Omega)} \quad\tfa w\in \cH.
\end{equation}
The following quantity measures how well solutions of the adjoint problem with a $\operator$ on the right-hand side are approximated in $V_h$:
\begin{equation}
\label{eq_definition_Caprx}
\Caprx
\eq
k\N{ (I-\pi_h)\asol \operator}_{L^2(\Omega)\to H^1_k(\Omega)};
\end{equation}
similar to with $\Cstab$, the factor of $k$ in \eqref{eq_definition_Caprx} means that $\Caprx$ is dimensionless.

\subsubsection{The modified sesquilinear form and its solution operator.}
Define 
\beq\label{eq_btilde}
\newb (v,w) := b(v,w) + 2k^2\big( \operator^2 v, w\big)_{L^2(\Omega)},
\eeq
so that $\newb $ is coercive on $\cH$ by \eqref{eq_Garding} (with coercivity constant $\Ccoer$).
Let 
\begin{equation}\label{eq_tb_cty}
\sup_{\substack{v \in \mathcal{H} \\ \|v\|_{H^1_k(\Omega)} = 1}}
\sup_{\substack{w \in \mathcal{H} \\ \|w\|_{H^1_k(\Omega)} = 1}}
\big|\newb (v,w)\big| =:\Ccontt \leq \Ccont + 2\|\operator^2\|_{L^2(\Omega)\to L^2(\Omega)}.
\end{equation}
Let 
$\tsol:L^2(\Omega)\to \cH$ be defined as the solution of the variational problem
\beq\label{eq_tsol}
\newb \big( \tsol \phi,w\big) = (\phi,w)_{L^2(\Omega)} \quad\tfa w \in \mathcal{H}, \,\,\phi\in L^2(\Omega).
\eeq
Similar to \eqref{eq_definition_Caprx},
\beq\label{eq_definition_Caprxop}
\Caprxop
:=
k \big\|(I-\pi_h)\tsol \operator\big\|_{L^2(\Omega)\to H^1_k(\Omega)}
\eeq
measures how well solutions of the variational problem involving $\newb $ and with a $\operator$ on the right-hand side are approximated in $V_h$.

\subsection{The main abstract result}

The following result is an abstract version of the elliptic-projection argument from \cite{GaSp:23}, extended to allow a variational crime. 
This result involves the quantities $\Ccont, \Ccoer, \Cquad,\Cdrhs, \Cstab,\Caprx, \Ccontt, \Caprxop$ and the operator $\operator$, all defined in \S\ref{sec_assumptions}.

\begin{theorem}[Abstract elliptic-projection-type argument with variational crime]
\label{thm_abs1}
Under the assumptions in \S\ref{sec_assumptions}, 
let $u\in \cH$ be the unique solution of 
the variational problem
\beq\label{eq_vpb}
b(\PMLsol,w) = \langle \psi, w\rangle \quad\tfa w\in \mathcal{H}.
\eeq
Let 
\beqs
A:= 1 - 2\frac{(\Ccontt)^2}{\Ccoer}  
\Caprx \Caprxop
\eeqs
and 
\beqs
B:= 
A\big( \Ccoer - \Ccont \Cquad\big) - \sqrt{\frac{2}{\Ccoer}}\Ccont \Ccontt 
\Cstab \Cquad.
\eeqs
If $B>0$, then there exists a unique $u_h \in V_h$ such that
\beq\label{eq_vpbh}
b_h(\PMLsol_h,w_h) = \langle \psi_h, w_h\rangle \quad\tfa w_h \in V_h,
\eeq
and the error $u-u_h$ satisfies the bound 
\begin{align}\nonumber
B\N{u-u_h}_{H^1_k(\Omega)} \leq &\bigg[
\Ccont + \sqrt{\frac{2}{\Ccoer}}(\Ccontt)^2 \Caprx
\bigg]
\N{(I- \pi_h) u}_{H^1_k(\Omega)} \\ 
&\qquad
+ \bigg[
\Ccont + \sqrt{\frac{2}{\Ccoer}}\Ccont \Ccontt\Cstab
\bigg]
\Big( \Cquad \N{u}_{H^1_k(\Omega)} + \Cdrhs\N{\psi}_{(H^1_k(\Omega))^*}
\Big).\label{eq:abs_bound1}
\end{align}
\end{theorem}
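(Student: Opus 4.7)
The plan is to adapt the elliptic-projection duality argument of \cite{GaSp:23} so as to accommodate the variational crimes encoded by $\Cquad$ and $\Cdrhs$. First, I would introduce the \emph{elliptic projection} $\widetilde{\pi}_h u \in V_h$, defined by $\newb(u - \widetilde{\pi}_h u, w_h) = 0$ for all $w_h \in V_h$. Since $\newb$ is continuous by \eqref{eq_tb_cty} and coercive by \eqref{eq_Garding}, this is well posed via Lax--Milgram, and Cea's lemma yields $\N{u - \widetilde{\pi}_h u}_{H^1_k(\Omega)} \leq (\Ccontt/\Ccoer)\N{u - \pi_h u}_{H^1_k(\Omega)}$. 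Writing $e_h := \widetilde{\pi}_h u - u_h \in V_h$, the triangle inequality reduces matters to bounding $\N{e_h}_{H^1_k(\Omega)}$.

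Subtracting \eqref{eq_vpbh} from \eqref{eq_vpb} produces the perturbed Galerkin orthogonality $b(u - u_h, w_h) = \langle \psi - \psi_h, w_h\rangle - (b - b_h)(u_h, w_h)$ for all $w_h \in V_h$. Combining the Gårding-type inequality \eqref{eq_Garding} on $e_h$ with the elliptic-projection identity, I get $\Ccoer \N{e_h}^2_{H^1_k(\Omega)} \leq \Re \newb(e_h, e_h) = \Re \newb(u - u_h, e_h)$; expanding $\newb = b + 2k^2(\operator^2 \cdot, \cdot)_{L^2(\Omega)}$ and using self-adjointness of $\operator$ turns the right-hand side into
\[
\Re\bigl[\langle \psi - \psi_h, e_h\rangle - (b - b_h)(u_h, e_h) + 2k^2 (\operator(u - u_h), \operator e_h)_{L^2(\Omega)}\bigr].
\]
The first two terms are bounded directly via continuity using \eqref{eq_definition_Cquad} and \eqref{eq_definition_Cdrhs}; the remaining term is the target of two duality arguments.

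To control $\N{\operator(u - u_h)}_{L^2(\Omega)}$, I would test against an arbitrary $\phi \in L^2(\Omega)$: self-adjointness of $\operator$ and the definition of $\asol$ give $(\operator(u - u_h), \phi)_{L^2(\Omega)} = b(u - u_h, \asol \operator \phi)$, and splitting $\asol\operator\phi$ via $\pi_h$, combined with the perturbed Galerkin orthogonality on the $V_h$-piece, produces a bound in which the complement is controlled by $\Caprx \N{\operator}_{L^2(\Omega)\to L^2(\Omega)}$ and the projection by $\Cstab \N{\operator}_{L^2(\Omega)\to L^2(\Omega)}$ times the data and crime quantities. For $\N{\operator e_h}_{L^2(\Omega)}$ I would rewrite $\N{\operator e_h}^2_{L^2(\Omega)}$ as an $\newb$-pairing involving $\tsol \operator$ acting on $\operator e_h$, split by $\pi_h$, and invoke the approximability $\Caprxop$ from \eqref{eq_definition_Caprxop}; this produces $\N{\operator e_h}_{L^2(\Omega)} \lesssim (\Caprxop/k)\N{e_h}_{H^1_k(\Omega)}$ plus variational-crime terms of lower order.

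Substituting both duality bounds back, together with $\N{u - u_h} \leq \N{u - \widetilde{\pi}_h u} + \N{e_h}$ and $\N{u_h} \leq \N{u} + \N{u - u_h}$, yields a linear inequality in $\N{e_h}_{H^1_k(\Omega)}$; collecting the $\N{e_h}$-terms on the left produces exactly the coefficient $B$, while $A$ arises from the product of the two duality smoothings (yielding the cross term $\Caprx \Caprxop \N{\operator}_{L^2(\Omega)\to L^2(\Omega)}$). Provided $B > 0$, the absorption succeeds, the Cea bound on $\N{u - \widetilde{\pi}_h u}$ converts the remainder into $\N{(I - \pi_h)u}_{H^1_k(\Omega)}$, and \eqref{eq:abs_bound1} follows; existence and uniqueness of $u_h$ are obtained by applying the estimate to the homogeneous problem ($\psi = \psi_h = 0$, $u = 0$), which forces $u_h = 0$ and hence shows that the square Galerkin system has trivial kernel. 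The main technical obstacle will be the second duality, for $\N{\operator e_h}_{L^2(\Omega)}$: because $\newb$ is not generally Hermitian the dualisation must be organised so that $e_h$ lands in the slot where the relation $\newb(e_h, w_h) = \langle \psi - \psi_h, w_h\rangle - (b - b_h)(u_h, w_h) + 2k^2(\operator(u - u_h), \operator w_h)_{L^2(\Omega)}$ applies, and the last cross term couples $\N{\operator e_h}$ back to $\N{\operator(u - u_h)}$, requiring careful bookkeeping to obtain the product $\Caprx\Caprxop$ (rather than $\Caprx^2$ or $\Caprxop^2$) in the definition of $A$.
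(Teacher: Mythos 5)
Your overall skeleton (G\aa rding inequality plus perturbed Galerkin orthogonality, a duality argument with $\asol$ producing $\Caprx$ and $\Cstab$, and existence/uniqueness of $u_h$ via the homogeneous problem and finite dimensionality) is sound, and your first duality bound for $k\N{\operator(u-u_h)}_{L^2(\Omega)}$ is fine. The genuine gap is the second duality: the claimed bound $k\N{\operator e_h}_{L^2(\Omega)}\lesssim \Caprxop\N{e_h}_{H^1_k(\Omega)}$ ``plus variational-crime terms of lower order'' for the discrete remainder $e_h=\widetilde\pi_h u-u_h\in V_h$ is not justified and is false as a statement about discrete functions: $\operator$ is (in the Helmholtz application) essentially a spectral projector onto frequencies $\lesssim k$, so $k\N{\operator v_h}_{L^2}\sim\N{v_h}_{H^1_k}$ for low-frequency $v_h\in V_h$, with no $(kh)^p$ gain. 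The superconvergence encoded by $\Caprxop$ \eqref{eq_definition_Caprxop} is a property of functions that are $\newb$-orthogonal to $V_h$ — in the paper it is applied to $\operator(I-\tpi_h)v$ via \eqref{eq_Gogt} in Lemma \ref{lem:abs2} — not of members of $V_h$. The only structure your $e_h$ has is the perturbed equation $\newb(e_h,w_h)=\langle\psi-\psi_h,w_h\rangle-(b-b_h)(u_h,w_h)+2k^2(\operator(u-u_h),\operator w_h)_{L^2(\Omega)}$, and if you dualise ($\N{\operator e_h}^2_{L^2}=\newb(e_h,\Zspace)$ with $\newb(v,\Zspace)=(v,\operator^2e_h)_{L^2}$, split $\Zspace$ by $\pi_h$ and use this equation on the discrete piece), the term $2k^2(\operator(u-u_h),\operator\pi_h\Zspace)_{L^2}$ carries no smallness: it returns $k\N{\operator(u-u_h)}_{L^2}\,\N{\operator e_h}_{L^2}$ with an $O(1)$ constant. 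Feeding this back through your first duality ($k\N{\operator(u-u_h)}_{L^2}\lesssim\Caprx\N{u-u_h}_{H^1_k}+\Cstab\cdot\text{crime}$) produces, after absorption, a term of size $\Caprx^2\N{e_h}^2_{H^1_k}$ rather than $\Caprx\Caprxop\N{e_h}^2_{H^1_k}$, so the condition you would need is essentially ``$\Caprx$ sufficiently small'' — the classical Schatz-type condition the theorem is designed to avoid — and the stated constants $A$ and $B$ (hence the headline condition \eqref{eq:condition1}) are not recovered. A secondary mismatch: with $e_h$ in the first slot of $\newb$, the dual problem is governed by the adjoint of $\tsol$, so even the good part of your estimate would involve $k\N{(I-\pi_h)(\tsol)^{*}\operator}$ rather than the $\Caprxop$ of \eqref{eq_definition_Caprxop}; since $\newb$ is not Hermitian (complex PML coefficients), these differ.

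For comparison, the paper never splits off a discrete remainder. It applies the G\aa rding inequality to the full error $e=u-u_h$, uses the elementary quadratic inequality (Lemma \ref{lemma_quadratic}) to reach $(\Ccoer-\Ccont\Cquad)\N{e}\lesssim\Ccont\N{(I-\pi_h)u}+\sqrt{2}\,k\N{\operator e}_{L^2}+\text{crime}$, and then runs a \emph{single} duality for $k\N{\operator e}_{L^2}$ in which the dual solution $\asol\operator^2 e$ is split with the $\newb$-elliptic projection $\tpi_h$ acting in the second slot. The exact orthogonality \eqref{eq_Gogt} then replaces $e$ by $(I-\pi_h)u$ in the leading term (so $\Caprx$ multiplies the best-approximation error, not $\N{e}$), and the self-coupling term appears as $2k^2(\operator e,\operator(I-\tpi_h)\asol\operator^2e)_{L^2}$, i.e.\ with $\operator$ acting on an $(I-\tpi_h)$-function; Lemma \ref{lem:abs2} then supplies the extra factor $\Caprxop$, giving the product $\Caprx\Caprxop$ in $A$. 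If you want to salvage your elliptic-projection decomposition, you would have to replace your second duality by an argument of this type, i.e.\ ensure that every occurrence of $\operator$ acting on the error is paired with an $(I-\tpi_h)$-type factor before invoking $\Caprxop$.
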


We make three immediate remarks.
\begin{enumerate} 
\item The bound \eqref{eq:abs_bound1} has a similar structure to the first Strang lemma for coercive problems, namely the Galerkin error is bounded by the sum of (i) the best approximation error, (ii) the error in the sesquilinear form, and (iii) the error in the right-hand side;
see \cite{St:72a}, \cite[Theorem 26.1]{Ci:91}, \cite[Theorem 4.1.1]{Ci:02}.
\item 
If $\Cquad,\Cdrhs=0$ and the approximation factors $\Caprx,\Caprxop\to0$, then $A\to 1$, $B\to \Ccoer$, and the bound \eqref{eq:abs_bound1} approaches 
\beqs
\N{u-u_h}_{H^1_k(\Omega)}\leq \frac{\Ccont}{\Ccoer}\N{(I- \pi_h) u}_{H^1_k(\Omega)};
\eeqs
observe that this is the bound given by C\'ea's lemma for a coercive problem with continuity constant $\Ccont$ and coercivity constant $\Ccoer$.
\item
Theorem \ref{thm_abs1} is, in fact, a slight refinement of the arguments of \cite{GaSp:23}, in that both $\Cstab$ \eqref{eq_definition_Cstab} and the ``adjoint approximation factor" $\Caprx$ \eqref{eq_definition_Caprx} involve $\asol$ applied to $\operator$ 
 (which we see in Lemma \ref{lem_GS} below is a smoothing operator )
whereas in the arguments of  \cite{GaSp:23} $\asol$ does not only appear acting on $\operator$. In particular, the arguments of \cite{GaSp:23} use the more-standard adjoint approximation factor 
$k\N{ (I-\pi_h)\asol}_{L^2(\Omega)\to H^1_k(\Omega)}$ introduced in \cite{Sa:06} coming out of the work of \cite{Sc:74}. 
For a discussion of how the ideas behind Theorem \ref{thm_abs1}/the results of \cite{GaSp:23} are related to previous duality arguments in the literature, see \cite[Remark 2.2]{GaSp:23}.
\end{enumerate}

\subsection{Outline of how Theorem \ref{thm_abs1} gives \InformalTheorem \ref{thm:informal1}. }
Assuming that $\Ccont,\Ccontt$, and $\Ccoer$ are independent of $k$ (as we see below they are for Helmholtz problems), we see that 
Theorem \ref{thm_abs1} shows that if 
\beq\label{eq_simple0}
\Caprx 
\Caprxop
 \quad\tand\quad \Cstab \Cquad \quad \text{ are sufficiently small }
\eeq
then
\beq\label{eq_simple1}
\N{u-u_h}_{H^1_k(\Omega)}\leq C (1 + \Caprx) \N{(I- \pi_h) u}_{H^1_k(\Omega)} + C (1+ \Cstab)\Big( \Cquad \N{u}_{H^1_k(\Omega)} + \Cdrhs\N{\psi}_{(H^1_k(\Omega))^*}\Big).
\eeq
Continuity of $b$ \eqref{eq_definition_Ccont} implies that $\N{\psi}_{(H^1_k(\Omega))^*}\leq \Ccont \N{u}_{H^1_k(\Omega)}$, and thus \eqref{eq_simple1} implies that 
\beq\label{eq_simple2}
\N{u-u_h}_{H^1_k(\Omega)}\leq C (1 + \Caprx) \N{(I- \pi_h) u}_{H^1_k(\Omega)} + C (1+ \Cstab)\big( \Cquad + \Cdrhs\big)\N{u}_{H^1_k(\Omega)}.
\eeq

Section \ref{sec:all_here} shows how 
for the particular Helmholtz problem considered in Section \ref{sec:intro}, \eqref{eq_simple0} and \eqref{eq_simple2} become \eqref{eq:condition1} and \eqref{eq:informal1}, 
respectively; indeed, this is via the bounds 
$\Caprx\leq C k^\alpha(kh)^p$, $
\Caprxop
\leq C (kh)^p$, 
$\Cstab\leq C k^{\alpha}$,
and $\Cquad, \Cdrhs \leq C h^q$ (for a domain $\Omega$ with characteristic length scale $\sim 1$).

\subsection{Proof of Theorem \ref{thm_abs1}}

Theorem \ref{thm_abs1} comes from combining Lemmas \ref{lem:abs1} and \ref{lem:abs2} below.
These lemmas involve the following projection operator; let $\tpi_h: \cH \to V_h$ be defined by 
\beqs
\newb \big(v_h , \tpi_h w \big)  = \newb  (v_h, w) \quad \text{ for all } v_h \in V_h, w \in \cH;
\eeqs
i.e., 
\beq\label{eq_Gogt}
\newb \big(v_h , (I-\tpi_h) w \big)  = 0\quad \text{ for all } v_h \in V_h, w \in \cH.
\eeq
We immediately note that, since $\newb $ is continuous and coercive, 
C\'ea's lemma implies that
\beq\label{eq_tpi_pi}
\big\| (I- \tpi_h )v\big\|_{H^1_k(\Omega)} \leq \frac{\Ccontt}{\Ccoer} \big\| (I- \pi_h)v \big\|_{ H^1_k(\Omega)} \tfa v \in H^1_k(\Omega),
\eeq
and 
\beq\label{eq_norm_tpi}
\big\| \tpi_h v\big\|_{H^1_k(\Omega)} \leq \frac{\Ccontt}{\Ccoer} \N{v}_{H^1_k(\Omega)} \quad \tfa v \in \cH,
\eeq
since 
\beqs
\Ccoer \N{\tpi_h v}^2_{H^1_k(\Omega)} \leq \newb \big( \tpi_h v, \tpi_h v\big) = \newb \big( \tpi_h v,  v\big) \leq \Ccontt \big\| \tpi_h v\|_{H^1_k(\Omega)} \N{v}_{H^1_k(\Omega)}.
\eeqs

In both the statement and proof of the next lemma, we drop the $(\Omega)$ in norms,
i.e., we write, e.g., $\|\operator\|_{L^2\to L^2}$ instead of
$\|\operator \|_{L^2(\Omega)\to L^2(\Omega)}$ to keep expressions compact.

\begin{lemma}\label{lem:abs1}
Let 
\beq\label{eq_definition_Caprxt}
\Caprxt:= k^2 \big\| \operator (I- \tpi_h) \asol \operator\big\|_{L^2\to L^2}.
\eeq
If the solution $u_h$ to \eqref{eq_vpbh} exists, then 
\begin{align}\nonumber
&\bigg[ 
\big( 1- 2\Caprxt 
\big) \Big( \Ccoer -\Ccont \Cquad\Big) - \sqrt{\frac{2}{\Ccoer}}\Ccont \Ccontt 
\Cstab \Cquad
\bigg]
\N{u-u_h}_{H^1_k(\Omega)}
\\ \nonumber
&
\leq 
\bigg[
\big( 1- 2\Caprxt 
\big)\Ccont + \sqrt{\frac{2}{\Ccoer}}(\Ccontt)^2 \Caprx 
\bigg]
\N{(I-\pi_h)u}_{H^1_k(\Omega)} 
\\
&
\qquad+ \bigg[
\big( 1- 2\Caprxt 
\big)\Ccont +\sqrt{\frac{2}{\Ccoer}}\Ccont \Ccontt 
\Cstab 
\bigg] 
\Big( \Cquad \N{u}_{H^1_k(\Omega)} + \Cdrhs\N{\psi}_{(H^1_k(\Omega))^*}\Big).\label{eq:abs_bound2}
\end{align}
\end{lemma}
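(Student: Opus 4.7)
The plan is to combine a G\aa rding-type estimate for $e := u - u_h$ with a duality bound for $\|\operator e\|_{L^2(\Omega)}$, both modified to absorb the variational-crime terms $\Cquad$ and $\Cdrhs$. Throughout, two projectors play a role: $\pi_h$, the $\|\cdot\|_{H^1_k(\Omega)}$-orthogonal projector appearing in the final statement via \eqref{eq_definition_pih}, and $\tpi_h$, the elliptic projector for $\newb$ from \eqref{eq_Gogt}, needed to exploit the smoothing carried by $\operator$.

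First I would apply \eqref{eq_Garding} to $e$, giving $\Ccoer \|e\|_{H^1_k(\Omega)}^2 \leq \Re b(e,e) + 2k^2 \|\operator e\|_{L^2(\Omega)}^2$, and then bound the two right-hand terms separately. For $\Re b(e,e)$: subtracting \eqref{eq_vpb} and \eqref{eq_vpbh} yields the Strang-type Galerkin orthogonality $b(e, w_h) = \langle \psi - \psi_h, w_h\rangle + (b_h - b)(u_h, w_h)$ for $w_h \in V_h$; splitting $e = (I-\pi_h)u + \pi_h e$ with $\pi_h e \in V_h$, using continuity of $b$ together with \eqref{eq_definition_Cquad}, \eqref{eq_definition_Cdrhs} and $\|u_h\| \leq \|u\| + \|e\|$, one obtains $|b(e,e)| \leq \Ccont \|e\| \|(I-\pi_h)u\| + \Ccont(\Cdrhs\|\psi\|_* + \Cquad\|u\| + \Cquad\|e\|) \|e\|$, whose $\Ccont\Cquad \|e\|^2$ term migrates to the left and accounts for the $\Ccoer - \Ccont\Cquad$ factor of \eqref{eq:abs_bound2}.

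To bound $\|\operator e\|_{L^2(\Omega)}$, for arbitrary $\phi \in L^2(\Omega)$ I would use self-adjointness of $\operator$ together with \eqref{eq_asol} to write $(\operator e, \phi)_{L^2} = b(e, \asol(\operator \phi))$, and split $\asol(\operator\phi) = \tpi_h \asol(\operator\phi) + (I-\tpi_h)\asol(\operator\phi)$. For the first piece, $\tpi_h \asol(\operator\phi) \in V_h$ so the Galerkin-with-crime identity applies, and combining with \eqref{eq_norm_tpi} and the definition of $\Cstab$ produces the $\Cstab\|\operator\|$-type contribution. For the second piece, \eqref{eq_btilde} gives $b(e,v) = \newb(e,v) - 2k^2 (\operator^2 e, v)_{L^2}$: splitting $e = \eta + e_h$ with $\eta = u - \tpi_h u$ and $e_h = \tpi_h u - u_h \in V_h$, the elliptic-projection identity \eqref{eq_Gogt} kills the $e_h$-contribution to $\newb$, leaving $\newb(\eta, (I-\tpi_h)\asol(\operator\phi))$ which is bounded via \eqref{eq_tb_cty}, \eqref{eq_tpi_pi} and $\Caprx$; the remaining $(\operator^2 e, (I-\tpi_h)\asol(\operator\phi))_{L^2} = (\operator e, \operator(I-\tpi_h)\asol(\operator\phi))_{L^2}$ is controlled by Cauchy--Schwarz and \eqref{eq_definition_Caprxt}, producing a $2\Caprxt\|\operator\|\|\operator e\|\|\phi\|$ term whose absorption to the left yields the $(1 - 2\Caprxt\|\operator\|)$ prefactor.

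To close the loop I would write $2k^2\|\operator e\|^2 = \sqrt{2}k\|\operator e\| \cdot \sqrt{2}k\|\operator e\|$ and bound one factor by continuity of $\operator$ ($\sqrt{2}k\|\operator e\|_{L^2} \leq \sqrt{2}\|\operator\|\|e\|_{H^1_k}$) and the other by the duality estimate just obtained (multiplied through by $\sqrt{2}k$); this is where the $\sqrt{2}$ factors in \eqref{eq:abs_bound2} originate. Substituting back into the G\aa rding bound, using $\|u_h\| \leq \|u\| + \|e\|$ again, converting $\|\eta\| \leq (\Ccontt/\Ccoer)\|(I-\pi_h)u\|$ via \eqref{eq_tpi_pi}, then collecting the $\|e\|^2$ contributions on the left and dividing by $\|e\|_{H^1_k(\Omega)}$, gives \eqref{eq:abs_bound2}. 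The main obstacle will be the careful bookkeeping of the interplay between the two projectors: $\tpi_h$ is essential in the duality step to invoke the $\newb$-orthogonality \eqref{eq_Gogt}, yet the final bound must be stated in $(I-\pi_h)u$, so the conversion via \eqref{eq_tpi_pi} can only be applied at the very end; tracking the $(1-2\Caprxt\|\operator\|)$ factor and the $\sqrt{2}$ constants emerging from the split of $2k^2\|\operator e\|^2$ into a continuity factor and a duality factor is the other delicate piece.
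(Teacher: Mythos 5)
Your overall architecture coincides with the paper's: the G\aa rding-type inequality \eqref{eq_Garding} applied to $e=u-u_h$, the quasi-orthogonality estimate \eqref{eq_quasi_orthogonality} to handle the crime terms $\Cquad,\Cdrhs$, and a duality argument through $\newb$, $\asol$ and the projector $\tpi_h$, with the $2\Caprxt\N{\operator}_{L^2\to L^2}$ contribution absorbed on the left. However, two of your steps do not reproduce the constants that are the actual content of \eqref{eq:abs_bound2} (these constants feed directly into $A$ and $B$ in Theorem \ref{thm_abs1}), so as written the argument proves a variant of the lemma rather than the stated inequality.

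First, your treatment of $2k^2\N{\operator e}_{L^2}^2$: writing it as $\sqrt2 k\N{\operator e}_{L^2}\cdot\sqrt2 k\N{\operator e}_{L^2}$ and bounding one factor by $\sqrt2\N{\operator}_{L^2\to L^2}\N{e}_{H^1_k(\Omega)}$ leaves, after dividing the resulting quadratic inequality by $\N{e}_{H^1_k(\Omega)}$, the additive term $2\N{\operator}_{L^2\to L^2}\,k\N{\operator e}_{L^2}$ rather than $\sqrt2\,k\N{\operator e}_{L^2}$. Consequently every duality-derived contribution --- including the $\Cstab\Cquad$ term that must be absorbed into the left-hand bracket --- acquires an extra factor $\sqrt2\N{\operator}_{L^2\to L^2}$ relative to \eqref{eq:abs_bound2}; your claim that this step is ``where the $\sqrt2$ originates'' is therefore not accurate. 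The paper avoids this loss by applying the elementary quadratic inequality of Lemma \ref{lemma_quadratic} to the inequality $(\Ccoer-\Ccont\Cquad)\N{e}^2_{H^1_k}\leq(\text{data terms})\cdot\N{e}_{H^1_k}+2k^2\N{\operator e}^2_{L^2}$, which produces the clean term $\sqrt2 k\N{\operator e}_{L^2}$ with no $\N{\operator}$ or $\N{e}$ attached. Second, in the duality step you split $e=(I-\tpi_h)u+(\tpi_h u-u_h)$; the orthogonality \eqref{eq_Gogt} does kill the discrete part, but you are then forced to invoke \eqref{eq_tpi_pi} twice (once on $(I-\tpi_h)\asol\operator\phi$ to reach $\Caprx$, and once more to convert $\N{(I-\tpi_h)u}_{H^1_k}$ into $\N{(I-\pi_h)u}_{H^1_k}$), which yields the coefficient $(\Ccontt)^3/\Ccoer^2$ on the $\Caprx\N{(I-\pi_h)u}_{H^1_k}$ term instead of the stated $(\Ccontt)^2/\Ccoer$; the paper instead splits $e=(I-\pi_h)u+(\pi_h u-u_h)$, so \eqref{eq_tpi_pi} is needed only once. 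Since $\Ccoer\leq\Ccontt$ and $\N{\operator}_{L^2\to L^2}$ appears explicitly in the bound, neither discrepancy is cosmetic; both are easily repaired (use Lemma \ref{lemma_quadratic}, and split $e$ with $\pi_h$), but with those repairs your argument becomes exactly the paper's proof.
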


Note that \eqref{eq:abs_bound2} has the same structure as \eqref{eq:abs_bound1}, 
with $1-2\Caprxt$ in \eqref{eq:abs_bound2} replaced by $A$ in \eqref{eq:abs_bound1}. 
The following lemma does this replacement, by bounding $\Caprxt$ by $\Caprxop\Caprx$
(see \eqref{eq_extra1} below). 

\begin{lemma}\label{lem:abs2}
For all $v\in \cH$,
\beqs
k\big\| \operator ( I- \tpi_h) v\big\|_{L^2(\Omega)} \leq \Ccontt 
\Caprxop
 \big\| (I-\tpi_h)v\big\|_{H^1_k(\Omega)}.
\eeqs
\end{lemma}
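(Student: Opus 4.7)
The plan is to estimate $\|\operator(I-\tpi_h)v\|_{L^2}$ by a duality argument, using the operator $\tsol$ together with the Galerkin orthogonality \eqref{eq_Gogt} for $\newb$. Concretely, set $g := \operator(I-\tpi_h)v \in L^2(\Omega)$, and aim to bound $\|g\|_{L^2}^2$.

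First, I would use self-adjointness of $\operator$ to pass one factor of $\operator$ onto the other side, writing
\beqs
\|g\|_{L^2}^2 = \big( \operator (I-\tpi_h)v , g \big)_{L^2} = \big( (I-\tpi_h)v , \operator g\big)_{L^2}.
\eeqs
Next, introduce $z := \tsol(\operator g) \in \cH$; by the defining relation \eqref{eq_tsol} this $z$ satisfies $\newb(z,w) = (\operator g, w)_{L^2}$ for every $w \in \cH$. Taking $w = (I-\tpi_h)v$ (and taking absolute values to sidestep any conjugation issue) yields
\beqs
\|g\|_{L^2}^2 = \big| \newb\big(z, (I-\tpi_h)v\big)\big|.
\eeqs

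Now I would invoke the Galerkin orthogonality \eqref{eq_Gogt} for $\newb$: since $\pi_h z \in V_h$, one has $\newb(\pi_h z, (I-\tpi_h)v) = 0$, so $\newb(z, (I-\tpi_h)v) = \newb((I-\pi_h)z, (I-\tpi_h)v)$. Applying continuity \eqref{eq_tb_cty} of $\newb$ with constant $\Ccontt$ gives
\beqs
\|g\|_{L^2}^2 \leq \Ccontt\, \big\| (I-\pi_h) z \big\|_{H^1_k(\Omega)} \, \big\| (I-\tpi_h)v\big\|_{H^1_k(\Omega)}.
\eeqs
Finally, since $z = \tsol \operator g$, the definition \eqref{eq_definition_Caprxop} of $\Caprxop$ provides
\beqs
k\big\| (I-\pi_h) z \big\|_{H^1_k(\Omega)} \leq \Caprxop \N{g}_{L^2(\Omega)},
\eeqs
and inserting this into the previous display, dividing by $\|g\|_{L^2(\Omega)}$, and multiplying through by $k$ yields exactly the claimed inequality.

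Each step is essentially a one-liner, and I do not anticipate a real obstacle. The only point that requires some care is making sure the self-adjointness of $\operator$ is deployed correctly against the sesquilinearity of the $L^2$ inner product, and that Galerkin orthogonality acts on the correct argument of $\newb$ (first argument, per \eqref{eq_Gogt}); this dictates subtracting $\pi_h z$ from $z$ rather than from $(I-\tpi_h)v$.
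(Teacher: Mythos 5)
Your argument is correct and is essentially the paper's own proof: the paper likewise writes $\|\operator(I-\tpi_h)v\|_{L^2}^2=(\operator^2(I-\tpi_h)v,(I-\tpi_h)v)_{L^2}=\newb(\tsol\operator^2(I-\tpi_h)v,(I-\tpi_h)v)$, inserts $(I-\pi_h)$ in the first argument via \eqref{eq_Gogt}, and concludes with continuity of $\newb$ and the definition of $\Caprxop$, with your $z=\tsol\operator g$ being exactly the paper's $\tsol\operator^2(I-\tpi_h)v$. The only cosmetic differences are that the paper orders the inner product so no conjugation/absolute-value step is needed, and the trivial case $\operator(I-\tpi_h)v=0$ (before dividing by its norm) is handled implicitly in both.
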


\bpf[Proof of Theorem \ref{thm_abs1} using Lemmas \ref{lem:abs1} and \ref{lem:abs2}]
Once the bound \eqref{eq:abs_bound1} is established under the assumption that $u_h$ exists, applying this bound with $\psi=0$ and using uniqueness of the continuous problem \eqref{eq_vpb} implies that $u_h$, if it exists, is unique. Existence of $u_h$ then follows since $u_h$ is the solution of a finite-dimensional linear system, for which uniqueness is equivalent to existence.

It is therefore sufficient to prove \eqref{eq:abs_bound1} under the assumption that $u_h$ exists.
By the definition of $\Caprxt$ \eqref{eq_definition_Caprxt}, Lemma \ref{lem:abs2}, 
\eqref{eq_tpi_pi}, and the definition of $\Caprx$ \eqref{eq_definition_Caprx},
\beq\label{eq_extra1}
\Caprxt \leq 
k \Ccontt \Caprx \big\| (I-\pi_h^+)\asol \operator\big\|_{L^2(\Omega)\to H^1_k(\Omega)}
\leq
\frac{(\Ccontt)^2}{\Ccoer} \Caprxop \Caprx.
\eeq
Therefore
\beqs
A:= 1 - 2\frac{(\Ccontt)^2}{\Ccoer}  \Caprxop \Caprx\leq 1-2\Caprxt \leq 1
\eeqs
and using this in \eqref{eq:abs_bound2} gives the result \eqref{eq:abs_bound1}.
\epf

\

We now prove Lemma \ref{lem:abs2} using a duality argument involving the sesquilinear form $\newb $.

\

\bpf[Proof of Lemma \ref{lem:abs2}]
By the definition of $\tsol$ \eqref{eq_tsol}, the Galerkin orthogonality \eqref{eq_Gogt}, and continuity of $\newb $ \eqref{eq_tb_cty}
\begin{align*}
\big\| \operator (I-\tpi_h) v \big\|^2_{L^2(\Omega)}
&= \big ( \operator^2 (I-\tpi_h)v , (I-\tpi_h)v \big )_{L^2(\Omega)}\\
&= \newb \big(\tsol\operator^2 (I-\tpi_h)v , (I-\tpi_h)v\big)\\
&= \newb \big((I-\pi_h)\tsol\operator^2 (I-\tpi_h)v , (I-\tpi_h)v\big)\\
&\leq \Ccontt \big\| 
(I-\pi_h) \tsol \operator
\big\|_{L^2(\Omega)\to H^1_k(\Omega)} 
\big\|
\operator (I-\tpi_h) v
\big\|_{L^2(\Omega)}
\big\|
 (I-\tpi_h) v
 \big\|_{H^1_k(\Omega)},
\end{align*}
and the result follows by the definition of $\Caprxop$ \eqref{eq_definition_Caprxop}.
\epf

\

To complete the proof of Theorem \ref{thm_abs1}, we therefore need to prove Lemma \ref{lem:abs1}.
To do this, we start with a simple result controlling the variational crime error.

\begin{lemma}[``Galerkin orthogonality'' in the sesquilinear form $b(\cdot,\cdot)$]
If $\PMLsol \in \mathcal{H}$ 
is the solution of the variational problem \eqref{eq_vpb} 
and $\PMLsol_h \in V_h$ is the solution of the variational problem
\eqref{eq_vpbh}, then, for all $w_h\in V_h$,
\begin{equation}
\label{eq_quasi_orthogonality}
|b(\PMLsol-\PMLsol_h,w_h)|
\leq
\Ccont \Big( \Cquad \|\PMLsol-\PMLsol_h\|_{H^1_k(\Omega)}+\Cquad \|\PMLsol\|_{H^1_k(\Omega)}
+ \Cdrhs \|\psi\|_{(H^1_k(\Omega))^*}
\Big)\|w_h\|_{H^1_k(\Omega)}.
\end{equation}
\end{lemma}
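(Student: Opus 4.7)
The plan is to start from the definition of the continuous and discrete problems and manipulate $b(u-u_h, w_h)$ into a sum of two quantities that can be bounded directly by the constants $\Cquad$ and $\Cdrhs$ from \eqref{eq_definition_Cquad} and \eqref{eq_definition_Cdrhs}. Since $w_h\in V_h\subset \cH$, I can evaluate $b$ at $(u,w_h)$, use \eqref{eq_vpb} to replace $b(u,w_h)$ by $\langle \psi,w_h\rangle$, and then add and subtract $\langle \psi_h,w_h\rangle$ and $b_h(u_h,w_h)$ to obtain the decomposition
\begin{equation*}
b(u-u_h,w_h) = \langle \psi-\psi_h, w_h\rangle + \big(b_h(u_h,w_h)-b(u_h,w_h)\big),
\end{equation*}
where I used the discrete equation \eqref{eq_vpbh} to cancel $\langle \psi_h,w_h\rangle - b_h(u_h,w_h) = 0$.

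Next I would bound the two terms separately. The first term is immediately controlled by \eqref{eq_definition_Cdrhs}: $|\langle \psi-\psi_h,w_h\rangle|\leq \Ccont \Cdrhs \|\psi\|_{(H^1_k(\Omega))^*}\|w_h\|_{H^1_k(\Omega)}$. For the second term, both arguments lie in $V_h$, so \eqref{eq_definition_Cquad} applies and yields $|(b_h-b)(u_h,w_h)|\leq \Ccont \Cquad \|u_h\|_{H^1_k(\Omega)}\|w_h\|_{H^1_k(\Omega)}$. Finally, the triangle inequality $\|u_h\|_{H^1_k(\Omega)}\leq \|u-u_h\|_{H^1_k(\Omega)}+\|u\|_{H^1_k(\Omega)}$ converts this into the form stated in \eqref{eq_quasi_orthogonality}, and collecting everything gives the desired inequality.

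There is no substantive obstacle here: the argument is essentially the standard Strang-lemma-style consistency calculation, where the only mild subtlety is that $b_h$ is only defined on $V_h\times V_h$, so the splitting must keep $u_h$ (rather than $u$) as the first argument of $b_h-b$. That is exactly why the bound must involve $\|u_h\|_{H^1_k(\Omega)}$, and hence, via the triangle inequality, both $\|u-u_h\|_{H^1_k(\Omega)}$ and $\|u\|_{H^1_k(\Omega)}$ on the right-hand side of \eqref{eq_quasi_orthogonality}.
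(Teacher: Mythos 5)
Your proposal is correct and follows essentially the same route as the paper: the identical decomposition $b(u-u_h,w_h)=\langle\psi-\psi_h,w_h\rangle+\big(b_h(u_h,w_h)-b(u_h,w_h)\big)$, the bounds via the definitions \eqref{eq_definition_Cdrhs} and \eqref{eq_definition_Cquad}, and the final triangle inequality on $\|u_h\|_{H^1_k(\Omega)}$. Nothing is missing.
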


\begin{proof}
By  \eqref{eq_vpb} and \eqref{eq_vpbh},
\begin{align*}
b(\PMLsol-\PMLsol_h,w_h) = b(\PMLsol,w_h)-b(\PMLsol_h,w_h) & = \langle \psi, w_h\rangle  - b_h(\PMLsol_h,w_h)+ \big( b_h(\PMLsol_h,w_h) - b(\PMLsol_h,w_h)\big)\\
&= \langle \psi-\psi_h,w_h \rangle + \big( b_h(\PMLsol_h,w_h) - b(\PMLsol_h,w_h)\big).
\end{align*}
Then, by the definitions of $\Cdrhs$ \eqref{eq_definition_Cdrhs} and $\Cquad$ \eqref{eq_definition_Cquad},
\begin{equation*}
|b(\PMLsol-\PMLsol_h,w_h)| \leq \Ccont\Big( \Cdrhs \|\psi\|_{(H^1_k(\Omega))^*} + \Cquad 
\|\PMLsol_h\|_{H^1_k(\Omega)}\Big)\|w_h\|_{H^1_k(\Omega)}.
\end{equation*}
The result \eqref{eq_quasi_orthogonality} then follows from the triangle inequality.
\end{proof}

\begin{lemma}[Useful quadratic inequality]
\label{lemma_quadratic}
Let $a>0$, $b,c \geq 0$ and $x \geq 0$. If
\begin{equation}\label{eq_quadratic1}
ax^2 \leq bx + c^2,
\end{equation}
then 
\begin{equation}
\label{eq_quadratic}
ax \leq b+c\sqrt{a}.
\end{equation}
\end{lemma}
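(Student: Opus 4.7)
The plan is to view the hypothesis $ax^2 \leq bx + c^2$ as a quadratic inequality in $x$ and simply read off the largest value of $x$ compatible with it. Set $p(x) := ax^2 - bx - c^2$; since $a > 0$, the parabola $p$ opens upward, and its roots are
\[
x_\pm = \frac{b \pm \sqrt{b^2 + 4ac^2}}{2a}.
\]
Because $\sqrt{b^2 + 4ac^2} \geq b$, the smaller root $x_-$ is non-positive, so the constraints $x \geq 0$ and $p(x) \leq 0$ together force $x \leq x_+$. Multiplying through by $a$ gives
\[
ax \leq \frac{b + \sqrt{b^2 + 4ac^2}}{2}.
\]

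To finish, I need to verify that this upper bound is at most $b + c$. Equivalently, $\sqrt{b^2 + 4ac^2} \leq b + 2c$; since both sides are non-negative it suffices to compare squares, and the claim becomes $b^2 + 4ac^2 \leq b^2 + 4bc + 4c^2$, which rearranges to $(a-1)c^2 \leq bc$. The left-hand side is non-positive by the assumption $a \leq 1$ and the right-hand side is non-negative since $b,c \geq 0$, so the inequality is automatic. This is the only place the hypothesis $a \leq 1$ enters; every other step works for arbitrary $a > 0$.

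There is no genuine obstacle here, only the minor book-keeping point of confirming sign conditions before squaring so that the comparison of roots is legitimate. A fully equivalent, computation-free alternative is proof by contradiction: assuming $ax > b + c$ gives $ax - b > c \geq 0$, so one may square to obtain $(ax-b)^2 > c^2$; combined with the consequence $a^2 x^2 \leq abx + ac^2$ of multiplying the hypothesis by $a$, this rearranges to $b(b - ax) > (1-a)c^2$, whose left-hand side is non-positive (as $b \geq 0$ and $ax > b$) while its right-hand side is non-negative (as $a \leq 1$), a contradiction. Either route is essentially a two-line verification, and since the lemma is purely algebraic its proof is independent of the rest of the paper.
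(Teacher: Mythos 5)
Your proposal is correct and follows essentially the same route as the paper: your quadratic-formula bound $ax \leq \tfrac{b}{2} + \sqrt{\tfrac{b^2}{4} + ac^2}$ is exactly the paper's completed-square intermediate estimate, and the only cosmetic difference is that you verify the final comparison with $b+c$ by squaring, whereas the paper uses subadditivity of the square root together with $\sqrt{a c^2}\leq c$. Both uses of $a\leq 1$ coincide, so nothing further is needed.
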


\begin{proof}
Multiplying the inequality \eqref{eq_quadratic1} by $a>0$, we find that
\beqs
\left( a x - \frac{b}{2}\right)^2 - \frac{b^2}{4} \leq a c^2, \quad \text{ and thus }\quad
ax \leq \frac{b}{2} + \sqrt{\frac{b^2}{4}+ac^2}.
\end{equation*}
The result \eqref{eq_quadratic} follows since 
\begin{equation*}
\sqrt{\frac{b^2}{4}+ac^2}
\leq
\sqrt{\frac{b^2}{4}}+\sqrt{ac^2}
\leq
\frac{b}{2} + c\sqrt{a}.
\end{equation*}
\end{proof}

\

\bpf[Proof of Lemma \ref{lem:abs1}]
Let $e := u-u_h$. By the G\aa rding-type inequality \eqref{eq_Garding}, \eqref{eq_quasi_orthogonality}, and continuity of $b$ \eqref{eq_definition_Ccont},
\begin{align*}
\Ccoer \N{ e}^2_{H^1_k} &\leq \Re b(e,e) + 2k^2 \N{\operator e}^2_{L^2} \\
& = \Re b\big(e, (I-\pi_h)u\big) + 2k^2 \N{\operator e}^2_{L^2} - \Re b(e, u_h - \pi_h u) \\
&\leq \Ccont \N{e}_{H^1_k} \N{(I-\pi_h) u}_{H^1_k} + 2k^2 \N{\operator e}^2_{L^2} 
 \\
& \hspace{3cm}+ \Ccont \Big( \Cquad \N{e}_{H^1_k}+ \Cquad \N{u}_{H^1_k} + \Cdrhs \N{\psi}_{(H^1_k)^*}\Big) \N{e}_{H^1_k},
\end{align*}
where to obtain the last inequality we have used (i) that $u_h -\pi_h u = \pi_h e$ and (ii) $\N{\pi_h e}_{H^1_k}\leq \N{e}_{H^1_k}$ since $\pi_h$ is a projection 
\eqref{eq_definition_pih}. Rearranging the last displayed equation, we obtain that
\begin{align*}
\big(\Ccoer - \Ccont \Cquad\big) \N{e}^2_{H^1_k} \leq \Ccont \N{e}_{H^1_k} \Big( \N{(I-\pi_h)u}_{H^1_k} &+ \Cquad\N{u}_{H^1_k} + \Cdrhs\N{\psi}_{(H^1_k)^*}\Big) 
+ 2k^2 \N{\operator e}^2_{L^2}.
\end{align*}
Then, by Lemma \ref{lemma_quadratic} and the fact that $\sqrt{\Ccoer -\Ccont \Cquad}\leq \sqrt{\Ccoer}$,
\begin{align}\label{eq_cold0}
\big(\Ccoer - \Ccont \Cquad\big) \N{e}_{H^1_k} \leq \Ccont \Big( \N{(I-\pi_h)u}_{H^1_k} + \Cquad\N{u}_{H^1_k} + \Cdrhs\N{\psi}_{(H^1_k)^*}\Big) 
+ \sqrt{2\Ccoer}k \N{\operator e}_{L^2}.
\end{align}
We now use a duality argument to bound $\| \operator e\|_{L^2}$. By the self-adjointness of $\operator$, the definition of $\asol$ \eqref{eq_asol}, \eqref{eq_quasi_orthogonality}, and \eqref{eq_norm_tpi},
\begin{align}\nonumber
\N{\operator e}^2_{L^2} &= b( e ,\asol \operator^2 e) \\ \nonumber
& = b\big( e, (I-\tpi_h) \asol \operator^2 e\big) + b\big( e, \tpi_h \asol \operator^2 e\big) \\ \nonumber
& \leq b\big( e, (I-\tpi_h) \asol \operator^2 e\big)  
\\
&\quad+ \Ccont \Big( \Cquad\N{e}_{H^1_k} + \Cquad\N{u}_{H^1_k} + \Cdrhs\N{\psi}_{(H^1_k)^*}\Big)\frac{\Ccontt}{\Ccoer} \big\|\asol \operator\|_{L^2\to H^1_k} \N{\operator e}_{L^2}.\label{eq_cold1}
\end{align}
Now, by the definition of $\newb $ \eqref{eq_btilde}, the Galerkin orthogonality \eqref{eq_Gogt}, the self-adjointness of $\operator$, continuity of $\newb $ \eqref{eq_tb_cty}, 
\eqref{eq_tpi_pi}, and the definitions of $\Caprx$ \eqref{eq_definition_Caprx} and $\Caprxt$ \eqref{eq_definition_Caprxt},
\begin{align}\nonumber
b\big( e, (I-\tpi_h) \asol \operator^2 e\big)
&= \newb \big( (I-\pi_h) u , (I-\tpi_h)\asol \operator^2 e\big) - 2k^2 \big( \operator^2 e, (I-\tpi_h) \asol \operator^2 e\big)_{L^2}\\ \nonumber
&= \newb \big( (I-\pi_h) u , (I-\tpi_h)\asol \operator^2 e\big) -  2 k^2 \big( \operator e, \operator(I-\tpi_h) \asol \operator^2 e\big)_{L^2}\\
&\leq \Ccontt \N{(I-\pi_h)u}_{H^1_k} \frac{\Ccontt}{\Ccoer} k^{-1}\Caprx 
\N{\operator e}_{L^2} + 2\Caprxt 
 \N{\operator e}_{L^2}^2.\label{eq_cold2}
\end{align}
Combining \eqref{eq_cold1} and \eqref{eq_cold2}, multiplying by $k$, dividing by $\|\operator e\|_{L^2}$, and using the definition of $\Cstab$ \eqref{eq_definition_Cstab}, we obtain that 
\begin{align}
\Big( 1 - 2\Caprxt 
\Big) k\N{\operator e}_{L^2} &\leq \frac{(\Ccontt)^2}{\Ccoer}  \Caprx 
\N{(I-\pi_h)u}_{H^1_k} 
+ \frac{\Ccont \Ccontt}{\Ccoer} 
\Cstab\Big( \Cquad \N{e}_{H^1_k} + \Cquad\N{u}_{H^1_k} + \Cdrhs\N{\psi}_{(H^1_k)^*}\Big).\label{eq_cold3}
\end{align}
Multiplying \eqref{eq_cold0} by $( 1 - 2\Caprxt)$ and inputting \eqref{eq_cold3}, we obtain
\begin{align*}
&\Big( 1 - 2\Caprxt
\Big)\big(\Ccoer - \Ccont \Cquad\big) \N{e}_{H^1_k}
\\
& \hspace{0.75cm}\leq 
\Big( 1 - 2\Caprxt
\Big)
\Ccont \Big( \N{(I-\pi_h)u}_{H^1_k} + \Cquad\N{u}_{H^1_k} + \Cdrhs\N{\psi}_{(H^1_k)^*}\Big) 
+
\sqrt{\frac{2}{\Ccoer}}(\Ccontt)^2
 \Caprx
\N{(I-\pi_h)u}_{H^1_k}
\\& \hspace{3.5cm}
+ \sqrt{\frac{2}{\Ccoer}}\Ccont \Ccontt
\Cstab\Big( \Cquad \N{e}_{H^1_k} + \Cquad\N{u}_{H^1_k} + \Cdrhs\N{\psi}_{(H^1_k)^*}\Big),
\end{align*}
which then rearranges to the result \eqref{eq:abs_bound2}.
\epf

\section{Applying the abstract theory to Helmholtz problems solved with curved finite elements}
\label{sec:all_here}

\subsection{The Helmholtz problem with piecewise-smooth coefficients}\label{sec_4.1}

Let $\Omega \subset \mathbb R^d$, $d=2,3$, be a bounded Lipschitz domain with characteristic length scale $L$. The boundary $\partial \Omega$ of $\Omega$ is partitioned into two
disjoint relatively open Lipschitz subsets $\GD$ and $\GN$ (where the subscripts ``${\rm D}$'' and ``${\rm N}$'' stand for ``Dirichlet'' and ``Neumann"). 
We highlight that this set-up means that 
the boundaries of $\GD$ and $\GN$ do not touch.
Let $\cH$ be the Hilbert space
\beq\label{eq:H1GD}
H^1_{\GD}(\Omega):=\big\{ v\in H^1(\Omega) : v= 0 \,\ton \GD\big\}.
\eeq
The domain $\Omega$ is partitioned into disjoint
open Lipschitz subsets $Q \in \Part$, and we assume that the restrictions of the PDE
coefficients to each $Q$ are smooth. Specifically, for each $Q \in \Part$, we consider
two functions
$\mu_Q: \mathbb R^d \to \mathbb C$ and
$A_Q: \mathbb R^d \to \mathbb C^{d \times d}$ and the sesquilinear form then reads
\begin{align}\nonumber
b(v,w)
&\eq
\sum_{Q \in \Part}
\Big \{-k^2 (\mu_Qv,w)_{L^2(Q)}
+
(A_Q\nabla v,\nabla w)_{L^2(Q)}
\Big \}\\
&=
-k^2(\mu v,w)_{L^2(\Omega)}
+(A\nabla v,\nabla w)_{L^2(\Omega)}
\quad
\tfa v,w \in H^1_{\GD}(\Omega),
\label{eq_actual_b}
\end{align}
where $\mu|_Q := \mu_Q$, $A|_Q :=A_Q$.
To define the sesquilinear form $b$, the values of $\mu_Q$ and $A_Q$ are not needed outside $Q$. However,
assuming that they are also defined in a neighborhood of $Q$ is useful when defining the discrete bilinear form (involving geometric error).
We note that in the case of piecewise constant coefficients or coefficients
given by analytical formula (e.g., in a PML) such extensions are automatically defined.

\begin{assumption}[Strong ellipticity of the Helmholtz operator]\label{ass_strong_ellipticity}
There exists $c>0$ such that
\beqs
\Re \sum_{i=1}^d \sum_{j=1}^d A_{ij}(x) \xi_j \overline{\xi_i} \geq c |\xi|^2 \quad\tfa x \in \Omega \tand \xi\in \mathbb{R}^d.
\eeqs
\end{assumption}

Assumption \ref{ass_strong_ellipticity} implies that the Helmholtz operator is strongly elliptic in the sense of \cite[Equation 4.7]{Mc:00} by, e.g., \cite[Page 122]{Mc:00}.

\begin{assumption}[Regularity assumption on $A,\mu$, and $\partial Q$]\label{ass_regularity}
For some $\ell \in \mathbb{Z}^+$,
$A_Q, \mu_Q \in C^{\ell-1 ,1}$ and $\partial Q \in C^{\ell,1}$ for all $Q\in \mathcal{Q}$.
\end{assumption}

Note that the requirement in Assumption \ref{ass_regularity} that $\partial Q \in C^{\ell,1}$ excludes ``triple points" where three of the $Q$ regions meet.
Assumption \ref{ass_regularity} implies that the Helmholtz operator satisfies the standard elliptic-regularity shift (see, e.g., \cite[Theorem 4.20]{Mc:00}). 
Thanks to 
\cite{GaSp:23}
this shift property can be used to bound the quantities $\Caprx$ \eqref{eq_definition_Caprx} and $\Caprxop$ \eqref{eq_definition_Caprxop}, respectively, appearing in Theorem \ref{thm_abs1}; see \S\ref{sec:aprxop} below.

We additionally assume that 
\beq\label{eq:RHS}
\langle \psi, v \rangle =  (\data,v)_{L^2(\Omega)}
\eeq
with $\data= \data_Q$ on $Q$ and $\data_Q: \mathbb R^d \to \mathbb C$ with $\data_Q\in H^1(\mathbb{R}^d)$.

Given the partition $\Part$ of $\Omega$, we use the notation that
$v\in H^j(\mathcal{Q})$ iff 
 $v|_{Q} \in H^j(Q)$ for all $Q\in \mathcal{Q}$, and we let 
\beq\label{eq_norm_Part}
\|v\|_{H^m(\mathcal{Q})}^2:= \sum_{Q\in \mathcal{Q}}\|v\|_{H^m(Q)}^2
\eeq
where 
\beqs
\N{u}^2_{H^m(D)}:= \sum_{|\alpha|\leq m} L^{2(m-|\alpha|)} \N{\partial^\alpha u}^2_{L^2(D)}
\quad \tfor m\in\mathbb{Z}^+,
\eeqs
where $L$ is the characteristic length scale of $\Omega$; these factors of $L$ are included to make each term in the norm have the same dimension.

\subsection{Curved finite-element setting}\label{sec_4.2}

Here, we introduce a finite element space on curved simplicial elements.
For such spaces, there are the following two key considerations. 
\begin{enumerate}
\item[(a)] On the one hand,
the mesh elements should be sufficiently curved so as to correctly approximate
the geometry of the boundary value problem (i.e. $\Omega$, $\GD$, $\GN$ and $\Part$).
\item[(b)] On the other hand, the elements should not be too distorted so as to preserve
the approximation properties of the finite element space. 
\end{enumerate}
These two aspects have
been thoroughly investigated in the finite-element literature 
(see e.g. \cite{Ci:02}, \cite{ciarlet_raviart_1972a} and \cite{lenoir_1986a}),
and this section summarises the key assumptions needed in our analysis.

We consider a non-conforming mesh $\CT_h$ that does not exactly cover $\Omega$
nor the partition $\Part$. The (closed) elements $K \in \CT_h$ are obtained by
mapping a single (closed) reference simplex $\hK$ through bilipschitz maps $\MAP_K: \hK \to K$.
The maps $\MAP_K$ could be affine (leading to straight elements) but we more generally
allow polynomials mappings of degree $q \geq 1$ leading to curved elements. We classicaly
assume that the mesh is ``matching'', meaning that
$\overset{\circ}{K} \cap \overset{\circ}{K'} = \emptyset$ for all $K,K' \in \CT_h$,
and $K \cap K'$ is either empty, or corresponds to a single vertex, a single edge
or a single face of the reference simplex $\hK$ mapped through $\MAP_K$
(or equivalently, $\MAP_{K'}$).

Let 
\beqs
\Omegah \eq \operatorname{Int}\Big(\bigcup_{K \in \CT_h} K\Big);
\eeqs
i.e., $\Omegah$ is the domain
covered by $\CT_h$. In general, $\Omegah \neq \Omega$, but we expect
that $\CT_h$ is ``close'' to matching the boundary and interfaces in the PDE problem.
To encode this, we assume that for each $K \in \CT_h$, there exists
a bilipschitz mapping $\Psi^h_K: K \to \mathbb R^d$, such that if $\tK \eq \Psi_K^h(K)$,
then
\begin{enumerate}
\item[(i)] there exists $Q \in \Part$ such that $\tK \subset \overline{Q}$,
\item[(ii)] $\bigcup_{K \in \CT_h} \tK = \overline{\Omega}$,
\item[(iii)] letting $\Psi^h|_K := \Psi^h_K$ for all $K \in \CT_h$ defines a bilipschitz
mapping $\Psi^h: \overline{\Omegah} \to \overline{\Omega}$.
\end{enumerate}
Let 
\beqs
\Phi^h_K \eq (\Psi^h_K)^{-1} \quad\tand\quad 
\Phi^h \eq (\Psi^h)^{-1}.
\eeqs 
Note then that $\Phi^h|_{\tK} = \Phi^h_K$
for all $K \in \CT_h$. Figure \ref{figure_example} illustrates this set-up in a particular example.
We  let 
\beqs
\GDh \eq \Phi^h(\GD), \quad \GNh \eq \Phi^h(\GN), \quad Q^h \eq \Phi^h(Q)\, \tfor Q \in \Part;
\eeqs
and 
\beqs
\Parth \eq \{ Q^h \}_{Q \in \Part}.
\eeqs
Notice that
if $\Psi^h = I$, then $\Omegah = \Omega$, $\GDh = \GD$,
$\GNh = \GN$ and $Q^h = Q$ for all $Q \in \CT_h$, so that the mesh is
actually conforming. We therefore expect each map $\Psi_K^h$ to be close
to the identity in a suitable sense (made precise in Assumption \ref{assumption_curved_fem} below). Without loss of
generality we assume that $\det \nabla\Phi^h, \det \nabla\Psi^h > 0$, where $\nabla \Phi^h$ and $\nabla \Psi^h$ denote the Fr\'echet derivatives of $\Phi^h$ and $\Psi^h$, respectively.

Finally, we assume that both $\GDh$ and $\GNh$ are unions of full faces
of elements $K \in \CT_h$. In other words, every mesh face either sits in $\GDh$
or in $\GNh$.

\begin{figure}[h]

\begin{tikzpicture}[scale=3]
\draw[ultra thick] (-1, 1) -- (1, 1);
\draw[ultra thick] (-1, 0) -- (1, 0);

\draw[ultra thick]        (-1, 0) to[in=180,out=180] (-1,1);
\draw[ultra thick,dashed] ( 0, 0) to[in=-110,out=70] ( 0,1);
\draw[ultra thick]        ( 1, 0) to[in=0,out=0]     ( 1,1);

\draw ( 0.0,0.0) node[anchor=north] {$\Omega$};
\draw (-0.5,0.5) node {$Q_-$};
\draw ( 0.5,0.5) node {$Q_+$};
\begin{scope}[xshift=2.5cm]
\draw[ultra thick] (-1, 1) -- (1, 1);
\draw[ultra thick] (-1, 0) -- (1, 0);

\draw[ultra thick]        (-1, 0) -- (-1,1);
\draw[ultra thick,dashed] ( 0, 0) -- ( 0,1);
\draw[ultra thick]        ( 1, 0) -- ( 1,1);

\draw (0,0) node[anchor=north] {$\Omega^h$};
\draw (-0.5,0.5) node {$Q_-^h$};
\draw ( 0.5,0.5) node {$Q_+^h$};
\end{scope}
\end{tikzpicture}

\begin{tikzpicture}[scale=3]
\draw[ultra thick] (-1, 1) -- (1, 1);
\draw[ultra thick] (-1, 0) -- (1, 0);

\draw[ultra thick]        (-1, 0) to[in=180,out=180] (-1,1);
\draw[ultra thick]        ( 1, 0) to[in=0,out=0]     ( 1,1);

\draw( 0, 0) to[in=-110,out=70] ( 0,1);


\draw (-1, 0) -- ( 0,1);
\draw ( 0, 0) -- ( 1,1);

\draw (-1, 1) -- ( 0,0);
\draw ( 0, 1) -- ( 1,0);

\draw ( 0.0,0.0) node[anchor=north] {$\{\widetilde K\}_{K \in \CT_h}$};

\draw (-0.75,0.50) node {$\widetilde K_1$};
\draw (-0.50,0.75) node {$\widetilde K_2$};
\draw (-0.25,0.50) node {$\widetilde K_3$};
\draw (-0.50,0.25) node {$\widetilde K_4$};

\draw ( 0.25,0.50) node {$\widetilde K_5$};
\draw ( 0.50,0.75) node {$\widetilde K_6$};
\draw ( 0.75,0.50) node {$\widetilde K_7$};
\draw ( 0.50,0.25) node {$\widetilde K_8$};

\draw[thick,->,rounded corners=1cm] (0.70,0.9) -- (1.75,1.1)-- (2.80,0.9);
\draw (1.75,1.1) node[anchor=south] {$\Phi^h|_{\tK_6}$};

\draw[thick,<-,rounded corners=1cm] (0.70,0.1) -- (1.75,-0.1)-- (2.80,0.1);
\draw (1.75,-0.1) node[anchor=north] {$\Psi^h|_{K_8}$};
\begin{scope}[xshift=2.5cm]
\draw[ultra thick] (-1, 1) -- (1, 1);
\draw[ultra thick] (-1, 0) -- (1, 0);

\draw[ultra thick]        (-1, 0) -- (-1,1);
\draw[ultra thick]        ( 1, 0) -- ( 1,1);

\draw (-1, 0) -- (-1,1);
\draw ( 0, 0) -- ( 0,1);
\draw ( 1, 0) -- ( 1,1);

\draw (-1, 0) -- ( 0,1);
\draw ( 0, 0) -- ( 1,1);

\draw (-1, 1) -- ( 0,0);
\draw ( 0, 1) -- ( 1,0);

\draw ( 0.0,0.0) node[anchor=north] {\color{white} $\{\widetilde K\}_{K \in \CT_h}$};
\draw ( 0.0,0.0) node[anchor=north] {$\CT_h$};

\draw (-0.75,0.50) node {$K_1$};
\draw (-0.50,0.75) node {$K_2$};
\draw (-0.25,0.50) node {$K_3$};
\draw (-0.50,0.25) node {$K_4$};

\draw ( 0.25,0.50) node {$K_5$};
\draw ( 0.50,0.75) node {$K_6$};
\draw ( 0.75,0.50) node {$K_7$};
\draw ( 0.50,0.25) node {$K_8$};
\end{scope}
\end{tikzpicture}

\caption{Example of a curved domain $\Omega$ with two subdomains $Q_\pm$. The
mesh $\CT_h$ consisting of straight elements induces an approximate domain $\Omega^h$ partitioned
into $Q_\pm^h$. Here, for each $K \in \CT_h$, $\mathcal F_K: \widehat K \to K$ is the affine map
mapping the reference element into $K$. The mapping $\Psi^h$ maps $\Omega^h$ onto $\Omega$,
and, in particular, maps each element $K_j$ onto $\widetilde K_j$. In this particular example,
$\Psi^h|_{K_j}$ is the identity map for $j=2,4,6$ and $8$, and is a non-affine mapping on
the remaining elements. $\Phi^h$ correspondingly maps $\widetilde K_j$ onto $K_j$.}
\label{figure_example}
\end{figure}
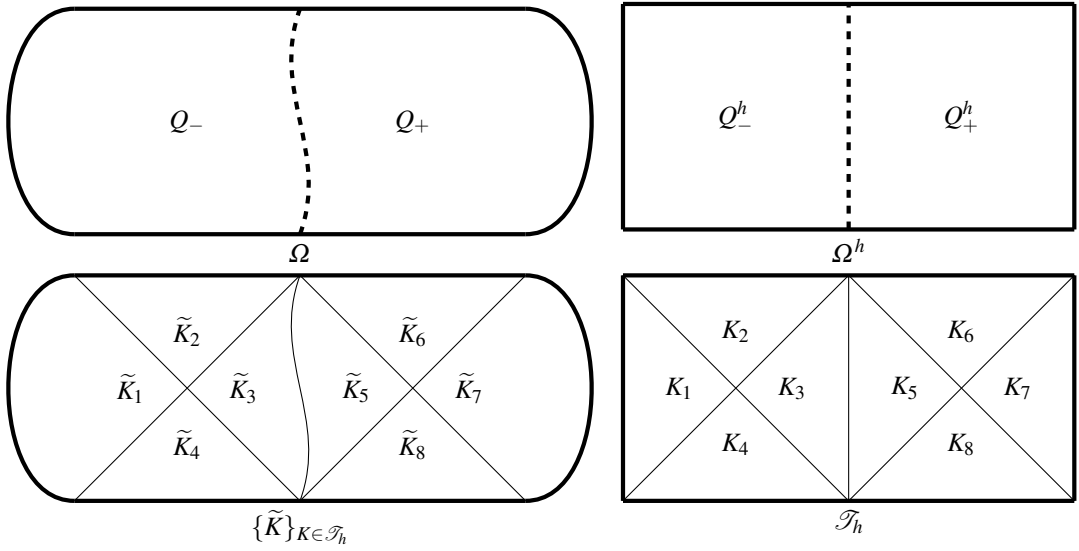

Fixing a polynomial degree $p \geq 1$, we associate with $\CT_h$ the following
finite element space:
\begin{equation*}
V_h
\eq
\left \{
v_h \in H^1_{\GDh}(\Omegah)
\; | \;
v_h \circ \MAP_K \in \CP_p(\hK)
\right \}.
\end{equation*}
Note that elements of $V_h$ need not be piecewise polynomials, since the
map $\MAP_K$ is not polynomial in general. Since piecewise polynomial spaces
enjoy excellent approximation properties, we therefore want $\MAP_K$ to be close to being affine, meaning that the mesh elements are not too distorted.

We can make the requirements needed for the maps $\MAP_K$ and $\Psi_K^h$ precise as follows (compare to \cite[Theorem 1 and Propositions 2 and 3]{lenoir_1986a}).

\begin{assumption}[Curved finite element space]
\label{assumption_curved_fem}
The elements $K \in \CT_h$ are not too distorted,
in the sense that the maps $\MAP_K$ satisfy
\begin{equation}
\label{eq_mapK}
\|\nabla^s\MAP_K\|_{L^\infty(\hK)}
\leq
C 
L\left (\frac{h_K}{L}\right )^s,
\quad\tand\quad
\|\nabla^s(\MAP_K^{-1})\|_{L^\infty(K)}
\leq
C h_K^{-s},
\end{equation}
for $1 \leq s \leq p+1$, where $h_K$ is the diameter of $K$.

The elements $K \in \CT_h$ approximate the geometry well, in the sense that 
\begin{equation}\label{eq_bound_Psi}
\|\nabla^s(\Psi_K^h-I)\|_{L^\infty(K)} \leq C 
\left(\frac{h_{K}}{L}\right)^q h_{K}^{1-s},
\qquad
\|\det (\nabla \Psi_K^h)-1\|_{L^\infty(K)} \leq C \left(\frac{h_{K}}{L}\right)^q,
\end{equation}
and
\begin{equation}\label{eq_bound_Phi}
\|\nabla^s(\Phi_K^h-I)\|_{L^\infty(\tK)}
\leq C 
\left(\frac{h_{K}}{L}\right)^q h_{K}^{1-s},
\qquad
\|\det (\nabla \Phi_K^h)-1\|_{L^\infty(\tK)} \leq C \left(\frac{h_{K}}{L}\right)^q,
\end{equation}
for $0 \leq s \leq q+1$.
\end{assumption}

The requirements in Assumption \ref{assumption_curved_fem} are easily met in practice.
Specifically, given a straight simplicial partition $\CT_h^\dagger$ that nodally conforms
to $\Part$ and $\Omega$, suitable maps $\MAP_K$ are constructed in \cite[\S3.2--3.3]{lenoir_1986a}.
Note that constructing $\CT_h^\dagger$ is in turn easily done with standard meshing
software like {\tt gmsh} (see \cite{geuzaine_remacle_2009a}).

Under Assumption \ref{assumption_curved_fem}, there exists an interpolation operator
$\CI_h: H^2(\Parth) \cap H^1_{\GDh}(\Omegah) \to V_h$ such that
\begin{equation}\label{eq_interpolation1}
h^{-1} \big\|(I-\CI_h)v\big\|_{L^2(\Omegah)} + \big|(I-\CI_h)v\big|_{H^1(\Omegah)}
\leq
C h^p \N{v}_{H^{p+1}(\Parth)}
\end{equation}
for all $v \in H^{p+1}(\Parth) \cap H^1_{\GDh}(\Omegah)$; see
\cite{ciarlet_raviart_1972a,lenoir_1986a}.
(Note that the piecewise Sobolev spaces on $\Parth$ are defined exactly as the ones on $\Part$, with norm given by the analogue of \eqref{eq_norm_Part}.)

We finally introduce a discrete sesquilinear form:
\begin{equation*}
b_h(v_h,w_h)
\eq
\sum_{Q^h \in \Parth}
\Big \{
-k^2(\mu_Qv_h,w_h)_{L^2(Q^h)}
+(A_Q\nabla v_h,\nabla w_h)_{L^2(Q^h)}
\Big \}
\quad
\tfa v_h,w_h \in V_h,
\end{equation*}
and a discrete right-hand side
\beq\label{eq_discrete_rhs2}
\langle \psi_h, w_h\rangle = \sum_{Q \in \Part}(\data_Q, w_h)_{L^2(Q^h)}.
\eeq
(Note that since each $Q^h$ is exactly covered by $\CT_h$,
the coefficients of the matrix associated with $b_h$ 
and the vector associated with $\psi_h$ can be computed efficiently.)
The discrete problem then consists of finding $u_h \in V_h$ such that
\begin{equation}
\label{eq_curved_fem}
b_h(u_h,w_h) = \langle \psi_h,w_h \rangle \quad\tfa w_h \in V_h.
\end{equation}

\begin{remark}[Quadrature error]\label{rem:quadrature}
In practice, if $\mu_Q$, $A_Q$, and $g_Q$, are not piecewise constant,
the integrals in the definitions of $b_h$ and $\psi_h$ are further approximated
by a quadrature rule. 
One way of doing this, which fits immediately into the framework above (and is also discussed in \cite[\S6]{lenoir_1986a}), is to introduce 
\beq\label{eq:sharp1}
b_h^\sharp(v_h,w_h)
\eq
\sum_{Q^h \in \Parth}
\Big \{
-k^2(\CJ_h(\mu_Q)v_h,w_h)_{L^2(Q^h)}
+(\CJ_h(A_Q)\nabla v_h,\nabla w_h)_{L^2(Q^h)}
\Big \}
\quad
\tfa v_h,w_h \in V_h,
\eeq
where $\CJ_h$ is an interpolation operator with $\CJ_h: W^{q,\infty}(K)\to \MAP_K(\CP_{q-1}(\hK))$ and the approximation property
\beq\label{eq:hot1}
\N{ I-\CJ_h}_{W^{q,\infty}(K) \to L^\infty(K)}\leq C (h/L)^q.
\eeq
Thus
\beqs
|b_h(v_h,w_h)-b^\sharp_h(v_h,w_h)| \leq (h/L)^q \Big(\N{\mu_Q}_{W^{q,\infty}} + \N{A_Q}_{W^{q,\infty}}\Big)\N{v_h}_{H^1_k(\Omega^h)}\N{w_h}_{H^1_k(\Omega^h)}.
\eeqs
Note that, by the definition of $\CJ_h$, when the integrals in \eqref{eq:sharp1} are maped back to reference element, the integrands are polynomials, and so can be integrated exactly.

In particular, for $q=1$, \eqref{eq:hot1} is satisfied with $\CJ_h$ taking the midpoint value, leading to 
\beq\label{eq:hot2}
b_h^\sharp(v_h,w_h)
\eq
\sum_{Q^h \in \Parth}\sum_{K\in Q^h}
\Big \{
-k^2(\mu_Q)(x_K)(v_h,w_h)_{L^2(K)}
+\big((A_Q)(x_K))\nabla v_h,\nabla w_h\big)_{L^2(K)}
\Big \},
\eeq
which is how $b(\cdot,\cdot)$ was approximated in the numerical experiments in \S\ref{sec:num}.

Finally, we note that the paper \cite{DrKeWo:20} proves bounds on $\Cquad$
\eqref{eq_definition_Cquad} for a particular method -- ``the surrogate matrix methodology" --
in the setting of isogeometric analysis; see \cite[Theorem 4.1]{DrKeWo:20}.
\end{remark}

\begin{remark}[Non-conforming methods]
The framework in \S\ref{sec:abstract} implements a so-called ``first'' Strang lemma, which is geared 
to dealing with a ``wrong'' sesquilinear form $b_h$ defined on the ``right'' variational space
(i.e., $V_h \subset H^1)$. On the other hand, non-conforming methods are dealt with a ``second''
Strang lemma (see, e.g., \cite[Theorem 31.1]{Ci:02}), in which $b$ is not approximated, but rather extended to work in a non-conforming
space (i.e. $V_h \not \subset H^1$). Using a second Strang lemma in conjunction with the 
framework in \S\ref{sec:abstract} is certainly possible, although it requires the introduction of extra quantities to control the
non-conformity. The analysis of a non-conforming method for Helmholtz problems through a second
Strang lemma is done, e.g., in~\cite{chaumont2024duality}.
\end{remark}

\subsection{A mapped finite element space}\label{sec:mapped}

The setting introduced above for curved finite elements does not immediately fit
our abstract framework. Indeed, $b_h$ contains exact integrals
on a wrong domain $\Omegah$, rather that inexact integrals on the right domain
$\Omega$. Following \cite{lenoir_1986a}, this is easily remedied by introducing
an abstract mapped finite element space. Notice that this space is never used in
actual computations; rather, it is a convenient tool for the analysis.

For $v_h \in V_h$, let 
\beq\label{eq_transformed_vh}
\tv_h \eq v_h \circ \Psi_h^{-1} \eq v_h \circ \Phi^h;
\eeq
recall from Section \ref{sec_4.2} that $\Psi^h: \overline{\Omegah} \to \overline{\Omega}$,
so that $\tv_h :\Omega\to \mathbb{C}$ and $\tv_h \in H^1_{\GD}(\Omega)$. 
The mapped finite element space $\tV_h \eq \{\tv_h; \; v_h \in V_h \}$ is therefore conforming, i.e., 
$\tV_h \subset H^1_{\GD}(\Omega)$.
Furthermore, instead of considering the solution $u_h$ of the variational problem
\eqref{eq_curved_fem}, we can (abstractly) solve a variational problem set in
$\Omega$ with the mapped finite element space $\tV_h$. Specifically, we can equivalently
consider the problem of finding
$\tu_h \in \tV_h$ such that
\begin{equation}\label{eq_lastday1}
\tb_h(\tu_h,\tw_h) = \langle \widetilde \psi_h,\tw_h \rangle
\qquad
\tfa\, \tw_h \in \tV_h,
\end{equation}
where
\begin{align}\nonumber
\tb_h(\tu_h,\tw_h)
&\eq
\sum_{Q \in \Part}
\Big\{
-k^2(\mu^h_Q \tu_h,\tw_h)_{L^2(Q)}
+(A^h_Q\nabla \tu_h,\nabla \tw_h)_{L^2(Q)}
\Big \}\\
&=
-k^2(\mu_h\tu_h,\tw_h)_{L^2(\Omega)} +(A^h\nabla \tu_h,\nabla \tw_h)_{L^2(\Omega)}
\label{eq_tildeb}
\end{align}
with 
\beq\label{eq_AQh}
\mu_Q^h \eq |\det \nabla \Phi^h| \big(\mu_Q \circ \Phi^h\big),\qquad 
A_Q^h \eq |\det \nabla \Phi^h| \nabla \Psi^h (A_Q\circ\Phi^h) (\nabla \Psi^h)^T
\eeq
and $\mu_h|_Q \eq \mu^h_Q$ and $A^h|_Q \eq A^h_Q$ for all $Q \in \Part$.
Similarly, 
\beq\label{eq_discrete_rhs}
\langle \widetilde{\psi}_h, \tw_h \rangle := \sum_{Q \in \Part} \big( \data^h_Q, \tw_h\big)_{L^2(\Omega)}
\quad \text{ with } \quad 
\data_Q^h \eq  |\det \nabla \Phi^h| \big(\data_Q \circ \Phi^h\big).
\eeq
These definitions are made so that, by the chain rule (see, e.g., \cite[Equation 9.8a]{ErGu:21}),
\beqs
b_h(v_h,w_h) = \tb_h(\tv_h,\tw_h)\quad \tfa v_h,w_h \in V_h,
\eeqs
and thus $u_h = \tu_h \circ \Psi^h$ by \eqref{eq_curved_fem} and \eqref{eq_lastday1}.

It is clear from Assumption \ref{assumption_curved_fem} and \eqref{eq_interpolation1} that
if we set $\tCI_h = \Psi^h \circ \CI_h \circ (\Psi^h)^{-1}$, we obtain a
suitable interpolation operator onto $\tV_h$. Specifically,
\cite[Lemma 7]{lenoir_1986a} shows that\footnote{Strictly
speaking, \cite[Lemma 7]{lenoir_1986a} establishes only the bound on the $H^1$ semi-norm,
but the proof for the bound on the $L^2$ norm is analogous.}
\begin{equation}
\label{eq_mapped_interpolant}
h^{-1} \|v-\tCI_h v\|_{L^2(\Omega)} + |v-\tCI_h v|_{H^1(\Omega)}
\leq
C h^p \N{v}_{H^{p+1}(\Part)} \quad\tfa v \in H^{p+1}(\Part) \cap H^1_{\GD}(\Omega).
\end{equation}

\subsection{Bounds on $\Cquad$ \eqref{eq_definition_Cquad} and $\Cdrhs$ \eqref{eq_definition_Cdrhs}}

With our framework established, we are ready to provide bounds
on 
$\Cquad$ \eqref{eq_definition_Cquad} and $\Cdrhs$ \eqref{eq_definition_Cdrhs}. The proof for $\Cquad$ closely follows
\cite[Lemma 8]{lenoir_1986a} whereas the auxiliary result established
in Appendix \ref{section_change_variable} is used for $\Cdrhs$.

\begin{theorem}[Bounds on $\Cquad$ \eqref{eq_definition_Cquad} and $\Cdrhs$ \eqref{eq_definition_Cdrhs}]\label{thm_gammaq}
If $V_h$ satisfies Assumption \ref{assumption_curved_fem}, then
\begin{equation}
\Cquad
\leq
C \left (\frac{h}{L}\right )^q W(\mu,A),
\qquad
\Cdrhs \|\psi\|_{(H^1_k)^\star}
\leq
\frac{C}{M}
\left (\frac{h}{L}\right )^q
\frac{1}{k}
\bigg (
\sum_{Q \in \Part}
\|\data_Q\|_{L^2(\mathbb R^d)}^2
+
h^2\|\nabla \data_Q\|_{L^2(\mathbb R^d)}^2
\bigg )^{1/2}
\label{eq_gammaq_bounds}
\end{equation}
with
\begin{equation*}
W(\mu,A)
:=
\frac{1}{M}
\max_{Q \in \Part}
\Big(
\|\mu_Q\|_{L^\infty(\mathbb R^d)} + h\|\nabla \mu_Q\|_{L^\infty(\mathbb R^d)}
+
\|A_Q\|_{L^\infty(\mathbb R^d)} + h\|\nabla A_Q\|_{L^\infty(\mathbb R^d)}
\Big).
\end{equation*}
\end{theorem}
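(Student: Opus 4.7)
The plan is to carry out everything on the fixed domain $\Omega$ via the mapped finite element space $\tV_h$ of \S\ref{sec:mapped}. The correspondence $v_h \mapsto \tv_h:=v_h\circ (\Psi^h)^{-1}$ identifies $V_h$ with $\tV_h \subset H^1_{\GD}(\Omega) = \cH$, and by construction $b_h(v_h,w_h) = \tb_h(\tv_h,\tw_h)$ and $\langle \psi_h, w_h\rangle = \langle \widetilde \psi_h, \tw_h\rangle$. Hence bounding $\Cquad$ and $\Cdrhs$ reduces to estimating $|b(\tv_h,\tw_h)-\tb_h(\tv_h,\tw_h)|$ and $|\langle \psi-\widetilde\psi_h, \tw_h\rangle|$ on $\tV_h$ in terms of $\|\tv_h\|_{H^1_k(\Omega)}$ and $\|\tw_h\|_{H^1_k(\Omega)}$; the differences come entirely from the coefficient/data perturbations $\mu-\mu^h$, $A-A^h$, and $\data-\data^h$ defined in \eqref{eq_AQh} and \eqref{eq_discrete_rhs}.

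For $\Cquad$, I would first bound the coefficient perturbations in $L^\infty$. Splitting $\mu_Q^h-\mu_Q = (|\det\nabla\Phi^h|-1)(\mu_Q\circ\Phi^h) + (\mu_Q\circ\Phi^h-\mu_Q)$, the first term is $O((h/L)^q)\|\mu_Q\|_{L^\infty}$ by the Jacobian bound in \eqref{eq_bound_Phi}, while the second is $O(h(h/L)^q)\|\nabla\mu_Q\|_{L^\infty}$ by the mean value theorem together with $\|\Phi^h-I\|_{L^\infty}\le Ch(h/L)^q$ from \eqref{eq_bound_Phi} at $s=0$. The analogous decomposition for $A_Q^h-A_Q$, which additionally invokes $\|\nabla\Psi^h-I\|_{L^\infty}\le C(h/L)^q$ from \eqref{eq_bound_Psi}, gives
\beqs
\|\mu_Q-\mu_Q^h\|_{L^\infty(Q)} + \|A_Q-A_Q^h\|_{L^\infty(Q)} \le C\left(\frac{h}{L}\right)^{\!q}\Big(\|\mu_Q\|_{L^\infty}+h\|\nabla\mu_Q\|_{L^\infty}+\|A_Q\|_{L^\infty}+h\|\nabla A_Q\|_{L^\infty}\Big).
\eeqs
Inserting this into the expression for $b-\tb_h$, applying Cauchy--Schwarz, and using $k\|\tv\|_{L^2},\|\nabla\tv\|_{L^2}\le \|\tv\|_{H^1_k(\Omega)}$ then yields the $\Cquad$ bound in \eqref{eq_gammaq_bounds} (after dividing by $\Ccont\sim M$ as in the definition \eqref{eq_definition_Cquad}).

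For $\Cdrhs$ the same pointwise strategy fails because $\data_Q$ is only in $H^1(\mathbb R^d)$, so $\data_Q\circ\Phi^h-\data_Q$ cannot be controlled by a mean value argument in $L^\infty$; this is the main technical obstacle. I would therefore invoke the change-of-variable estimate from Appendix \ref{section_change_variable}, which should yield
\beqs
\|\data_Q-\data_Q^h\|_{L^2(Q)} \le C \left(\frac{h}{L}\right)^{\!q} \Big(\|\data_Q\|_{L^2(\mathbb R^d)}+h\|\nabla \data_Q\|_{L^2(\mathbb R^d)}\Big),
\eeqs
and which can be proved by writing $\data_Q^h-\data_Q$ as a time integral of $\tfrac{d}{dt}\big[(\data_Q\circ\Phi^h_t)|\det\nabla\Phi^h_t|\big]$ along the convex path $\Phi^h_t\eq I+t(\Phi^h-I)$, using the derivative bounds \eqref{eq_bound_Phi} uniformly in $t\in[0,1]$, and extending from smooth $\data_Q$ by density. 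Summing over $Q\in\Part$ via Cauchy--Schwarz and bounding $\|\tw_h\|_{L^2(\Omega)}\le k^{-1}\|\tw_h\|_{H^1_k(\Omega)}$ then produces the second estimate in \eqref{eq_gammaq_bounds}.
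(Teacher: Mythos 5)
Your proposal is correct and follows essentially the same route as the paper: it works in the mapped space $\tV_h$, bounds $\|\mu-\mu^h\|_{L^\infty}$ and $\|A-A^h\|_{L^\infty}$ by the same Jacobian/composition splitting using \eqref{eq_bound_Psi}--\eqref{eq_bound_Phi} and a Lipschitz (Taylor) estimate, and handles the $H^1$ data term exactly as the paper does, via Lemma \ref{lemma_change_variable} with $\varepsilon\le Ch(h/L)^q$ and $\varepsilon'\le C(h/L)^q$, before summing over $Q$ and using $\|\tv_h\|_{L^2(\Omega)}\le k^{-1}\|\tv_h\|_{H^1_k(\Omega)}$.
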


\begin{proof}
We first prove the bound on $\Cquad$, and observe that we only need to show that
\begin{equation}
\label{eq_approximation_coef}
\|\mu-\mu^h\|_{L^\infty(\Omega)}
+
\|A-A^h\|_{L^\infty(\Omega)}
\leq
C \left (
\frac{h}{L}
\right )^qW(\mu,A),
\end{equation}
since then, by the definition of $\tb_h$ in \eqref{eq_tildeb}, 
\begin{equation}
\label{eq_geometric_error}
|b(\tv_h,\tw_h)-\tb_h(\tv_h,\tw_h)|
\leq
C \left (
\frac{h}{L}
\right )^q W(\mu,A)
\|\tv_h\|_{H^1_k(\Omega)}
\|\tw_h\|_{H^1_k(\Omega)}
\end{equation}
for all $\tv_h,\tw_h \in \tV_h$.
The bound on $\Cquad$ then follows immediately from its definition \eqref{eq_definition_Cquad} and \eqref{eq_geometric_error}.

We now show how \eqref{eq_approximation_coef} follows from Assumption \ref{assumption_curved_fem}.
For the coefficient $A$, if $Q \in \Part$ and $K \in \CT_h$ with $\tK \subset \overline{Q}$,
then, by \eqref{eq_AQh} and the assumption that $\det \nabla \Phi^h>0$,
\begin{align*}
A_Q-A_Q^h
&=
A_Q-(\det \nabla \Phi^h) \nabla \Psi^h (A_Q \circ \Phi^h) (\nabla \Psi^h)^T,
\\
&=
(1-\det\nabla\Phi^h)A_Q-\det \nabla \Phi^h \left \{
A_Q-\nabla \Psi^h (A_Q \circ \Phi^h) (\nabla \Psi^h)^T\right \}
\end{align*}
so that
\begin{align}\nonumber
\|A_Q-A_Q^h\|_{L^\infty(\tK)}
&\leq
\|1-\det\nabla\Phi^h\|_{L^\infty(\tK)} \|A_Q\|_{L^\infty(\Rea^d)}
\\
&\hspace{3cm}
+
\|\det \nabla \Phi^h\|_{L^\infty(\tK)}
\|A_Q-\nabla \Psi^h (A_Q \circ \Phi^h) (\nabla \Psi^h)^T\|_{L^\infty(\tK)}
\nonumber
\\
&\leq
C \left (
\left (\frac{h}{L}\right )^q\|A_Q\|_{L^\infty(\tK)}
+
\|A_Q-\nabla \Psi^h (A_Q \circ \Phi^h) (\nabla \Psi^h)^T\|_{L^\infty(\tK)}
\right )
\label{eq_T1}
\end{align}
by the second bound in \eqref{eq_bound_Phi}.
We now work on the second term on the right-hand side of \eqref{eq_T1}.
Specifically, with $\MI$ the $d\times d$ identity matrix,
\begin{align*}
A_Q-\nabla \Psi^h (A_Q \circ \Phi^h) (\nabla \Psi^h)^T
&=
(\MI-\nabla \Psi^h) A_Q
+
\nabla \Psi^h \big[A_Q-(A_Q \circ \Phi^h) (\nabla \Psi^h)^T\big]
\\
&=
(\MI-\nabla \Psi^h) A_Q
+
\nabla \Psi^h \big[A_Q(\MI-(\nabla \Psi_h)^T)\big]
\\
&\hspace{4.5cm}
+
\nabla \Psi^h \big(A_Q-A_Q \circ \Phi^h) (\nabla \Psi^h)^T,
\end{align*}
and therefore
\begin{align}
\|A_Q-\nabla \Psi^h (A_Q \circ \Phi^h) (\nabla \Psi^h)^T\|_{L^\infty(\tK)}
&\leq
\|\MI-\nabla \Psi^h\|_{L^\infty(\tK)} \|A_Q\|_{L^\infty(\Rea^d)}
\nonumber
\\
&+
C\|\nabla \Psi^h\|_{L^\infty(\tK)}\|A_Q\|_{L^\infty(\Rea^d)}
\|\MI-(\nabla \Psi^h)^T\|_{L^\infty(\tK)}
\nonumber
\\
&+
C\|\nabla \Psi^h\|_{L^\infty(\tK)}^2\|A_Q-A_Q \circ \Phi^h\|_{L^\infty(\tK)}
\nonumber
\\
&\leq
C \left (\left (\frac{h}{L}\right )^q \|A_Q\|_{L^\infty(\Rea^d)} +  \|A_Q-A_Q \circ \Phi^h\|_{L^\infty(\tK)}\right )
\label{eq_T2}
\end{align}
by the first bound in \eqref{eq_bound_Psi} with $s=1$. We estimate the remaining term $ \|A_Q-A_Q \circ \Phi^h\|_{L^\infty(\tK)}$ by 
using the mean-value inequality 
and the first bound in \eqref{eq_bound_Phi} with $s=0$ to find that 
\begin{equation}
\label{eq_T3}
\|A_Q-A_Q \circ \Phi^h\|_{L^\infty(\tK)}
\leq
C\|I-\Phi^h\|_{L^\infty(\tK)} \|\nabla A_Q\|_{L^\infty(\Rea^d)}
\leq
C \left (\frac{h}{L}\right )^q h\|\nabla A_Q\|_{L^\infty(\Rea^d)}.
\end{equation}

Combining \eqref{eq_T1}, \eqref{eq_T2}, \eqref{eq_T3}, and observing that 
these bounds are valid for all $K \in \CT_h$, we obtain that 
\begin{equation}
\label{eq_approx_A}
\|A-A^h\|_{L^\infty(\Omega)}
\leq
C \left (
\frac{h}{L}
\right )^q
\max_{Q \in \Part}
\Big(
\|A_Q\|_{L^\infty(\mathbb R^d)} + h\|\nabla A_Q\|_{L^\infty(\mathbb R^d)}
\Big)
\end{equation}
The corresponding result for $\mu$, namely,
\begin{equation}
\label{eq_approx_mu}
\|\mu-\mu^h\|_{L^\infty(\Omega)}
\leq
C \left(
\frac{h}{L}
\right )^q
\max_{Q \in \Part}
\Big (
\|\mu_Q\|_{L^\infty(\mathbb R^d)} + h\|\nabla \mu_Q\|_{L^\infty(\mathbb R^d)}
\Big )
\end{equation}
follows in an almost-identical way; we have therefore proved \eqref{eq_approximation_coef}, and the proof of the bound on $\Cquad$ is complete.

To prove the bound on $\Cdrhs$ \eqref{eq_definition_Cdrhs}, we first observe that, 
by \eqref{eq_discrete_rhs},
\begin{align*}
\langle \widetilde \psi_h, \tv_h \rangle
&=
\sum_{Q \in \Part}
\int_Q  \det(\nabla \Phi^h) (\data_Q \circ \Phi^h) \tv_h
\\
&
=
\sum_{Q \in \Part}
\int_Q \big(\det(\nabla \Phi^h)-1\big) (\data_Q \circ \Phi^h) \tv_h
+
\sum_{Q \in \Part}
\int_Q \big(
\data_Q \circ \Phi^h -\data_Q
\big) \tv_h
+
\sum_{Q \in \Part}
\int_Q \data_Q \tv_h,
\end{align*}
so that, by \eqref{eq_discrete_rhs2}, 
\begin{equation}\label{eq_car1}
\langle \widetilde \psi_h-\psi_h,\tv_h \rangle
=
\sum_{Q \in \Part}
\int_Q \big(\det(\nabla \Phi^h)-1\big) (\data_Q \circ \Phi^h) \tv_h
+
\sum_{Q \in \Part}
\int_Q \big(
\data_Q \circ \Phi^h -\data_Q
\big)\tv_h.
\end{equation}
For a fixed $Q \in \Part$,
\begin{align}\nonumber
\left|\int_Q (\det \nabla \Phi^h-1) (\data_Q \circ \Phi^h)\tv_h\right|
&=
\left |
\sum_{\substack{K \in \CT_h \\ \tK \subset Q}}
\int_{\tK} \big(\det \nabla \Phi^h_K-1\big) \big(\data_Q \circ \Phi^h_K\big) \tv_h
\right |
\\ \nonumber
&\leq
\sum_{\substack{K \in \CT_h \\ \tK \subset Q}}
\|\det(\nabla \Phi^h_K)-1\|_{L^\infty(\tK)}
\|\data_Q \circ \Phi^h_K\|_{L^2(\tK)} \|\tv_h\|_{L^2(\tK)}
\\
&\leq
C\left (\frac{h}{L}\right )^q
\|\data_Q\|_{L^2(\mathbb R^d)} \|\tv_h\|_{L^2(Q)}\label{eq_car2}
\end{align}
by the second bound in \eqref{eq_bound_Phi}. 
Similarly, by
Lemma \ref{lemma_change_variable} and the first bound in  \eqref{eq_bound_Phi} with $s=0$ and $s=1$, 
\begin{align}\label{eq_car3}
\left|\int_Q \big(\data_Q \circ (\Phi^h-I)\big) \tv_h\right|
&\leq
C\left (\frac{h}{L}\right )^{q}
h
\|\nabla \data_Q\|_{L^2(\mathbb R^d)}\|\tv_h\|_{L^2(Q)}.
\end{align}
Using \eqref{eq_car2} and \eqref{eq_car3} in \eqref{eq_car1}, we obtain
\begin{align*}
|\langle \widetilde \psi_h-\psi_h,\tv_h \rangle|
&\leq
C \left (\frac{h}{L}\right )^q
\bigg(
\sum_{Q \in \Part}
\|\data_Q\|_{L^2(\mathbb R^d)}^2 + h^2\|\nabla \data_Q\|_{L^2(\mathbb R^d)}^2
\bigg)^{1/2}
\|\tv_h\|_{L^2(\Omega)}\\
&\leq
\frac{C}{k} \left (\frac{h}{L}\right )^q
\bigg(
\sum_{Q \in \Part}
\|\data_Q\|_{L^2(\mathbb R^d)}^2 + h^2\|\nabla \data_Q\|_{L^2(\mathbb R^d)}^2
\bigg )^{1/2}
\|\tv_h\|_{H^1_k(\Omega)}.
\end{align*}
The second bound in \eqref{eq_gammaq_bounds} then follows from the definition of $\Cdrhs$ \eqref{eq_definition_Cdrhs}.
\end{proof}

%
%

\subsection{Bounds on $\Caprx$ \eqref{eq_definition_Caprx} and $\Caprxop$ \eqref{eq_definition_Caprxop}}\label{sec:aprxop}

The next result uses the following $k$-weighted norms. 
For a Lipschitz domain $D$, let 
\beq\label{eq_weighted}
\N{u}^2_{H^m_k(D)}:= \sum_{|\alpha|\leq m} k^{2(1-|\alpha|)} \N{\partial^\alpha u}^2_{L^2(D)}
\quad \tfor m\in\mathbb{Z}^+
\eeq
(observe that this definition is equivalent to \eqref{eq_norm} when $m=1$) 
and let 
\beq\label{eq_weighted2}
\N{u}^2_{H^m_k(\Part)} := \sum_{Q\in \Part} \N{u}^2_{H^m_k(Q)}\quad \tfor m\in\mathbb{Z}^+.
\eeq

\begin{lemma}[Existence of a smoothing $\operator$ satisfying the G\aa rding-type inequality \eqref{eq_Garding}]\label{lem_GS}
Suppose that 
$A$ satisfies Assumption \ref{ass_strong_ellipticity} and
$A$, $\mu$, and $\partial\Omega$ satisfy the regularity requirements in Assumption \ref{ass_regularity},
for some $\ell \in \mathbb{Z}^+$. Let $b$ be the sesquilinear form \eqref{eq_actual_b}.

Then there exists a self-adjoint $\operator :L^2(\Omega)\to L^2(\Omega)$ such that given $k_0>0$ there exists $C>0$ such that
the G\aa rding-type inequality \eqref{eq_Garding} is satisfied and 
$\operator : L^2(\Omega)\to H^{\ell+1}(\Part)\cap H^1_{\GD}(\Omega)$ 
with 
\beq\label{eq_operator_regularity}
\N{\operator}_{L^2(\Omega)\to L^2(\Omega)}\leq C 
\quad\tand\quad
\N{\operator }_{L^2(\Omega)\to H^n_k(\Part)} \leq Ck \quad\tfa k\geq k_0, \quad 1\leq n\leq \ell+1.
\eeq
Furthermore $\tsol$ defined by \eqref{eq_tsol} satisfies $\tsol : H^{\ell-1}(\Part)\cap H^1_{\GD}(\Omega)
\to H^{\ell+1}(\Part)$ with 
\beq\label{eq_tsol_regularity}
\big\|\tsol\big\|_{H^{\ell-1}_k(\Part)\to H^{\ell+1}_k(\Part)} \leq C k^{-2} \quad \tif \ell \geq 2,
\eeq
and 
\beq\label{eq_tsol_regularity2}
\big\|\tsol\big\|_{L^2(\Omega)\to H^{2}_k(\Part)} \leq C k^{-1} \quad \tif \ell =1.
\eeq
\end{lemma}

We now 
proceed with the operator $\operator$ introduced in Lemma \ref{lem_GS}.

\begin{theorem}[Bounds on $\Caprx$ \eqref{eq_definition_Caprx} and  $\Caprxop$ \eqref{eq_definition_Caprxop}]\label{thm_main_event1}
Suppose that 
$A$ satisfies Assumption \ref{ass_strong_ellipticity}, that $A$, $\mu$, and $\partial\Omega$ satisfy the regularity requirements in Assumption \ref{ass_regularity} for some $\ell \in \mathbb{Z}^+$, and 
that $V_h$ satisfies Assumption \ref{assumption_curved_fem} for some $p\leq \ell$. Suppose that $kh\leq C_0$ for some $C_0>0$. 
Then given $k_0>0$ there exists $C>0$ such that, for $k\geq k_0$, with $\Caprx$ defined by \eqref{eq_definition_Caprx} with the finite-element space equal to $\widetilde{V}_h$ and the operator $\operator$ given by Lemma \ref{lem_GS},
\beq\label{eq_Cstab_bound}
\Cstab\leq C k^\alpha,
\eeq
\beq\label{eq_Caprx_bound}
\Caprx \leq C k^\alpha(kh)^p,
\eeq
and
%
%
%
\beq\label{eq_definition_Caprxop_bound}
 \Caprxop
 \leq C (kh)^p.
\eeq
\end{theorem}

\bpf[Proof of the bound \eqref{eq_Cstab_bound} using Lemma \ref{lem_GS}]
Since $\Cstab := k \| \asol \operator\|_{L^2(\Omega)\to H^1_k(\Omega)}$ \eqref{eq_definition_Cstab}, 
if we can show that 
\beq\label{eq_asol_bound}
\| \asol\|_{L^2(\Omega)\to H^1_k(\Omega)}\leq C k^{\alpha-1},
\eeq
 then 
the bound \eqref{eq_Cstab_bound} follows from the combination of \eqref{eq_asol_bound} and the first bound in 
\eqref{eq_operator_regularity}. Since the $L^2\to L^2$ norm of the adjoint solution operator equals the $L^2\to L^2$ norm of the solution operator (by properties of the adjoint), the solution-operator bound \eqref{eq:alpha} implies that $\| \asol\|_{L^2(\Omega)\to L^2(\Omega)}\leq C k^{\alpha-2}$; the bound \eqref{eq_asol_bound} then follows from the G\aa rding inequality \eqref{eq_Garding_standard} and the first bound in \eqref{eq_operator_regularity}.
\epf

\

\bpf[Proof of the bound \eqref{eq_Caprx_bound} using Lemma \ref{lem_GS}]
When $p\geq 2$, we claim that it is sufficient to prove that
\beq\label{eq_STP1}
\big\| \asol\big\|_{H^{p-1}_k(\Part) \to H^{p+1}_k(\Part)}\leq C k^{\alpha-2}.
\eeq
(the temporary exclusion of the case $p=1$ is because the $H^m_k$ norm defined in \eqref{eq_weighted} equals $k$ times the $L^2$ norm when $m=0$, and so we need to treat separately the case when $\asol$ acts on $L^2$).
Indeed, by the definition of $\Caprx$ \eqref{eq_definition_Caprx}, the interpolation bound \eqref{eq_mapped_interpolant},
 the second bound in \eqref{eq_operator_regularity} with $n=p-1$, 
  the definition of the weighted norm \eqref{eq_weighted2}, and \eqref{eq_STP1}, 
\begin{align*}
\Caprx:= k\big\| (I-\pi_h) \asol \operator\big\|_{L^2(\Omega)\to H^1_k(\Omega)} &
\leq k C (1+kh) h^{p} \big\| \asol\big\|_{H^{p-1}_k(\Part)\to H^{p+1}(\Part)}k
\\
&\leq k C (1+kh) (kh)^p k^{\alpha-2} 
 k \leq C (kh)^p k^\alpha. 
\end{align*}
We therefore now prove \eqref{eq_STP1}. Given $\phi\in L^2(\Omega)$, by the definition of $\asol$ \eqref{eq_tsol}, $v:= \asol\phi \in H^1_{\GD}(\Omega)$ is the solution of the equation 
\beqs
- k^2 \overline{\mu} v-\nabla\cdot\big(\overline{A}^T\nabla v\big) =\phi.
\eeqs
Since $A,\mu$, and $\partial\Omega$ satisfy Assumption \ref{ass_regularity}, 
by elliptic regularity (see, e.g., \cite[Theorem 4.20]{Mc:00}), the
bound on $\asol$ \eqref{eq_asol_bound},
and the definition of $\|\cdot\|_{H^1_k(\Omega)}$ \eqref{eq_norm},
for $1\leq m\leq \ell-1$,
\begin{align*}
|v|_{H^{m+2}(\Part)}\leq C \Big( \big\|\nabla \cdot \big( \overline{ A}^T\nabla v \big)\big\|_{H^m(\Part)}+
\N{v}_{H^1(\Omega)}
\Big)
&\leq C\Big( \big\| k^2\overline{\mu} v + \phi\big\|_{H^m(\Part)}+
\N{v}_{H^1(\Omega)}
\Big)\\
&\leq C\Big( k^2 \N{v}_{H^m(\Part)}+ \N{\phi}_{H^m(\Part)}+k^{\alpha-1} \N{\phi}_{L^2(\Omega)}\Big).
\end{align*}
After multiplying by $k^{-m-1}$ and using the definitions of the weighted norms \eqref{eq_weighted} and \eqref{eq_weighted2}, we obtain that 
\beq\label{eq_ER2a}
k^{-m-1}|v|_{H^{m+2}(\Part)} \leq C \Big( \N{v}_{H^m_k(\Part)} + k^{-2} \N{\phi}_{H^m_k(\Part)}+k^{\alpha-2} \N{\phi}_{L^2(\Omega)}\Big)\quad\tfor
1\leq m\leq \ell-1.
\eeq
By iteratively applying \eqref{eq_ER2a}, 
we obtain that $\N{v}_{H^{n+2}_k(\Part)}\leq C k^{\alpha-2} \N{\phi}_{H^{n}_k(\Part)}$ for all $1\leq n\leq \ell-1$; 
since $p\leq \ell$, this last bound with $n=p-1$ implies the desired bound \eqref{eq_STP1} when $p\geq 2$.

The proof for the case $p=1$ is very similar, with \eqref{eq_STP1} replaced by 
\beq\label{eq_STP2}
\big\| \asol\big\|_{L^2(\Omega) \to H^{2}_k(\Part)}\leq C k^{\alpha-1},
\eeq
and the additional power of $k$ ``won back" by using the first bound in \eqref{eq_operator_regularity} instead of the second.
\epf

\

\bpf[Proof of the bound \eqref{eq_definition_Caprxop_bound} using Lemma \ref{lem_GS}]
By \eqref{eq_tpi_pi}, \eqref{eq_mapped_interpolant}, and the definition of the weighted norm \eqref{eq_weighted2},
\begin{align*}
\big\|(I-\tpi_h)v\big\|_{H^1_k(\Omega)}
&\leq C (1+kh) h^{p} \N{v}_{H^{p+1}(\Part)}\\
&\leq 
C (1+kh) (kh)^{p} \N{v}_{H^{p+1}_k(\Part)}
\quad\tfa v \in H^{p+1}(\Part)\cap H^1_{\GD}(\Omega).
\end{align*}
Combining this with \eqref{eq_tsol_regularity} and the second bound in
\eqref{eq_operator_regularity}, and recalling that $p\leq \ell$, we obtain that,
when $\ell\geq 2$,
\begin{align*}
\Caprxop& := k 
\big\|
(I-\tpi_h) \tsol\operator\big\|_{L^2(\Omega)\to H^1_k(\Omega)}\\
&\leq 
k\big\|I-\tpi_h\big\|_{H^{\ell+1}(\Part)\to H^1_k(\Omega)} \big\| \tsol\big\|_{H^{\ell-1}_k(\Part)\to H^{\ell+1}_k(\Part)}
 \big\|\operator\big\|_{L^2(\Omega)\to H^{\ell-1}_k(\Part)}\\
& \leq C k(1+kh)(kh)^\ell k^{-2}k;
\end{align*}
the result \eqref{eq_definition_Caprxop_bound} when $\ell\geq 2$ then follows.
The result when $\ell=1$ follows in a similar way using \eqref{eq_tsol_regularity2}
instead of \eqref{eq_tsol_regularity}, and with the additional power of $k$ again
``won back'' by using the first bound in \eqref{eq_operator_regularity} instead of the second.
\epf

\

\bpf[Proof of Lemma \ref{lem_GS}]
This result was first proved in \cite[Lemma 2.1]{GaSp:23}. For completeness, we give a (slightly different) proof here.  
Observe that Assumption \ref{ass_strong_ellipticity} implies that
$\Re A$ is a real, symmetric, positive definite matrix-valued function.
Combining this with the uniform positivity of $\Re \mu$ and
the spectral theorem (see, e.g., \cite[Theorem 2.37]{Mc:00}), we see that
there exist $\{\lambda_j^2\}_{j=1}^\infty$ and $\{ u_j\}_{j=1}^\infty$ with 
$\lambda^2_1 \leq \lambda^2_2 \leq \ldots$ with $0\leq \lambda_j^2 \to \infty$ as $j\to \infty$ 
and $u_j \in H^1_{\GD}(\Omega)$ with 
\beqs
( u_j, u_\ell )_{\Re \mu} := \big( (\Re \mu) u_j ,u_\ell \big)_{L^2(\Omega)} = \delta_{j \ell}
\eeqs
such that 
\beq\label{eq_eigenvalue1}
\big( (\Re A) \nabla u_j ,\nabla w\big)_{L^2(\Omega)} = \lambda^2_j \big( (\Re \mu) u_j, w\big)_{L^2(\Omega)} \quad \tfa w\in H^1_{\GD}(\Omega)
\eeq
i.e., 
\beq\label{eq_eigenvalue2}
- \nabla\cdot\big( (\Re A) \nabla u_j \big) = \lambda_j^2 (\Re \mu)u_j,
\eeq
and
\beq\label{eq_expansion}
v = \sum_{j=1}^\infty \big( v, u_j\big)_{\Re \mu} u_j \quad \tfa v \in L^2(\Omega).
\eeq
We record immediately the consequences of  \eqref{eq_expansion} and \eqref{eq_eigenvalue1} that 
\beq\label{eq_Parseval}
\big( (\Re\mu) v,v\big)_{L^2(\Omega)} = \sum_{j=1}^\infty \big|\big( v, u_j\big)_{\Re \mu} \big|^2
\quad\tand\quad
\big( (\Re A) \nabla v,\nabla v\big)_{L^2(\Omega)} = \sum_{j=1}^\infty \lambda_j^2 \big|\big( v, u_j\big)_{\Re \mu} \big|^2,
\eeq
respectively. 
Let 
\beq\label{eq_operator_def}
\widetilde\operator v := \sum_{\substack{j=1 \\\lambda_j^2\leq 2k^2}}^\infty \big( v, u_j\big)_{\Re \mu} u_j \quad \tfa v \in L^2(\Omega).
\eeq
and let 
\beq\label{eq_convert}
\operator := \Big(\sup_{x \in \Omega} \big(\Re \mu(x) \big)^{1/2} \Big)\, \widetilde\operator.
\eeq
Since $\widetilde\operator^2=\widetilde\operator$, the definition \eqref{eq_operator_def} implies that $\widetilde\operator$ has norm one in the $L^2$ norm weighted with $\Re \mu$, and thus the $L^2(\Omega)\to L^2(\Omega)$ bound in \eqref{eq_operator_regularity} holds.

We next prove that the $L^2(\Omega)\to H^n_k(\Omega)$ bound in \eqref{eq_operator_regularity} holds with $1\leq n\leq \ell+1$. 
Since $A,\mu$, and $\partial\Omega$ satisfy Assumption \ref{ass_regularity}, 
by elliptic regularity (see, e.g., \cite[Theorem 4.20]{Mc:00}), 
\begin{align*}
\N{u_j}_{H^2(\Part)} &
\leq C \Big( \N{ \nabla\cdot \big( (\Re A) \nabla u_j\big) }_{L^2(\Omega)} + \N{u}_{H^1(\Omega)}\Big)\leq C \Big( \lambda_j^2 \N{u_j}_{L^2(\Omega)}  + \N{u}_{H^1(\Omega)}\Big).
\end{align*}
By \eqref{eq_eigenvalue1} with $w=u_j$, $\|u_j\|_{H^1(\Omega)}\leq C|\lambda_j| \N{u_j}_{L^2(\Omega)}$, and thus 
\beq\label{eq_H2}
\N{u_j}_{H^2(\Part)} \leq C \lambda_j^{2}\N{u_j}_{L^2(\Omega)}.
\eeq
The bound \eqref{eq_H2} and interpolation (see, e.g. \cite[Theorems B.2 and B.8]{Mc:00})
imply that 
\beq\label{eq_H1}
\N{u_j}_{H^1(\Part)} \leq C |\lambda_j|\N{u_j}_{L^2(\Omega)}.
\eeq
By elliptic regularity, \eqref{eq_eigenvalue2}, and \eqref{eq_H1}, for $0\leq m\leq \ell+1$, 
\begin{align}\nonumber
\N{u_j}_{H^{m+2}(\Part)} &\leq C\Big(  \N{ \nabla\cdot \big( (\Re A) \nabla u_j\big) }_{H^m(\Part)} + \N{u_j}_{H^1(\Omega)}\Big)\\
&\leq C \Big( \lambda_j^2 \N{u_j}_{H^m(\Part)} + \N{u_j}_{H^1(\Omega)}\Big)\leq C \lambda_j^2\N{u_j}_{H^m(\Part)}\label{eq_ER1}
\end{align}
(where the upper limit of $m=\ell+1$ is determined by the regularity of $A,\mu,$ and $\partial \Omega$ in Assumption \ref{ass_regularity}).
The combination of \eqref{eq_H2}, \eqref{eq_H1}, and \eqref{eq_ER1} imply that 
\beq\label{eq_email1}
\N{u_j}_{H^{n}(\Part)} \leq C |\lambda_j|^{n}\N{u_j}_{L^2(\Omega)} \quad\tfa 0\leq n\leq \ell+1.
\eeq
Therefore, by the definition of $\widetilde\operator$ \eqref{eq_operator_def}, \eqref{eq_email1}, and \eqref{eq_Parseval},
\beqs
\big\|\widetilde\operator v\big\|^2_{H^{n}(\Part)} \leq C \sum_{\substack{j=1\\ \lambda_j^2 \leq 2k^2}} \lambda_j^{2n} \big| (v,u_j)_{\Re \mu}\big|^2 
\leq C k^{2n} \N{v}_{L^2(\Omega)}^2 \quad\tfa 0\leq n\leq \ell+1;
\eeqs
the bound \eqref{eq_operator_regularity} then follows $0\leq n\leq \ell+1$ by the definition  of the weighted norm \eqref{eq_weighted2}.

To prove the G\aa rding-type inequality \eqref{eq_Garding}, observe that, by \eqref{eq_Parseval} and the definition of $\widetilde\operator$ \eqref{eq_operator_def},
\begin{align}\nonumber
\Re \big(b(v,v)\big) + 2k^2 \big( \widetilde\operator v, \widetilde\operator v\big)_{\Re \mu}
&= (\Re b)(v,v) + 2k^2 \big( \widetilde\operator v, v\big)_{\Re \mu},\\ \nonumber
&\hspace{-1cm}= \sum_{j=1}^\infty\big( \lambda_j^2 -k^2\big) \big| (v,u_j)_{\Re \mu}\big|^2 
+2k^2\sum_{\substack{j=1\\ \lambda_j^2 \leq 2k^2}}^\infty  \big| (v,u_j)_{\Re \mu}\big|^2,\\
&\hspace{-1cm}= \sum_{\lambda_j^2 \leq 2k^2}\big( \lambda_j^2 +k^2\big) \big| (v,u_j)_{\Re \mu}\big|^2 
+\sum_{\lambda_j^2 > 2k^2} \big(\lambda_j^2 -k^2) \big| (v,u_j)_{\Re \mu}\big|^2.\label{eq_the_end1}
\end{align}
Now, 
\beq\label{eq_the_end2}
\text{ if } \quad\lambda_j^2 \geq 2k^2 \quad\text{ then }\quad (\lambda_j^2 -k^2) \geq \frac{1}{2}\lambda_j^2 \geq \frac{1}{4} (\lambda_j^2 +k^2).
\eeq
The definition \eqref{eq_operator_def} implies that
\beq\label{eq_last_one}
\big\|\operator v\big\|_{L^2(\Omega)} \geq \big\|\widetilde\operator v \big\|_{\Re \mu} \quad\tfa v \in L^2(\Omega).
\eeq
Therefore, combining \eqref{eq_last_one}, \eqref{eq_the_end1}, and \eqref{eq_the_end2}, we obtain that, for all $v\in H^1_{\GD}(\Omega)$,
\begin{align*}
\Re \big(b(v,v)\big) + 2k^2\N{ \operator v}^2_{L^2(\Omega)}
&\geq
\Re \big(b(v,v)\big) + 2k^2\big\|\widetilde{\operator} v\big\|^2_{\Re \mu}\\
&\geq 
\frac{1}{4}
 \sum_{j=1}^\infty\big( \lambda_j^2 +k^2\big) \big| (v,u_j)_{\Re \mu}\big|^2 \\
& \geq \frac{1}{4} \Big( \big( (\Re A) \nabla v,\nabla v\big)_{L^2(\Omega)}+ k^2 \big( (\Re\mu)v,v\big)_{L^2(\Omega)}\Big)
\geq \Ccoer \N{v}^2_{H^1_k(\Omega)},
\end{align*}
which is \eqref{eq_Garding} with
\beqs
\Ccoer := \frac{1}{4} \min \Big\{ \inf_{x\in\Omega}\big((\Re A)_*(x)\big)
,  \inf_{x\in\Omega}(\Re \mu(x)) \Big\}.
\eeqs
where, for $x\in \Omega$, $(\Re A)_*(x):= \inf_{\xi \in \mathbb{R}^d} (\Re A)(x)_{ij}\xi_i\xi_j$.

To complete the proof, we need to prove the bounds \eqref{eq_tsol_regularity}
and \eqref{eq_tsol_regularity2}; these are proved using elliptic regularity in
a very similar way to the proof of the bound \eqref{eq_Caprx_bound} using Lemma \ref{lem_GS}. 
First observe that, by coercivity of $\newb $, the Lax--Milgram lemma (see, e.g., \cite[Lemma 2.32]{Mc:00}) and the definitions of $\|\cdot\|_{(H^1_k(\Omega))^*}$ and $\|\cdot\|_{H^1_k(\Omega)}$, 
\beq\label{eq_tsol_bounds}
\big\| \tsol\big\|_{(H^1_k(\Omega))^* \to H^1_k(\Omega)} \leq C,
\qquad 
\big\| \tsol\big\|_{L^2(\Omega) \to H^1_k(\Omega)} \leq Ck^{-1}
\quad\tand\quad
\big\| \tsol\big\|_{L^2(\Omega) \to L^2(\Omega)} \leq Ck^{-2}.
\eeq
Given $\phi\in L^2(\Omega)$, by the definition of $\tsol$ \eqref{eq_tsol}, $v:= \tsol\phi \in H^1_{\GD}(\Omega)$ is the solution of the equation 
\beqs
- k^2  \mu v-\nabla\cdot\big(A\nabla v\big)  + 2k^2 \operator^{2}  v =\phi.
\eeqs
Since $A,\mu$, and $\partial\Omega$ satisfy Assumption \ref{ass_regularity}, 
by elliptic regularity (see, e.g., \cite[Theorem 4.20]{Mc:00}) 
the second bound in \eqref{eq_operator_regularity}, and the second bound in \eqref{eq_tsol_bounds}, 
 for $0\leq m\leq \ell-1$,
\begin{align}\nonumber
|v|_{H^{m+2}(\Part)} \leq C \Big( \N{\nabla \cdot \big( A\nabla v \big)}_{H^m(\Part)}+
\N{v}_{H^1(\Omega)}
\Big)
&\leq C\Big( \N{-2k^2 \operator^{2} v + k^2  \mu v + \phi}_{H^m(\Part)}+
\N{v}_{H^1(\Omega)}
\Big)\\ \nonumber
&\hspace{-0.5cm}\leq C\Big( k^{2+m} \N{v}_{L^2(\Omega)} +k^2 \N{v}_{H^m(\Part)}+ \N{\phi}_{H^m(\Part)}\Big) 
\\
&\hspace{-0.5cm} \leq C\Big( k^m \N{\phi}_{L^2(\Omega)} + k^2 \N{v}_{H^m(\Part)} + \N{\phi}_{H^m(\Part)}\Big).
\label{eq_really_the_end}
\end{align}
After multiplying by $k^{-m-1}$ and using the definitions of the weighted norms \eqref{eq_weighted} and \eqref{eq_weighted2}, we obtain that 
\beq\label{eq_ER2}
k^{-m-1}|v|_{H^{m+2}(\Part)} \leq C \Big( \N{v}_{H^m_k(\Part)} + k^{-2} \N{\phi}_{H^m_k(\Part)}\Big)\quad\tfor
1\leq m\leq \ell-1.
\eeq
By iteratively applying \eqref{eq_ER2}, and then using either  the second and third bounds in \eqref{eq_tsol_bounds} (depending on whether $m$ is odd or even), we obtain that $\N{v}_{H^{n+2}_k(\Part)}\leq k^{-2} \N{\phi}_{H^{n}_k(\Part)}$ for all $1\leq n\leq \ell-1$; 
with $n=\ell-1$ this is
 the result \eqref{eq_tsol_regularity} for $\ell\geq 2$. The bound \eqref{eq_tsol_regularity} for $\ell=1$ follows from taking $m=0$ in \eqref{eq_really_the_end}, multiplying by $k^{-1}$, and then using 
the bounds in \eqref{eq_tsol_bounds}.
\epf

\subsection{Transferring the error bound from the mapped finite-element space to the true finite-element space}\label{sec:mapped2}

With Theorems \ref{thm_gammaq} and \ref{thm_main_event1}, 
we have all the tools needed to apply the abstract framework of
Section \ref{sec:abstract} to obtain an error estimate for
the abstract mapped finite-element solution, i.e., a bound on
$\|u-\tu_h\|_{H^1_k(\Omega)}$. We now provide an additional set of
results  to give an error estimate for the ``concrete''
finite element solution $u_h$. Since we can expect that $K = \tK$
for the elements $K \in \CT_h$ not touching an interface, the
error bound for $\tu_h$ already gives an error bound for $u_h$
on those elements, but we can be a bit more precise. To do so,
we need the following additional assumption that essentially
requires that for the elements $K \in \CT_h$ not touching an
interface, $\Psi^h_K = I$ ; see Figure~\ref{figure_pathology}.

\begin{assumption}[Assumption of the behaviour of $\Psi^h$ inside $Q$]
\label{assumption_straight_fem}
For all $Q \in \Part$, 
\begin{equation*}
Q \cap Q^h = \bigcup_{\substack{K \in \CT_h \\ K \subset \overline{Q^h}}} \tK \cap K.
\end{equation*}
\end{assumption}

\input{figures/figure_pathology}

Under Assumption \ref{assumption_straight_fem}, we can transform
an error estimate on $\tu_h$ into an error estimate on $u_h$.

\begin{lemma}[Error transfer]\label{lem_error_transfer}
Let $v \in H^1_{\GD}(\Omega)$ and $v_h \in V_h$. Then, for all $Q \in \Part$, 
\begin{equation}
\label{eq_transform_error}
\|v-v_h\|_{H^1_k(Q \cap Q^h)}
\leq
C \left (
\|v-\tv_h\|_{H^1_k(Q)}
+
\left (\frac{h}{L}\right )^q\|v\|_{H^1_k(Q)}
\right).
\end{equation}
\end{lemma}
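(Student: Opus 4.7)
The plan is to split $v - v_h$ on $Q \cap Q^h$ via the triangle inequality:
\begin{equation*}
\|v - v_h\|_{H^1_k(Q \cap Q^h)} \leq \|v - \tv_h\|_{H^1_k(Q \cap Q^h)} + \|\tv_h - v_h\|_{H^1_k(Q \cap Q^h)}.
\end{equation*}
Since $Q \cap Q^h \subset Q$, the first term is immediately bounded by $\|v - \tv_h\|_{H^1_k(Q)}$, matching the first term on the right-hand side of \eqref{eq_transform_error}. The remaining work is therefore to show that
\begin{equation*}
\|\tv_h - v_h\|_{H^1_k(Q \cap Q^h)} \leq C (h/L)^q \|v_h\|_{H^1_k(Q^h)},
\end{equation*}
since then a change of variables using \eqref{eq_bound_Psi}--\eqref{eq_bound_Phi} gives $\|v_h\|_{H^1_k(Q^h)} \leq C \|\tv_h\|_{H^1_k(Q)} \leq C(\|v - \tv_h\|_{H^1_k(Q)} + \|v\|_{H^1_k(Q)})$, and absorbing the $(h/L)^q \|v - \tv_h\|_{H^1_k(Q)}$ contribution into the first term on the right-hand side of \eqref{eq_transform_error} (valid since $(h/L)^q \leq 1$) concludes the argument.

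To prove the claim, I would use Assumption \ref{assumption_straight_fem} to decompose
\begin{equation*}
\|\tv_h - v_h\|_{H^1_k(Q \cap Q^h)}^2 = \sum_{\substack{K \in \CT_h \\ K \subset \overline{Q^h}}} \|\tv_h - v_h\|_{H^1_k(\tK \cap K)}^2,
\end{equation*}
and then work element-by-element. On each $\tK \cap K$ we have $\tv_h - v_h = v_h \circ \Phi^h_K - v_h$; since $v_h \circ \MAP_K^{-1}$ is a polynomial on $\hK$, $v_h$ admits a smooth extension beyond $K$ via this polynomial representation, so a pointwise Taylor expansion along the segment from $x$ to $\Phi^h_K(x)$ is justified. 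This yields
\begin{equation*}
|v_h \circ \Phi^h_K - v_h|(x) \leq \|\Phi^h_K - I\|_{L^\infty(\tK)} \int_0^1 |\nabla v_h(\gamma_t(x))| \, dt,
\end{equation*}
with $\gamma_t(x) := (1-t)\Phi^h_K(x) + tx$. A similar computation for the gradient gives
\begin{equation*}
|\nabla(v_h \circ \Phi^h_K - v_h)| \lesssim \|I - \nabla \Phi^h_K\|_{L^\infty(\tK)} |\nabla v_h \circ \Phi^h_K| + \|\Phi^h_K - I\|_{L^\infty(\tK)} \sup_t |\nabla^2 v_h(\gamma_t)|.
\end{equation*}

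Squaring, integrating over $\tK \cap K$, applying Minkowski's inequality in $t$, and changing variables (using that $|\det \nabla \gamma_t|$ is bounded above and below by constants thanks to \eqref{eq_bound_Phi} and $|t| \leq 1$) reduces everything to norms of $v_h$ on $K$. Combining the bounds
\begin{equation*}
\|\Phi^h_K - I\|_{L^\infty(\tK)} \leq C(h/L)^q h_K, \qquad \|I - \nabla \Phi^h_K\|_{L^\infty(\tK)} \leq C(h/L)^q
\end{equation*}
from Assumption \ref{assumption_curved_fem} with the inverse inequality $\|\nabla^2 v_h\|_{L^2(K)} \leq C h_K^{-1} \|\nabla v_h\|_{L^2(K)}$ (valid since $v_h \circ \MAP_K^{-1}$ is a polynomial of fixed degree $p$ and $\MAP_K$ satisfies \eqref{eq_mapK}) yields $\|\tv_h - v_h\|_{H^1_k(\tK \cap K)} \leq C(h/L)^q \|v_h\|_{H^1_k(K)}$ after using $kh \lesssim 1$ to absorb an extra factor of $kh$ in the $L^2$ part. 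Summing over $K \subset \overline{Q^h}$ then gives the claim.

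The main obstacle, I expect, is the Taylor expansion along $\gamma_t$: strictly speaking $v_h$ is only defined on $K$, and the segment from $x$ to $\Phi^h_K(x)$ need not stay in $K$ when $K$ is curved. This is resolved by working through the reference element, i.e., by pulling back via $\MAP_K$ so that $v_h \circ \MAP_K^{-1}$ is a polynomial defined on all of $\Rea^d$, and estimating the corresponding error on $\hK$ before mapping back; the bounds \eqref{eq_mapK} on $\MAP_K$ then control all the constants uniformly. Everything else is bookkeeping with change of variables and inverse inequalities.
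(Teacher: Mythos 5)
Your argument is correct and essentially the paper's: triangle inequality, the elementwise decomposition via Assumption \ref{assumption_straight_fem}, a mean-value/Taylor estimate driven by the closeness of $\Phi^h_K$ to the identity in \eqref{eq_bound_Phi}, and polynomial inverse estimates; the paper simply runs the elementwise step with $L^\infty$ bounds on $\nabla v_h$, $\nabla^2 v_h$ and the scaling Lemma \ref{lem_scaling} rather than your line-integral/Minkowski version, and it too implicitly faces the issue that the segment from $x$ to $\Phi^h_K(x)$ need not stay in a curved $K$ -- your reference-element repair is the right one (the key point being convexity of $\hK$ and that the pulled-back endpoints are only $O((h/L)^q)$ apart, rather than the polynomial being globally defined). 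One small discrepancy: you bound the zeroth-order term by $(h/L)^q h_K\|\nabla v_h\|_{L^2}$ and then absorb the resulting $kh$ factor by assuming $kh\lesssim 1$, which is not a hypothesis of the lemma; this is removed either by the inverse estimate $h_K\|\nabla v_h\|_{L^2(K)}\leq C\|v_h\|_{L^2(K)}$ or by arguing as the paper does, which lands directly on $\|v_h\|_{L^2(K)}$, so the bound holds with no mesh-size restriction.
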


\begin{proof}
If we can prove that 
\begin{equation}\label{eq_lastday5}
\|\tv_h-v_h\|_{H^1_k(Q \cap Q^h)} \leq C \left(\frac{h}{L}\right)^q\|\tv_h\|_{H^1_k(Q \cap Q^h)},
\end{equation}
then the result \eqref{eq_transform_error} follows by the triangle inequality:
\begin{align*}
\|v-v_h\|_{H^1_k(Q \cap Q^h)}
&\leq
\|v-\tv_h\|_{H^1_k(Q \cap Q^h)}
+
\|\tv_h-v_h\|_{H^1_k(Q \cap Q^h)}
\\
&\leq
\|v-\tv_h\|_{H^1_k(Q \cap Q^h)}
+
C \left (\frac{h}{L}\right )^q
\|\tv_h\|_{H^1_k(Q \cap Q^h)}
\\
&\leq
\left (1 + C \left (\frac{h}{L}\right )^q\right )
\|v-\tv_h\|_{H^1_k(Q \cap Q^h)}
+
C \left (\frac{h}{L}\right )^q
\|v\|_{H^1_k(Q \cap Q^h)}
\\
&\leq
C \left (
\|v-\tv_h\|_{H^1_k(Q \cap Q^h)}
+
\left (\frac{h}{L}\right )^q
\|v\|_{H^1_k(Q \cap Q^h)}
\right )
\\
&\leq
C \left (
\|v-\tv_h\|_{H^1_k(Q)}
+
\left (\frac{h}{L}\right )^q
\|v\|_{H^1_k(Q)}
\right ).
\end{align*}
We now prove \eqref{eq_lastday5}; by Assumption \ref{assumption_straight_fem}, 
\begin{equation*}
\|\tv_h-v_h\|_{L^2(Q \cap Q^h)}^2
=
\sum_{K \in \CT_h \,,\,K \subset \overline{Q^h}} \|\tv_h-v_h\|_{L^2(K \cap \tK)}^2.
\end{equation*}
We now focus on a single element $K$. Since $\tv_h = v_h \circ \Phi^h$ and 
 $\Phi^h(x) = \Phi_K^h(x)$ when $x\in \tK$,
\begin{align}
\|\tv_h-v_h\|_{L^2(K \cap \tK)}^2
&=
\int_{K \cap \tK} |v_h(\Phi^h(x))-v_h(x)|^2 dx
=
\int_{K \cap \tK} |v_h(\Phi^h_K(x))-v_h(x)|^2 dx.
\label{eq_lastday4}
\end{align}
We now work on the integrand; specifically, by the mean-value inequality,
\begin{align*}
\big|v_h(\Phi^h_K(x))-v_h(x)\big|
&=
\big|v_h(x+(I-\Phi^h_K)(x))-v_h(x)\big|
\\
&\leq
\big|(I-\Phi^k_K)(x)\big|
\|\nabla v_h\|_{L^\infty(K)}
\leq
\|I-\Phi^k_K\|_{L^\infty(K)}
\|\nabla v_h\|_{L^\infty(K)}.
\end{align*}
Using this in \eqref{eq_lastday4} gives 
\begin{equation*}
\|\tv_h-v_h\|^2_{L^2(K \cap \tK)}
\leq
\|I-\Phi^k_K\|_{L^\infty(K)}^2
|K|\|\nabla v_h\|_{L^\infty(K)}^2
\leq
Ch_K^{-2} \|I-\Phi^k_K\|_{L^\infty(K)}^2\|v_h\|_{L^2(K)}^2
\end{equation*}
due to standard equivalence of norms and scaling argument on polynomial spaces recapped in Lemma \ref{lem_scaling} below. %
Summing over $K$ and using the first bound in \eqref{eq_bound_Phi} with $s=0$, we obtain that
\begin{equation}
\label{tmp_transform_L2}
\|\tv_h-v_h\|_{L^2(Q \cap Q^h)}
\leq
C \left (\frac{h}{L}\right )^q  \|v_h\|_{L^2(Q^h)}
\leq                               
C \left (\frac{h}{L}\right )^q  \|\tv_h\|_{L^2(Q)}.
\end{equation}

We argue similarly for the gradient term; since $\tv_h = v_h \circ \Phi^h$ and 
 $\Phi^h(x) = \Phi_K^h(x)$ when $x\in \tK$, by the chain rule (see, e.g., \cite[Equation 9.8a]{ErGu:21}),
\begin{equation*}
\|\nabla(\tv_h-v_h)\|_{L^2(Q \cap Q^h)}^2
=
\sum_{\substack{K \in \CT_h \\ K \subset \overline{Q^h}}}
\int_{K \cap \tK}
\big|(\nabla \Phi^h_K)^T(x)(\nabla v_h)(\Phi^h_K(x))-(\nabla v_h)(x)\big|^2 dx.
\end{equation*}
Then, with $\MI$ the $d\times d$ identity matrix,
\begin{align*}
&\big|(\nabla \Phi^h_K)^T(x)(\nabla v_h)(\Phi^h_K(x))-(\nabla v_h)(x)\big|\\
&\hspace{2cm}\leq
\big|(\MI-\nabla \Phi^h_K)^T(x)(\nabla v_h)(\Phi^h_K(x))\big|
+
\big|(\nabla v_h)(\Phi_h^K(x))-(\nabla v_h)(x)\big|
\\
&\hspace{2cm}\leq
\|\MI -\nabla \Phi^h_K\|_{L^\infty(\tK)} \|\nabla v_h\|_{L^\infty(K)}
+
\|I-\Phi_h^K\|_{L^\infty(\tK)}
\|\nabla^2 v_h\|_{L^\infty(K)},
\end{align*}
by the mean-value inequality. By
the scaling arguments in Lemma \ref{lem_scaling},
\begin{align*}
\|\nabla(\tv_h-v_h)\|_{L^2(K\cap \tK)}
\leq
C
\left (\|\MI -\nabla \Phi^h_K\|_{L^\infty(\tK)}
+
h_K^{-1}\|I-\Phi_h^K\|_{L^\infty(\tK)}
\right )
\|\nabla v_h\|_{L^2(K)}.
\end{align*}
Summing over $K$ and using the first bound in \eqref{eq_bound_Phi} with $s=0$ and $s=1$,
we obtain that
\begin{equation}
\label{tmp_transform_H1}
\|\nabla(\tv_h-v_h)\|_{L^2(Q\cap Q^h)}
\leq
C \left (\frac{h}{L}\right )^q \|\nabla v_h\|_{L^2(Q^h)}
\leq                               
C \left (\frac{h}{L}\right )^q \ \|\nabla \tv_h\|_{L^2(Q)}.
\end{equation}
Combining \eqref{tmp_transform_L2} and \eqref{tmp_transform_H1}, we obtain \eqref{eq_lastday5} and the proof is complete.
\end{proof}

\subsection{Theorem \ref{thm_abs1} applied to Helmholtz problems solved with curved finite elements}\label{sec_main_result_applied}

\begin{theorem}[Preasymptotic Helmholtz $h$-FEM error bound  with curved finite elements]
\label{thm_main_applied}
Suppose that $A,\mu$, and $\data$ satisfy the assumptions in \S\ref{sec_4.1} (in particular, $A$ satisfies Assumption \ref{ass_strong_ellipticity} and $A$, $\mu$, and $\partial\Omega$ satisfy the regularity requirements in Assumption \ref{ass_regularity} for some $\ell\in \mathbb{Z}^+$).
Suppose that $V_h$ satisfies Assumptions \ref{assumption_curved_fem} and \ref{assumption_straight_fem} for some $p\leq \ell$.
Suppose that $kh\leq C_0$ for some $C_0>0$. 

(i) Given $k_0>0$ there exists $C_1, C_2>0$ such that, for all $k\geq k_0$, if
\beq\label{eq_abs_condition1}
\Cstab(kh)^{2p} + (1+\Cstab) \left(\frac{h}{L}\right)^q \leq C_1
\eeq
then the Galerkin solution $u_h$ exists, is unique, and satisfies
\begin{align}\nonumber
&\Big(\sum_{Q\in \Part}\|u-u_h\|_{H^1_k(Q \cap Q^h)}^2\Big)^{1/2}
\\ \nonumber
&\leq C_{2} \bigg[ 
 \Big( 1 + \Cstab (kh)^p\Big) \N{(I- \widetilde{\pi}_h) u}_{H^1_k(\Omega)}
\\
&\hspace{3cm}
 + (1+ \Cstab) \left(\frac{h}{L}\right)^q \bigg(\N{u}_{H^1_k(\Omega)} + \frac{1}{k^2}
 \Big(\sum_{Q \in \Part} \| \data_Q\|^2_{H^1_k(\Rea^d)}\Big)^{1/2}\bigg)
\bigg],
\label{eq_abs_bound}
\end{align}
where $\widetilde{\pi}_h$ is the  orthogonal projection in the $\|\cdot\|_{H^1_k(\Omega)}$ norm in the mapped finite-element space $\widetilde{V}_h$
(which, in particular, satisfies the interpolation bound \eqref{eq_mapped_interpolant}). 

(ii) Suppose, in addition, that 
${\rm supp}\, \data\subset Q$ for some $Q \in \Part$ and that given $k_0>0$ there exists $C>0$ such that
for all $0\leq j\leq p-1$,
\beq\label{eq_f_oscil}
\|\data\|_{H^j(\Omega)}
\leq 
C k^{j+1}
\sup_{\substack{v \in \cH \\ \|v\|_{H^1_k(\Omega)} = 1}} |\langle \data,v\rangle|
\quad\tfa k\geq k_0.
\eeq
Then, given $k_0>0$, there exists $C_3>0$ such that, for all $k\geq k_0$, if \eqref{eq_abs_condition1} holds, then 
\begin{align}\nonumber
&\Big(\sum_{Q\in \Part}\|u-u_h\|_{H^1_k(Q \cap Q^h)}^2\Big)^{1/2}
\leq C_{3} \bigg[ 
 \Big( 1 +  \Cstab(kh)^p\Big) (kh)^p + \Cstab\left(\frac{h}{L}\right)^q
\bigg] \N{u}_{H^1_k(\Omega)}.
\end{align}
\end{theorem}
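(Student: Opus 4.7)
The plan is to obtain part (i) as a direct application of Theorem~\ref{thm_abs1} to the conforming \emph{mapped} problem posed in $\widetilde V_h\subset H^1_{\GD}(\Omega)$, and then to use Lemma~\ref{lem_error_transfer} to transfer the resulting estimate to the non-conforming finite-element solution $u_h$. Part (ii) will then follow by using the oscillation hypothesis \eqref{eq_f_oscil} to express every term appearing in the part (i) bound as a multiple of $\|u\|_{H^1_k(\Omega)}$.

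\textbf{Part (i).} I take $\cH=H^1_{\GD}(\Omega)$ and $b$ as in \eqref{eq_actual_b}, and instantiate the abstract framework of \S\ref{sec:abstract} with $V_h\leftarrow\widetilde V_h$, $b_h\leftarrow\widetilde b_h$ (from \eqref{eq_tildeb}), and $\psi_h\leftarrow\widetilde\psi_h$ (from \eqref{eq_discrete_rhs}). Lemma~\ref{lem_GS} supplies a self-adjoint smoothing operator $\operator$ with $\|\operator\|_{L^2\to L^2}\leq C$ for which the G\aa rding-type inequality \eqref{eq_Garding} holds, while continuity of both sesquilinear forms follows from the $L^\infty$ bounds on $\mu_Q,A_Q$. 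I then plug the quantitative bounds $\Cquad\leq C(h/L)^q$ and $\Cdrhs\|\psi\|_{(H^1_k)^*}\leq C(h/L)^q k^{-1}\bigl(\sum_Q\|\data_Q\|_{H^1_k(\Rea^d)}^2\bigr)^{1/2}$ from Theorem~\ref{thm_gammaq}, $\Caprx\leq C(kh+\Cstab(kh)^p)$ from Theorem~\ref{thm_main_event1}, and $\Caprxop\leq C(kh)^p$ from Theorem~\ref{thm_main_event2} into the constants $A$ and $B$ of Theorem~\ref{thm_abs1}. The smallness condition \eqref{eq_abs_condition1} guarantees that $A$ and $B$ are bounded above and below away from $0$, so Theorem~\ref{thm_abs1} gives existence and uniqueness of the mapped discrete solution $\widetilde u_h$ together with the bound \eqref{eq_abs_bound} but with $\|u-\widetilde u_h\|_{H^1_k(\Omega)}$ in place of the $\sum_Q$ norm on the left. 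Invoking Lemma~\ref{lem_error_transfer} (valid under Assumption~\ref{assumption_straight_fem}) converts this into $\bigl(\sum_Q\|u-u_h\|_{H^1_k(Q\cap Q^h)}^2\bigr)^{1/2}$ at the cost of an additional $C(h/L)^q\|u\|_{H^1_k(\Omega)}$ term, which is absorbed into the existing $(1+\Cstab)(h/L)^q$ contribution.

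\textbf{Part (ii).} Here I use \eqref{eq_f_oscil} to dominate every $k$-dependent quantity in \eqref{eq_abs_bound} by $\|u\|_{H^1_k(\Omega)}$. Continuity of $b$ yields $\|\psi\|_{(H^1_k(\Omega))^*}\leq \Ccont\|u\|_{H^1_k(\Omega)}$, so \eqref{eq_f_oscil} with $j=0,1$ (combined with $\operatorname{supp}\data\subset Q$) gives $\|\data\|_{L^2(\Omega)}\leq Ck\|u\|_{H^1_k(\Omega)}$ and $\|\nabla\data\|_{L^2(\Omega)}\leq Ck^2\|u\|_{H^1_k(\Omega)}$, whence $k^{-2}\bigl(\sum_{Q}\|\data_Q\|_{H^1_k(\Rea^d)}^2\bigr)^{1/2}\leq C\|u\|_{H^1_k(\Omega)}$ and the data term in \eqref{eq_abs_bound} collapses to $C(1+\Cstab)(h/L)^q\|u\|_{H^1_k(\Omega)}$. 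For the approximation term, combining \eqref{eq_mapped_interpolant} with a piecewise elliptic-regularity iteration in the spirit of the proof of Lemma~\ref{lem_GS}, but now applied to the PDE $-k^2\mu u-\nabla\cdot(A\nabla u)=\data$ directly, produces the bound $|u|_{H^{m+2}(\Part)}\leq C\bigl(k^2\|u\|_{H^m(\Part)}+\|\data\|_{H^m(\Part)}+\|u\|_{H^1(\Omega)}\bigr)$ for $0\leq m\leq p-1$. Multiplying by $k^{-m-1}$, inserting \eqref{eq_f_oscil}, and iterating in $m$ using the weighted norms \eqref{eq_weighted}--\eqref{eq_weighted2} gives $\|u\|_{H^{p+1}_k(\Part)}\leq C\|u\|_{H^1_k(\Omega)}$; feeding this back into \eqref{eq_mapped_interpolant} yields $\|(I-\widetilde\pi_h)u\|_{H^1_k(\Omega)}\leq C(kh)^p\|u\|_{H^1_k(\Omega)}$ (using $kh\leq C_0$ to discard a harmless $L^2$ contribution). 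Substituting both estimates into the bound from (i), and using that $1\leq C\Cstab$ for $k\geq k_0$ to absorb the solitary $(h/L)^q$ factor into $\Cstab(h/L)^q$, yields the claimed bound.

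\textbf{Main obstacle.} The key technical step is the regularity estimate $\|u\|_{H^{p+1}_k(\Part)}\leq C\|u\|_{H^1_k(\Omega)}$ of Step~2: each round of elliptic regularity on a piece $Q$ generates a $k^2\|u\|_{H^m(\Part)}$ term that must be rebalanced against \eqref{eq_f_oscil}, and the weights in \eqref{eq_weighted}--\eqref{eq_weighted2} must be tracked carefully through the iteration so that neither the inductive hypothesis $\|u\|_{H^{m+1}_k(\Part)}\leq C\|u\|_{H^1_k(\Omega)}$ nor the data bound $k^{-m-1}\|\data\|_{H^m(\Part)}\leq C\|u\|_{H^1_k(\Omega)}$ leaks a spurious power of $k$ into the top-order term.
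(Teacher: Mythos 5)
Your proposal is correct and follows essentially the same route as the paper: part (i) applies Theorem \ref{thm_abs1} in the mapped space $\widetilde V_h$ with the operator from Lemma \ref{lem_GS} and the bounds of Theorems \ref{thm_gammaq}, \ref{thm_main_event1} and \ref{thm_main_event2}, then transfers the error via Lemma \ref{lem_error_transfer}, and part (ii) bounds the data term by \eqref{eq_f_oscil} and duality and the best-approximation term by \eqref{eq_mapped_interpolant} plus the elliptic-regularity iteration showing $\|u\|_{H^{p+1}(\Part)}\leq Ck^p\|u\|_{H^1_k(\Omega)}$, exactly as in the paper. One minor slip: converting the data term of Theorem \ref{thm_gammaq} using $kh\leq C_0$ yields $k^{-2}\bigl(\sum_{Q}\|\data_Q\|^2_{H^1_k(\Rea^d)}\bigr)^{1/2}$, not the $k^{-1}$ factor written in your part (i), but this is harmless since your part (ii) already works with the correct $k^{-2}$ form.
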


\bpf
(i) Under the assumptions of the theorem, Theorems \ref{thm_gammaq} and \ref{thm_main_event1} both hold.
Applying Theorem \ref{thm_abs1} to the variational problem \eqref{eq_lastday1} (i.e., in the mapped space $\widetilde{V}_h$)
and inputting the bounds on $\Cquad, \Cdrhs, \Caprx,$ and $\Caprxop$ from Theorems \ref{thm_gammaq} and \ref{thm_main_event1}, we obtain,
for some $C>0$ depending on $A$ and $\mu$,
\begin{align}\nonumber
&\N{u-\widetilde{u}_h}_{H^1_k(\Omega)}\\ \nonumber
&\leq C \big(1 + \Cstab (kh)^p\big) \N{(I- \widetilde{\pi}_h) u}_{H^1_k(\Omega)} \\
&\quad+ C (1+ \Cstab)
\bigg(\left (\frac{h}{L}\right )^q \N{u}_{H^1_k(\Omega)}
+
\left (\frac{h}{L}\right )^q
\frac{1}{k}
\bigg (
\sum_{Q \in \Part}
\|\data_Q\|_{L^2(\mathbb R^d)}^2
+
h^2\|\nabla \data_Q\|_{L^2(\mathbb R^d)}^2
\bigg )^{1/2}
\bigg)\label{eq_abs_bound2}
\end{align}
(compare to \eqref{eq_simple2}). The bound \eqref{eq_transform_error} 
allows us to replace 
$\N{u-\widetilde{u}_h}_{H^1_k(\Omega)}$
on the left-hand side of \eqref{eq_abs_bound2} with $\sum_{Q\in \Part} \N{u-u_h}_{H^1(Q\cup Q^h)}$. The result \eqref{eq_abs_bound} then follows by using $h\leq C_0/k$ in the terms involving $\data_Q$. 

(ii)
The result 
follows from Part (i) if we can show that
\begin{itemize}
\item[(a)] given $k_0>0$ there exists $C>0$ such that, for all $k\geq k_0$, if $kh\leq C_0$, 
\beq\label{eq_GWR0}
\N{(I- \widetilde{\pi}_h) u}_{H^1_k(\Omega)}\leq C (kh)^p\N{u}_{H^1_k(\Omega)},
\eeq
\item[(b)] if $g$ satisfies \eqref{eq_f_oscil}, then 
\beq\label{eq_GWR2}
\frac{1}{k^2}\Big(\sum_{Q \in \Part} \| \data_Q\|_{H^1_k(\Rea^d)}\Big)^{1/2}\leq C\N{u}_{H^1_k(\Omega)}.
\eeq
\end{itemize}
For (a), by \eqref{eq_mapped_interpolant},
\beq\label{eq_GWR1}
\N{(I- \widetilde{\pi}_h) u}_{H^1_k(\Omega)}\leq C (1+kh) h^p \|u\|_{H^{p+1}(\Part)}.
\eeq
If $g$ satisfies \eqref{eq_f_oscil}, then
by elliptic regularity (using the regularity assumptions on $A$, $\mu$, and $\partial\Omega$ in Assumption \ref{ass_regularity}),
\beq\label{eq_u_oscil}
\N{u}_{H^{p+1}(\Part)}\leq C k^{p} \N{u}_{H^1_k(\Omega)} \quad\tfa k\geq k_0;
\eeq
(i.e., the data being $k$-oscillatory implies that the solution is $k$-oscillatory); the proof is very similar to the uses of elliptic regularity in the proof of Lemma \ref{lem_GS}; for the details see \cite[end of the proof of Theorem 1.5]{GaSp:23}. Combining \eqref{eq_GWR1} and \eqref{eq_u_oscil}, we obtain that
\beqs
\N{(I- \widetilde{\pi}_h) u}_{H^1_k(\Omega)}\leq C (1+kh) (kh)^p\N{u}_{H^1_k(\Omega)},
\eeqs
and \eqref{eq_GWR0} follows since $kh\leq C_0$.

For (b), by 
\eqref{eq_f_oscil}  
applied with $j=1$ 
and the fact that ${\rm supp }\data\subset Q$ for some $Q\in \Part$,
\beq\label{eq_GWR3}
\frac{1}{k^2}\Big(\sum_{Q \in \Part} \| \data_Q\|_{H^1_k(\Rea^d)}\Big)^{1/2}
= k^{-2} \|\data\|_{H^1_k(\Omega)}
\leq C\N{\data}_{(H^1_k(\Omega))^*}.
\eeq
By continuity of $b$ \eqref{eq_definition_Ccont} and \eqref{eq:RHS},
\beq\label{eq_GWR4}
\N{\data}_{(H^1_k(\Omega))^*}
 \leq \Ccont \N{u}_{H^1_k(\Omega)},
\eeq
and \eqref{eq_GWR2} follows from combining \eqref{eq_GWR3} and \eqref{eq_GWR4}.
\epf

\begin{remark}[Obtaining \InformalTheorem \ref{thm:informal1} from Part (ii) of Theorem \ref{thm_main_applied}]
For the radial PML originally introduced in \cite{collino_monk_1998a}, the coefficient $A$ satisfies 
Assumption \ref{ass_strong_ellipticity} by, e.g., 
\cite[Lemma 2.3]{GLSW1}. 
\InformalTheorem \ref{thm:informal1} therefore follows from Part (ii) of Theorem \ref{thm_main_applied} if the data satisfies \eqref{eq_f_oscil} and is such that ${\rm supp}\, \data \subset Q$ for some $Q\in \Part$. 

As described in Definitions \ref{def_PML_approx_ss} and  \ref{def_PML_approx_pen}, 
the solution of the plane-wave scattering problem can be approximated using a PML by solving for the unknown 
$u:=\chi u_{\rm inc} +u_{\rm scat}$, with $\chi \in C^\infty_{\rm comp}(\Rea^d)$ equal one on the scatterer,
and corresponding data $\data=\datai$ with $\datai$ defined by \eqref{eq_new_rhs}. 
By construction, the PML must be away from the support of $\chi$; thus
$\nabla \chi$, and hence also $\data$, are supported where $A=I$ and $\mu=1$, and thus on a single $Q\in \Part$.
\end{remark}

\begin{appendix}

\section{Appendix}
\subsection{Scaling arguments and inverse estimates}
\label{section_scaling_argument}

In this section, we consider a mesh $\CT_h$
satisfying the requirements of Assumption \ref{assumption_curved_fem}.
Recall that, in this case, the elements $K \in \CT_h$ are obtained by
mapping a single reference simplex $\hK$ through bilipschitz maps $\MAP_K: \hK \to K$.
To simplify the notation, we let $\IMAP_K \eq \MAP_K^{-1}$. 
By \eqref{eq_mapK}, 
\begin{subequations}
\label{eq_determinant}
\begin{equation}
\|\det \nabla \MAP_K\|_{L^\infty(K)}
\leq
C\|\nabla \MAP_K\|_{L^\infty(K)}^d
\leq
C h_K^d,
\end{equation}
and
\begin{equation}
\|\det \nabla \IMAP_K\|_{L^\infty(\hK)}
\leq
C\|\nabla \IMAP_K\|_{L^\infty(\hK)}^d
\leq
C h_K^{-d}.
\end{equation}
\end{subequations}

\begin{lemma}\label{lem_scaling}
For all $v_h \in V_h$ and $K \in \CT_h$,
\begin{equation*}
|K| \|\nabla v_h\|_{L^\infty(K)}^2 \leq C h_K^{-2} \|v_h\|_{L^2(K)}^2,
\quad
|K| \|\nabla^2 v_h\|_{L^\infty(K)}^2 \leq C h_K^{-2} \|\nabla v_h\|_{L^2(K)}^2.
\end{equation*}
\end{lemma}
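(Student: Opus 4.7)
The plan is a standard scaling argument: pull back to the reference simplex $\hK$, invoke norm-equivalence on the finite-dimensional space $\CP_p(\hK)$, and push the estimate back to $K$ using the bounds on $\MAP_K$ and $\IMAP_K=\MAP_K^{-1}$ supplied by \eqref{eq_mapK} and \eqref{eq_determinant}. Write $\hat v_h\eq v_h\circ\MAP_K\in\CP_p(\hK)$, so that $v_h=\hat v_h\circ\IMAP_K$.

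\textbf{Step 1: norm-equivalence on the reference element.} Since $\CP_p(\hK)$ is finite-dimensional and $\|\nabla\cdot\|_{L^\infty(\hK)}$ and $\|\nabla^2\cdot\|_{L^\infty(\hK)}$ vanish on constants, the equivalence of norms on the quotient $\CP_p(\hK)/\CP_0(\hK)$ yields
\begin{equation*}
\|\nabla\hat v_h\|_{L^\infty(\hK)}\leq C\,\|\hat v_h\|_{L^2(\hK)},
\qquad
\|\nabla\hat v_h\|_{L^\infty(\hK)}+\|\nabla^2\hat v_h\|_{L^\infty(\hK)}\leq C\,\|\nabla\hat v_h\|_{L^2(\hK)},
\end{equation*}
where for the first inequality I would first subtract the mean (which does not change $\nabla\hat v_h$) and then apply equivalence on $\CP_p/\CP_0$.

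\textbf{Step 2: transfer back to $K$ via the chain rule.} For the first estimate, $(\nabla v_h)(x)=(\nabla\IMAP_K)^T(x)(\nabla\hat v_h)(\IMAP_K(x))$ together with \eqref{eq_mapK} and a change of variables using \eqref{eq_determinant} gives
\begin{equation*}
\|\nabla v_h\|_{L^\infty(K)}\leq Ch_K^{-1}\|\nabla\hat v_h\|_{L^\infty(\hK)}\leq Ch_K^{-1}\|\hat v_h\|_{L^2(\hK)}\leq Ch_K^{-1}h_K^{-d/2}\|v_h\|_{L^2(K)}.
\end{equation*}
Squaring and multiplying by $|K|\leq Ch_K^d$ yields the first inequality. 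For the second estimate, applying the chain rule twice produces the two terms
\begin{equation*}
\|\nabla^2 v_h\|_{L^\infty(K)}\leq C\Big(\|\nabla\IMAP_K\|_{L^\infty(K)}^2\|\nabla^2\hat v_h\|_{L^\infty(\hK)}+\|\nabla^2\IMAP_K\|_{L^\infty(K)}\|\nabla\hat v_h\|_{L^\infty(\hK)}\Big),
\end{equation*}
both of which are bounded by $Ch_K^{-2}\|\nabla\hat v_h\|_{L^2(\hK)}$ by Step 1 and \eqref{eq_mapK}. Changing variables via $\|\nabla\hat v_h\|_{L^2(\hK)}^2\leq C h_K^{2-d}\|\nabla v_h\|_{L^2(K)}^2$ (using the Jacobian bound on $\nabla\MAP_K$ and $\det\nabla\IMAP_K$) and multiplying by $|K|\leq Ch_K^d$ yields the second inequality.

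\textbf{Main obstacle.} The only subtle point is the second-derivative estimate: the chain rule produces a lower-order term multiplied by $\|\nabla^2\IMAP_K\|_{L^\infty(K)}$, which by \eqref{eq_mapK} is $O(h_K^{-2})$ rather than the naively expected $O(h_K^{-1})$ squared; absorbing this term forces one to bound $\|\nabla\hat v_h\|_{L^\infty(\hK)}$ by $\|\nabla\hat v_h\|_{L^2(\hK)}$ (not by $\|\hat v_h\|_{L^2(\hK)}$), which is why Step 1 is set on the quotient $\CP_p/\CP_0$. Once that is noticed, the rest is bookkeeping of powers of $h_K$.
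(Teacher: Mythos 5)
Your proposal is correct and follows essentially the same route as the paper's proof: pull back to the reference simplex, apply the chain rule with the bounds \eqref{eq_mapK} on $\MAP_K$ and $\IMAP_K$, invoke equivalence of (semi)norms on the finite-dimensional space $\CP_p(\hK)$ — in particular bounding the lower-order term from the second derivative by $\|\nabla\hat v_h\|_{L^2(\hK)}$ rather than $\|\hat v_h\|_{L^2(\hK)}$, which is exactly the seminorm equivalence the paper uses — and then change variables back and multiply by $|K|\leq C h_K^d$. Your explicit quotient-space phrasing of the seminorm equivalence is just a slightly more detailed justification of the same step, so there is nothing further to add.
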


\begin{proof}
By \eqref{eq_determinant}, 
\begin{equation}\label{eq_measureK}
|K|
=
\int_K 1 dx
=
\int_{\hK} (\det \nabla \MAP_K)(\hx) d\hx
\leq
|\hK| \|\det \nabla \MAP_K\|_{L^\infty(\hK)}
\leq
C h_K^d.
\end{equation}
Let $\hv \in \CP_p(\hK)$ and set $v \eq \hv \circ \MAP_K^{-1}
= \hv \circ \IMAP_K$.
By the chain rule, 
\begin{equation*}
\partial_j v
=
\partial_j \IMAP_K^\ell (\partial_\ell \hv) \circ \IMAP_K
\end{equation*}
and a second application gives
\begin{equation*}
\partial_{j\ell}^2 v
=
\partial_{j\ell}^2 \IMAP_K^\ell (\partial_\ell \hv) \circ \IMAP_K
+
\partial_j \IMAP_K^\ell \partial_\ell \IMAP_K^m (\partial_{\ell m}^2 \hv) \circ \IMAP_K.
\end{equation*}
It then follows from \eqref{eq_mapK}
that
\begin{equation*}
\|\nabla v\|_{L^\infty(K)}
\leq
C h_K^{-1} \|\nabla \hv\|_{L^\infty(\hK)}
\quad\tand\quad
\|\nabla^2 v\|_{L^\infty(K)}
\leq
C h_K^{-2} \left (
\|\nabla \hv\|_{L^\infty(\hK)}
+
\|\nabla^2 \hv\|_{L^\infty(\hK)}
\right ).
\end{equation*}
Since $\CP_p(\hK)$ is a finite dimensional space,
equivalence of semi-norms gives that
\begin{equation*}
\|\nabla \hv\|_{L^\infty(\hK)}
\leq
C \|\hv\|_{L^2(\hK)}
\quad\tand\quad
\|\nabla \hv\|_{L^\infty(\hK)} + \|\nabla^2 \hv\|_{L^\infty(\hK)}
\leq
C \|\nabla \hv\|_{L^2(\hK)},
\end{equation*}
leading to
\begin{equation}\label{eq_Friday1}
\|\nabla v\|_{L^\infty(K)}
\leq
C h_K^{-1} \|\hv\|_{L^2(\hK)}
\quad\tand\quad
\|\nabla^2 v\|_{L^\infty(K)}
\leq
C h_K^{-2} \|\nabla \hv\|_{L^2(\hK)}.
\end{equation}

By the chain rule in the other direction,
\begin{equation*}
|\nabla \hv|^2
\leq
C h_K^2 |\nabla v \circ \MAP_K|^2,
\end{equation*}
and, by the change-of-variable formula,
\begin{equation}\label{eq_Friday2}
\|\hv\|_{L^2(\hK)}^2 \leq C h_K^{-d} \|v\|_{L^2(K)}^2
\quad\tand\quad
\|\nabla \hv\|_{L^2(\hK)}^2 \leq C h_K^{-d} h_K^2 \|\nabla v\|_{L^2(K)}^2,
\end{equation}
where we again used \eqref{eq_determinant}; combining \eqref{eq_Friday1}, \eqref{eq_Friday2}, and \eqref{eq_measureK} concludes the proof.
\end{proof}


\subsection{Composition with a map close to the identity}
\label{section_change_variable}

The following result is used to estimate the geometric error on the right-hand side (in Theorem \ref{thm_gammaq}).

\begin{lemma}\label{lemma_change_variable}
Let $U \subset \mathbb R^d$ 
be a measurable set and 
$\phi: U \to \phi(U)$  a bilipschitz mapping 
such that $\|I-\phi\|_{L^\infty(U)} \leq \varepsilon$ and
$\|\MI -\nabla \phi\|_{L^\infty(U)} \leq \varepsilon' < 1$.
Let $U_\varepsilon := \{ x \in \mathbb R^d  :  \operatorname{dist}(x,U) < \varepsilon$ \}
and $f \in H^1(U_\varepsilon)$. Then
\begin{equation*}
\|f-f\circ\phi\|_{L^2(U)}
\leq
\frac{\varepsilon}{1-\varepsilon'}\|\nabla f\|_{L^2(U_\varepsilon)}.
\end{equation*}
\end{lemma}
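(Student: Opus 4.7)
The strategy is the classical one: apply the fundamental theorem of calculus along the straight-line interpolation between $x$ and $\phi(x)$, then change variables to recast the integral in terms of $|\nabla f|^2$ on $U_\varepsilon$. By a standard density argument I first reduce to $f \in C^1(U_\varepsilon)$, noting that the inequality is preserved under $L^2$-limits and that $f \circ \phi \in L^2(U)$ is well-defined because $\phi$ is bilipschitz (hence satisfies Lusin's condition~$N$) and $\phi(U) \subset U_\varepsilon$.

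For $x \in U$ and $t \in [0,1]$, set $\psi_t(x) := x + t(\phi(x)-x)$, so that $\psi_0 = \mathrm{id}$ and $\psi_1 = \phi$. The fundamental theorem of calculus applied to $t \mapsto f(\psi_t(x))$, together with Cauchy--Schwarz in $t$ and $|\phi(x)-x| \leq \varepsilon$, yields
\begin{equation*}
|f(\phi(x))-f(x)|^2 \leq \varepsilon^2 \int_0^1 |\nabla f(\psi_t(x))|^2 \, dt.
\end{equation*}
Integrating over $U$ and applying Fubini gives
\begin{equation*}
\|f - f\circ\phi\|_{L^2(U)}^2 \leq \varepsilon^2 \int_0^1 \int_U |\nabla f(\psi_t(x))|^2 \, dx\, dt.
\end{equation*}

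For each fixed $t$ I change variables via $y = \psi_t(x)$. Since $\nabla\psi_t = \MI + t(\nabla\phi - \MI)$, one has $\|\MI - \nabla\psi_t\|_{L^\infty(U)} \leq t\varepsilon' < 1$, so every singular value of $\nabla\psi_t$ lies in $[1-t\varepsilon', 1+t\varepsilon']$ and in particular $|\det\nabla\psi_t| \geq (1-t\varepsilon')^d$. Moreover $\psi_t(U) \subset U_\varepsilon$ since $|\psi_t(x)-x| \leq t\varepsilon$, and $\psi_t$ is globally injective on $U$ because of the lower Lipschitz bound $|\psi_t(x)-\psi_t(y)| \geq (1-t\varepsilon')|x-y|$. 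The area formula then gives
\begin{equation*}
\int_U |\nabla f(\psi_t(x))|^2 \, dx \leq (1-t\varepsilon')^{-d} \|\nabla f\|_{L^2(U_\varepsilon)}^2,
\end{equation*}
and an elementary computation (using $\int_0^1 (1-t\varepsilon')^{-d}\,dt = \tfrac{1}{(d-1)\varepsilon'}[(1-\varepsilon')^{1-d}-1]$ for $d \geq 2$) shows $\int_0^1 (1-t\varepsilon')^{-d}\,dt \leq (1-\varepsilon')^{-2}$ for $d \in \{1,2,3\}$, producing the stated constant $\varepsilon/(1-\varepsilon')$.

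The main subtlety lies in the change-of-variables step, specifically justifying the lower Jacobian bound $|\det\nabla\psi_t| \geq (1-t\varepsilon')^d$ (a routine singular-value argument) and global injectivity of $\psi_t$ on a merely measurable $U$. On a non-convex $U$ the mean value inequality cannot be applied directly along chords; this is handled either by working on each connected component of the interior of $U$, or by exploiting that the bilipschitz property of $\phi$ forces $\psi_t$ to be bilipschitz with controlled constants, which suffices to apply the area formula without multiplicity.
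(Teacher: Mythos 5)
Your proposal follows essentially the same route as the paper's proof: differentiate along the segment between $x$ and $\phi(x)$, apply Cauchy--Schwarz and Fubini, then change variables for each fixed $t$. Where you differ is in the bookkeeping of the Jacobian, and there you are in fact more careful than the paper: the paper only records the gradient bound $|\nabla(\phi_t^{-1})|\le (1-\varepsilon')^{-1}$ and then treats the change-of-variables factor as if it were controlled by $(1-\varepsilon')^{-1}$ uniformly in $d$, whereas the correct determinant bound is $(1-t\varepsilon')^{-d}$; your trick of integrating this in $t$ recovers the stated constant $\varepsilon/(1-\varepsilon')$ exactly for $d\le 3$, and some dimension-dependence is genuinely unavoidable for larger $d$, so your restriction is inherent to the statement rather than a defect of your method, and it is harmless here since the paper only needs $d=2,3$ and only uses the lemma up to a generic constant. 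The one point you flag---global injectivity of the interpolated map $\psi_t$ on a merely measurable, possibly non-convex $U$---is a real subtlety, but your proposed remedies do not quite close it: passing to connected components does not yield convexity, and $\psi_t$ need not inherit injectivity from $\phi$ (two nearby components of $U$ translated past one another give a bilipschitz $\phi$ with $\varepsilon'=0$ for which an intermediate $\psi_t$ is not injective, so without some additional structure one only gets the estimate with a multiplicity factor). The paper's own proof makes the identical unproven assertion that $\phi_t^{-1}$ exists, and in the application the map is an $O((h/L)^q)$ perturbation of the identity on each element, where injectivity of the interpolants can be verified directly; so, measured against the paper's proof, your argument is the same in substance and, on the Jacobian step, more precise.
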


\begin{proof}
Let  $\phi_t(x) = x + t(\phi(x)-x)$ and $g_x(t) = f(\phi_t(x))$ $t \in [0,1]$.
Then
\begin{equation*}
f(\phi(x))-f(x)
=
\int_0^1 g_x'(t) dt
=
\int_0^1 (\phi(x)-x) \cdot (\nabla f) (\phi_t(x)) dt,
\end{equation*}
and
\begin{align*}
\int_U |f(\phi(x))-f(x)|^2 dx
&=
\int_U
\left |\int_0^1 (\phi(x)-x) \cdot (\nabla f) (\phi_t(x)) dt\right |^2
dx\\
&\leq
\int_0^1
\int_U \big|(\phi(x)-x) \cdot (\nabla f) (\phi_t(x))\big|^2 dx
dt,
\end{align*}
where we have used the Cauchy-Schwarz inequality and Fubini's theorem.

Since $\nabla \phi_t = \MI  + t(\nabla \phi-\MI )$, we have
$|\nabla \phi_t(x)| \geq 1 - t\varepsilon' \geq 1-\varepsilon'$.
It follows that $\phi_t^{-1}$ exists for all $t \in [0,1]$ with
\begin{equation}
\label{tmp_inverse_gradient}
|\nabla (\phi_t^{-1})(y)| \leq \frac{1}{1-\varepsilon'}
\qquad
\tfa y \in \phi_t(U).
\end{equation}
Hence, for a fixed $t \in [0,1]$,
by \eqref{tmp_inverse_gradient} and the fact that
$\phi_t(U) \subset U_\varepsilon$ for all $t \in [0,1]$,
\begin{align*}
\int_U \big|(\phi(x)-x) \cdot (\nabla f) (\phi_t(x))\big|^2 dx
&\leq
\|\phi-I\|_{L^\infty(U)}^2
\int_U |(\nabla f)(\phi_t(x))|^2 dx
\\
&=
\|\phi-I\|_{L^\infty(U)}^2
\int_{\phi_t(U)} \big|\det \nabla (\phi_t^{-1})(y)|^2|(\nabla f)(y)\big|^2 dy
\\
&\leq
\left (\frac{\varepsilon}{1-\varepsilon'}\right )^2 \int_{U_\varepsilon} |(\nabla f)(y)|^2 dy,
\end{align*}
and the result follows.
\end{proof}
\end{appendix}

\section*{Acknowledgements}
For useful discussions and comments, the authors thank Jeffrey Galkowski (University College London) and the anonymous referees.

\bibliographystyle{IMANUM-BIB}
\bibliography{biblio}

\end{document}